\DeclareMathSymbol{\rightrightarrows}  {\mathrel}{AMSa}{"13}
\def\Ob{\operatorname{Ob}}
\def\ho{\operatorname{ho}}
\def\sk{\operatorname{sk}}
\def\Iso{\operatorname{Iso}}
\def\Path{\operatorname{Path}}
\def\varholim@#1#2{\mathop{\vtop{\ialign{##\crcr
 \hfil$#1\m@th\operator@font holim$\hfil\crcr
 \noalign{\nointerlineskip\kern\ex@}#2#1\crcr
 \noalign{\nointerlineskip\kern-\ex@}\crcr}}}}
\def\hocolim{\mathpalette\varholim@\rightarrowfill@} 
\def\hoinvlim{\mathpalette\varholim@\leftarrowfill@}
\newtheorem{theorem}{Theorem}
\newtheorem{lemma}[theorem]{Lemma}
\newtheorem{corollary}[theorem]{Corollary}
\newtheorem{proposition}[theorem]{Proposition}
\theoremstyle{definition}
\newtheorem{example}[theorem]{Example}
\newtheorem{remark}[theorem]{Remark}
\begin{document}

\title{Path categories and quasi-categories}

\author{J.F. Jardine\thanks{This research was supported by NSERC.}}


\maketitle

\begin{abstract}
This paper gives an introduction to the
homotopy theory of quasi-categories. 
Weak equivalences between quasi-categories are characterized as maps which
induce equivalences on a naturally defined system of groupoids. These
groupoids effectively replace higher homotopy groups in quasi-category
homotopy theory. 
\end{abstract}

\section*{Introduction}

The identification of morphism sets in path categories of simplicial
(or cubical) complexes is a central theme of concurrency theory. The
path category functor, on the other hand, plays a role in the homotopy
theory of quasi-categories that is roughly analogous to that of the
fundamental groupoid in standard homotopy theory.

Concurrency theory is the study of models of parallel processing
systems. One of the prevailing geometric forms of the theory
represents systems as finite cubical complexes, called higher
dimensional automata. Each $r$-cell of the complex $K$ represents the
simultaneous action of a group of $r$ processors, while the holes in
the complex represent constraints on the system, such as areas of
memory that cannot be shared.  The vertices of $K$ are the states of
the system, and the morphisms $P(K)(x,y)$ from $x$ to $y$ in the path
category $P(K)$ are the execution paths from the state $x$ to the
state $y$.

There is no prevailing view of what higher homotopy invariants should
mean in concurrency theory, or even what these invariants should
be. The path category functor is not an artifact of standard homotopy
theory, but it is a central feature of the theory of
quasi-categories. The homotopy theory of quasi-categories is
constructed abstractly within the category of simplicial sets by
methods that originated in homotopy localization theory, and its weak
equivalences are not described by homotopy groups.

Is the homotopy theory of quasi-categories the right venue for the
study of higher homotopy phenomena in concurrency theory? Well maybe, but
a definitive answer is not presented here. 

It is a fundamental aspect of this flavour of homotopy theory that if
a map $X \to Y$ of simplicial sets is a quasi-category weak
equivalence (a ``categorical weak equivalence'' below), then the
induced functor $P(X) \to P(Y)$ is an equivalence of categories ---
see Lemma \ref{lem 28}. This phenomenon may be a bit strong for
computational purposes, given that some complexity reduction
techniques for concurrency algorithms \cite{complexity} focus on producing subcomplexes
$L \subset K$ of a simplicial complex $K$ which induce fully faithful
imbeddings $P(L) \to P(K)$ of the associated path categories.

It nevertheless appears that the homotopy theory of quasi-categories
is a good first approximation of a theory that is suitable for the
homotopy theoretic analysis of the computing models that are
represented by higher dimensional automata. 

Further, recent work of Nicholas Meadows \cite{Meadows-01} shows that
the theory of quasi-categories can be extended to a homotopy theory for
simplicial presheaves whose weak equivalences are those maps which are
stalkwise quasi-category weak equivalences. Meadows' theory starts to give
a local to global picture of systems that are infinite in the
practical sense that they cannot be studied by ordinary algorithms.
\medskip              

This paper is esssentially expository: it is an introduction to the
``ordinary'' homotopy theory of quasi-categories. The exposition
presented here has evolved from research notes that were written at a
time when complete descriptions of Joyal's quasi-category model
structure for simplicial sets were not publicly available. The overall
line of argument is based on methods which first appeared Cisinski's
thesis \cite{Cisinski}, \cite{J40}, and is now standard. This
presentation is rapid, and is at times aggressively
combinatorial. See also \cite{Joyal-quasi-Kan} and \cite{Lurie-HTT}.

Proposition \ref{prop 38} characterizes
weak equivalences between quasi-categories as maps which
induce equivalences on a naturally defined system of groupoids. These
groupoids effectively replace higher homotopy groups in quasi-category
homotopy theory. 

The collection of ideas appearing in the argument for Proposition
\ref{prop 38} also applies within the ordinary homotopy theory of
simplicial sets. Corollary \ref{cor 39} says that there is a system of
naturally defined groupoids for Kan complexes, such that a map of Kan
complexes is a standard weak equivalence if and only if it induces
equivalences of this list of associated groupoids. This criterion,
along with the argument for it, is base point free. Corollary 
\ref{cor 39} can also be proved directly with
standard techniques of simplicial homotopy theory.
\medskip

\noindent
{\bf NB}:\ This is a lightly revised version of an internal document that was written in 2015, in support of 
the Meadows paper \cite{Meadows-01} and the thesis \cite{Meadows-thesis} that followed. This paper will not be published in its present form.

\vfill\eject

\tableofcontents

\section{The path category functor}

Suppose that $X$ is a simplicial set. The {\it path category} $P(X)$
of $X$ is the category freely that is generated by the graph defined
by the vertices and edges of $X$, subject to the relations
\begin{equation*}
d_{1}(\sigma) = d_{0}(\sigma)\cdot d_{2}(\sigma),
\end{equation*}
one for each $2$-simplex $\sigma$ of $X$, and $s_{0}(x) = 1_{x}$ for
each vertex $x$ of $X$.

The category $P(X)$ has morphism sets $P(X)(x,y)$ defined by
equivalence classes of strings of $1$-simplices
\begin{equation*}
\alpha:\ x = x_{0} \xrightarrow{\alpha_{1}} x_{1} \xrightarrow{\alpha_{2}} \dots
\xrightarrow{\alpha_{n}} x_{n} = y,
\end{equation*}
where the equivalence relation is generated by relations of the form $\alpha
\sim \alpha'$ occurring in the presence of a $2$-simplex $\sigma$ of $X$ with
boundary
\begin{equation*}
\xymatrix@C=8pt{
x_{i} \ar[rr]^{\beta} \ar[dr]_{\alpha_{i+1}} && x_{i+2} \\
& x_{i+1} \ar[ur]_{\alpha_{i+2}}
}
\end{equation*}
and where $\alpha'$ is the string
\begin{equation*}
\alpha':\ x_{0} \xrightarrow{\alpha_{1}} \dots \xrightarrow{\alpha_{i}} 
x_{i} \xrightarrow{\beta} x_{i+2} \xrightarrow{\alpha_{i+3}} \dots \xrightarrow{\alpha_{n}} x_{n}.
\end{equation*}
Composition in $P(X)$ is defined by concatenation of representing strings.

The resulting functor $X
\mapsto P(X)$ is left adjoint to the nerve functor
\begin{equation*}
B: \mathbf{cat} \to s\mathbf{Set},
\end{equation*}
essentially since the nerve functor takes values in
$2$-coskeleta. The inclusion $\sk_{2}(X) \subset X$ induces
an isomorphism of categories
\begin{equation*}
P(\sk_{2}(X)) \cong P(X).
\end{equation*}

Given a simplicial set map $f: X \to BC$, the adjoint functor
$f_{\ast}: P(X) \to C$ is the map $f: X_{0} \to \Ob(C)$ in degree $0$,
and that takes a $1$-simplex $\alpha: d_{1}(\alpha) \to d_{0}(\alpha)$ to
the morphism $f(\alpha): f(d_{1}(\alpha)) \to f(d_{0}(\alpha))$ of
$C$. In particular, the canonical functor $\epsilon: P(BC) \to C$ is
the identity on objects, and takes a $1$-simplex $\alpha: x \to y$ of
$BC$ to the corresponding morphism of $C$.

\begin{lemma}\label{lem 1}
The canonical functor 
\begin{equation*}
\epsilon: P(BC) \to C
\end{equation*}
is an isomorphism for each small category $C$.
\end{lemma}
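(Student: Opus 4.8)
The plan is to show that $\epsilon$ is the identity on objects and a bijection on every morphism set; such a functor is an isomorphism of categories. The statement about objects is immediate from the description of $\epsilon$ recalled above: the objects of $P(BC)$ are the vertices of $BC$, which are the objects of $C$, and $\epsilon$ is the identity there. So the real content is that $\epsilon$ induces a bijection $P(BC)(x,y) \to C(x,y)$ for all objects $x,y$ of $C$.

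First I would establish a normal form: every morphism of $P(BC)(x,y)$ is represented by a string of length at most one. Given a string $\alpha_{1},\dots,\alpha_{n}$ of composable $1$-simplices of $BC$, i.e. composable arrows of $C$, the composable pair $(\alpha_{1},\alpha_{2})$ is the boundary of a $2$-simplex $\sigma$ of $BC$ with $d_{2}(\sigma)=\alpha_{1}$, $d_{0}(\sigma)=\alpha_{2}$ and $d_{1}(\sigma)=\alpha_{2}\alpha_{1}$ (the composite in $C$), so the defining relation for $P(BC)$, applied at position $i=0$, identifies the string with the shorter string $\alpha_{2}\alpha_{1},\alpha_{3},\dots,\alpha_{n}$. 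Iterating $n-1$ times shows that $\alpha_{1},\dots,\alpha_{n}$ is equivalent to the length-one string given by the composite $\alpha_{n}\circ\dots\circ\alpha_{1}$ in $C$; the empty string at $x$ is equivalent to the length-one string $1_{x}$ via the relation $s_{0}(x)=1_{x}$. In every case the length-one representative so obtained is precisely $\epsilon$ applied to the original string, since $\epsilon$ is a functor sending each $1$-simplex to the corresponding arrow of $C$ and sending concatenation to composition.

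With the normal form in hand, define $\eta\colon C(x,y)\to P(BC)(x,y)$ by sending a morphism $\gamma$ of $C$ to the class of the length-one string $\gamma$. Then $\epsilon\circ\eta=\mathrm{id}_{C(x,y)}$ is clear. For the other composite, given a string $\alpha$ we have $\eta(\epsilon[\alpha]) = [\,\epsilon[\alpha]\,]$, which by the normal-form computation of the previous paragraph equals $[\alpha]$; hence $\eta\circ\epsilon=\mathrm{id}$ as well. Thus $\epsilon$ restricts to a bijection on each morphism set, and, being the identity on objects, is an isomorphism of categories.

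The only step requiring any care is the reduction to normal form, specifically the bookkeeping that the defining relations — stated as local moves at a position $i$ inside a string — suffice to collapse an arbitrary string to its composite. This is a routine induction on string length, using that $B$ lands in $2$-coskeleta so that each composable pair of arrows of $C$ genuinely occurs as the boundary of a $2$-simplex of $BC$; I anticipate no substantive obstacle.
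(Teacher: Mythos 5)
Your proof is correct and follows essentially the same route as the paper: both rest on the map sending a morphism of $C$ to its length-one string (the paper's functor $s$, your $\eta$) together with the observation that any string collapses, via the $2$-simplices of $BC$ viewed as composition laws, to the single $1$-simplex given by its composite. The only difference is packaging — the paper deduces that $s$ is full and, from $\epsilon\cdot s = 1_{C}$, faithful, while you exhibit $\eta$ directly as a two-sided inverse on hom-sets — which is cosmetic.
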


\begin{proof}
There is a functor $s: C \to P(BC)$, which is the identity on objects,
and takes a morphism $\alpha: x \to y$ to the morphism that is represented by
the $1$-simplex $\alpha$ of $BC$. The composition law is preserved,
because the $2$-simplices of $BC$ are composition laws. The morphism
of $P(BC)$ that is represented by a string of $1$-simplices
\begin{equation*}
x =x_{0} \xrightarrow{\alpha_{1}} x_{1} \xrightarrow{\alpha_{2}} \dots
\xrightarrow{\alpha_{n}} x_{n}=y 
\end{equation*}
in $BC$ is also represented by the
composite $\alpha_{n} \cdots \alpha_{1}$, and it follows
that the functor $s$ is full. But $\epsilon \cdot s = 1_{C}$, so that
$s$ is faithful as well as full, and is therefore an isomorphism of
categories.
\end{proof}

\begin{lemma}\label{lem 2}
The canonical functor
\begin{equation*}
P(X \times Y) \to P(X) \times P(Y)
\end{equation*}
is an isomorphism for all simplicial sets $X$ and $Y$.
\end{lemma}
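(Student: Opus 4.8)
The plan is to reduce to the case of representable simplicial sets, where Lemma~\ref{lem 1} applies directly. First I would note that the canonical functor $P(X \times Y) \to P(X) \times P(Y)$ is natural in both variables, and that its source, its target, and the natural transformation itself are all compatible with the canonical presentations of $X$ and $Y$ as colimits of representables. Indeed $P$ is a left adjoint of $B$, so it preserves all colimits, and both $s\mathbf{Set}$ and $\mathbf{cat}$ are cartesian closed, so in either category the functor $A \times (-)$ preserves colimits. Writing $X = \colim_{\Delta^n \to X} \Delta^n$ and $Y = \colim_{\Delta^m \to Y} \Delta^m$ for the canonical colimits over the respective simplex categories, one obtains natural identifications
\begin{equation*}
P(X \times Y) \cong \colim_{\Delta^n \to X}\, \colim_{\Delta^m \to Y} P(\Delta^n \times \Delta^m), \qquad P(X) \times P(Y) \cong \colim_{\Delta^n \to X}\, \colim_{\Delta^m \to Y} \bigl( P(\Delta^n) \times P(\Delta^m) \bigr),
\end{equation*}
and these colimits have structure maps $P(u \times v)$, respectively $P(u) \times P(v)$, for a simplex $u$ of $X$ and $v$ of $Y$. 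Naturality of the canonical functor then identifies the functor of the statement with the map of colimits induced by the system of canonical functors $P(\Delta^n \times \Delta^m) \to P(\Delta^n) \times P(\Delta^m)$. It therefore suffices to treat the case $X = \Delta^n$, $Y = \Delta^m$.

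For that case, write $\Delta^n = B[n]$ and $\Delta^m = B[m]$, where $[n]$ and $[m]$ are the ordinal number categories. The nerve $B$ is a right adjoint, hence preserves products, so $\Delta^n \times \Delta^m = B([n] \times [m])$. Lemma~\ref{lem 1} now supplies isomorphisms $\epsilon: P(B([n] \times [m])) \to [n] \times [m]$ and $\epsilon \times \epsilon: P(B[n]) \times P(B[m]) \to [n] \times [m]$, and naturality of the counit $\epsilon$ with respect to the two projections identifies the canonical functor $P(\Delta^n \times \Delta^m) \to P(\Delta^n) \times P(\Delta^m)$ with the identity of $[n] \times [m]$. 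This finishes the reduction, and hence the proof.

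I do not anticipate a serious obstacle: once the colimit reduction is in place, the rest is a formal consequence of Lemma~\ref{lem 1}. The one point needing attention is the first step --- checking that $P$ and the two product functors really do preserve the canonical colimit presentations, so that the functor in the statement genuinely is the colimit of the evident pointwise isomorphisms. As an alternative one can argue by a direct combinatorial computation: the functor is the identity on objects since $(X \times Y)_0 = X_0 \times Y_0$; it is full because a pair of edge-strings in $X$ and in $Y$ of possibly unequal lengths can be padded with degenerate edges $s_0$ and interleaved into a single edge-string of $X \times Y = X_1 \times Y_1$ with the prescribed image; and faithfulness amounts to showing that the relations defining $P(X \times Y)$ --- the $2$-simplex relations, available because $(X \times Y)_2 = X_2 \times Y_2$, together with $s_0(x,y) = 1_{(x,y)}$ --- suffice to realize any independent pair of relation-sequences in $X$ and in $Y$. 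In that approach faithfulness, in particular the bookkeeping with the shuffle $2$-simplices in $\Delta^1 \times \Delta^1$, is the delicate part, which is why I would favour the colimit argument above.
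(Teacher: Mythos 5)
Your proof is correct and follows essentially the same route as the paper: the paper also handles $\Delta^{n} \times \Delta^{m} = B(\mathbf{n}) \times B(\mathbf{m})$ via Lemma \ref{lem 1} and then observes that the comparison map is a colimit of these pointwise isomorphisms, using that $X \times Y$ is a colimit of products of simplices. Your write-up just makes explicit the colimit-preservation facts (left adjointness of $P$, cartesian closedness of both categories) that the paper leaves implicit.
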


\begin{proof}
If $C$ and $D$ are small categories, then there is a diagram
\begin{equation*}
\xymatrix{
P(B(C \times D)) \ar[r] \ar[d]_{\epsilon}^{\cong} & P(BC) \times P(BD) 
\ar[d]^{\epsilon \times \epsilon}_{\cong} \\
C \times D \ar[r]_{1} & C \times D
}
\end{equation*}
by Lemma \ref{lem 1}, so the claim holds for $X=BC$ and $Y=BD$. In
general $X \times Y$ is a colimit of products
\begin{equation*}
\Delta^{n} \times \Delta^{m} = B(\mathbf{n}) \times B(\mathbf{m}), 
\end{equation*}
and so the comparison map is a colimit of the isomorphisms
\begin{equation*}
P(\Delta^{n} \times \Delta^{m}) \xrightarrow{\cong} P(\Delta^{n}) \times P(\Delta^{m}).
\end{equation*}
\end{proof}

\begin{corollary}\label{cor 3}
\begin{itemize}
\item[1)] 
Every simplicial homotopy $h: X \times \Delta^{1} \to Y$
  between maps $f,g: X \to Y$ induces a natural transformation
  $h_{\ast}: P(X) \times \mathbf{1} \to P(Y)$ between the
  corresponding functors $f_{\ast},g_{\ast}: P(X) \to P(Y)$.
\item[2)]
Every simplicial homotopy equivalence $X \to Y$ induces a homotopy equivalence
of categories $P(X) \to P(Y)$.
\item[3)]
Every trivial Kan fibration $\pi: X \to Y$ induces a strong
deformation retraction $\pi_{\ast}: P(X) \to P(Y)$ of $P(X)$ onto
$P(Y)$. The functor $\pi_{\ast}$ is a homotopy equivalence of categories.
\end{itemize}
\end{corollary}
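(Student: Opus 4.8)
The plan is to derive all three statements from Lemmas~\ref{lem 1} and \ref{lem 2}, together with one lifting argument for part~3). For 1), note that $P(\Delta^{1}) = P(B\mathbf{1}) \cong \mathbf{1}$ by Lemma~\ref{lem 1}, where $\mathbf{1}$ is the category $0 \to 1$. Applying $P$ to the homotopy $h$ and composing with the isomorphism $P(X \times \Delta^{1}) \xrightarrow{\cong} P(X) \times P(\Delta^{1}) \cong P(X) \times \mathbf{1}$ of Lemma~\ref{lem 2} yields a functor $h_{\ast}: P(X) \times \mathbf{1} \to P(Y)$. Since a functor out of $P(X) \times \mathbf{1}$ is the same thing as a natural transformation between its restrictions along $P(X) \times \{0\}$ and $P(X) \times \{1\}$, and since $h$ restricts to $f$ and $g$ on $X \times \{0\}$ and $X \times \{1\}$, the functor $h_{\ast}$ is the required natural transformation between $f_{\ast}$ and $g_{\ast}$.

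For 2), choose a simplicial homotopy inverse $g: Y \to X$ of the homotopy equivalence $f$, together with homotopies $gf \simeq 1_{X}$ and $fg \simeq 1_{Y}$. Functoriality of $P$ gives $g_{\ast}f_{\ast} = P(gf)$ and $f_{\ast}g_{\ast} = P(fg)$, and by 1) the two homotopies induce natural transformations relating $g_{\ast}f_{\ast}$ to $1_{P(X)}$ and $f_{\ast}g_{\ast}$ to $1_{P(Y)}$. A natural transformation of functors induces a simplicial homotopy of the corresponding maps of nerves, so $Bf_{\ast}$ and $Bg_{\ast}$ are homotopy inverse to each other, and $f_{\ast}: P(X) \to P(Y)$ is therefore a homotopy equivalence of categories.

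For 3), first produce a section $s: Y \to X$ of $\pi$ by lifting $1_{Y}$ through $\pi$ along the cofibration $\emptyset \to Y$, using that $\pi$ is a trivial fibration; then $\pi s = 1_{Y}$ and $sY \subseteq X$ is a subcomplex isomorphic to $Y$. Next construct a simplicial homotopy $h: X \times \Delta^{1} \to X$ from $1_{X}$ to $s\pi$ as a lift in the square whose left-hand (cofibration) map is
\[
(X \times \partial\Delta^{1}) \cup (sY \times \Delta^{1}) \hookrightarrow X \times \Delta^{1},
\]
whose right-hand map is $\pi: X \to Y$, whose top map $(X \times \partial\Delta^{1}) \cup (sY \times \Delta^{1}) \to X$ is given by $1_{X}$ on $X \times \{0\}$, by $s\pi$ on $X \times \{1\}$, and by the projection onto $sY \subseteq X$ on $sY \times \Delta^{1}$ (these agree on the overlaps, since $s\pi s = s$), and whose bottom map is $X \times \Delta^{1} \to X \xrightarrow{\pi} Y$. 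A lift $h$ exists because $\pi$ is a trivial fibration. The homotopy $h$ is stationary along $sY \times \Delta^{1}$, so by 1) it induces a natural transformation $1_{P(X)} \Rightarrow s_{\ast}\pi_{\ast}$ whose restriction along $s_{\ast}$ is the identity; combined with $\pi_{\ast}s_{\ast} = P(\pi s) = 1_{P(Y)}$, this exhibits $\pi_{\ast}$ as a strong deformation retraction of $P(X)$ onto $P(Y)$. As in 2), passing to nerves shows $\pi_{\ast}$ is a homotopy equivalence of categories.

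I expect the only point requiring genuine care to be the construction of $h$ in part~3): checking that the three prescriptions defining the top map agree on $sY \times \partial\Delta^{1}$, that the indicated inclusion is a monomorphism (hence a cofibration), and that the square commutes. The remaining subtlety is bookkeeping: after applying $P$ and Lemma~\ref{lem 2}, the fact that $h$ is the projection on $sY \times \Delta^{1}$ must be seen to say precisely that the resulting natural transformation $1_{P(X)} \Rightarrow s_{\ast}\pi_{\ast}$ is the identity when restricted along $s_{\ast}$. No step is a serious obstacle; the content lies entirely in Lemmas~\ref{lem 1} and \ref{lem 2} and in the lifting property of trivial Kan fibrations.
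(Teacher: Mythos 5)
Your parts 1) and 2) are exactly the paper's argument: part 1) is obtained by combining Lemma \ref{lem 1} (to identify $P(\Delta^{1})$ with $\mathbf{1}$) with the product isomorphism of Lemma \ref{lem 2}, and part 2) follows formally from part 1). For part 3) you also follow the paper's route: a section $s$ of $\pi$ from the lifting property, and a homotopy $h: X \times \Delta^{1} \to X$ between $1_{X}$ and $s\pi$ obtained by lifting against $\pi$ over the constant homotopy $\pi \circ pr_{X}$. Your refinement of enlarging the cofibration to $(X \times \partial\Delta^{1}) \cup (sY \times \Delta^{1})$ so that $h$ is stationary on $sY$ is legitimate: the prescriptions agree on the overlap because $s\pi s = s$, the square commutes, and the lift exists since $\pi$ is a trivial fibration.

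What your write-up omits is the step that carries the real content of the paper's proof of 3): the induced natural transformation $1_{P(X)} \Rightarrow s_{\ast}\pi_{\ast}$ must be shown to be a natural \emph{isomorphism}. In the path category of a general simplicial set a $1$-simplex need not be invertible, so ``natural transformation, stationary on $sY$, plus $\pi_{\ast}s_{\ast} = 1_{P(Y)}$'' only yields the homotopy-equivalence assertion (which is fine for the paper's weak notion of homotopy equivalence of categories); it does not by itself give the deformation retraction in the strength the paper intends, where $\pi_{\ast}$ becomes an equivalence of categories with strict section. The repair is already implicit in your construction: because your bottom map is $\pi \circ pr_{X}$, the homotopy $h$ covers the constant homotopy, so for each vertex $x$ the path $h|_{\{x\} \times \Delta^{1}}$ is a $1$-simplex of the fibre $F_{\pi(x)}$; the fibres of a trivial Kan fibration are Kan complexes, so by Lemma \ref{lem 4} these component paths are invertible in $P(F_{\pi(x)})$ and hence in $P(X)$, whence $h_{\ast}$ is a natural isomorphism. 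Adding that observation makes your argument match the paper's (and, with the rel-$sY$ condition, slightly sharpen it).
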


A functor $f: C \to D$ between small categories is a {\it homotopy
equivalence} if the induced map $BC \to BD$ is a homotopy
equivalence. This means that there is a functor $g: D \to C$ and
natural transformations $C \times \mathbf{1} \to C$ and $D \times
\mathbf{1} \to D$ between $g\cdot f$ and $1_{C}$ and between $f \cdot
g$ and $1_{D}$. Note that the direction of these natural
transformations is not specified.

\begin{proof}[Proof of Corollary \ref{cor 3}]
Statement 1) is a consequence of Lemma \ref{lem 1} and Lemma \ref{lem 2}.

For statement 2), we are assuming the existence of a simplicial set
map $g: Y \to X$, together with simplicial homotopies $X \times
\Delta^{1} \to X$ between $gf$ and $1_{X}$ and $Y \times \Delta^{1}
\to Y$ between $fg$ and $1_{Y}$. The claim follows from
statement 1).

To prove statement 3), it is a standard observation that $\pi$ has a
section $\sigma: Y \to X$ along with a simplicial homotopy $h: X
\times \Delta^{1} \to X$ between $\sigma \pi$ and $1_{X}$, so that $Y$
is a strong deformation retract of $X$. More explicitly, $h$ is
constructed by finding the lifting in the diagram
\begin{equation*}
\xymatrix{
X \times \partial\Delta^{1} \ar[rr]^{(1_{X},\sigma\pi)} \ar[d] && X \ar[d]^{\pi} \\
X \times \Delta^{1} \ar[r]_{\pi \times 1} \ar@{.>}[urr]^{h} 
& Y \times \Delta^{1} \ar[r]_{pr} & Y
}
\end{equation*}
where the projection $pr$ is a constant homotopy. If $x \in X_{0}$, then the composite
\begin{equation*}
\Delta^{1} \xrightarrow{(x,1)} X \times \Delta^{1} \xrightarrow{h} X
\end{equation*}
is a $1$-simplex of the fibre $F_{\pi(x)}$ over $\pi(x)$. This fibre
is a Kan complex, so that the path is invertible in $P(F_{\pi(x)})$
and hence in $P(X)$. It follows that the induced natural transformation
\begin{equation*}
h_{\ast}: P(X) \times \mathbf{1} \to P(X)
\end{equation*}
is a natural isomorphism.
\end{proof}

The set $\pi(X,Y)$ of naive homotopy classes between simplicial sets
$X$ and $Y$ is the set of path components of the function space
$\mathbf{hom}(X,Y)$. Generally the set of path components $\pi_{0}(X)$
of a simplicial set $X$ coincides with the set
\begin{equation*}
\pi_{0}P(X) = \pi_{0}(B(P(X)))
\end{equation*}
of path components of the path category of $X$. We shall be interested
in something stronger, namely the set
\begin{equation*}
\tau_{0}P(X)
\end{equation*}
of isomorphism classes of $P(X)$. 

Say that maps $f,g: X \to Y$ are
{\it strongly homotopic} if there is an isomorphism $f
\xrightarrow{\cong} g$ in the path category $P(\mathbf{hom}(X,Y))$, and write
\begin{equation*}
\tau_{0}(X,Y) = \tau_{0}P(\mathbf{hom}(X,Y)).
\end{equation*}

Suppose that $X$ is a Kan complex. Then a morphism of $P(X)$ that
is represented by a string of $1$-simplices
\begin{equation*}
x_{0} \xrightarrow{\alpha_{1}} x_{1} \xrightarrow{\alpha_{2}} \dots
\xrightarrow{\alpha_{n}} x_{n},
\end{equation*}
is also represented by a $1$-simplex $x_{0} \to x_{n}$. In effect, the
string
\begin{equation*}
x_{0} \xrightarrow{\alpha_{1}} x_{1} \xrightarrow{\alpha_{2}} x_{2}
\end{equation*}
defines a simplicial set map $\Lambda^{2}_{1} \to X$ that extends to
a $2$-simplex $\sigma: \Delta^{2} \to X$, and so the morphism
represented by the string $(\alpha_{1},\alpha_{2})$ is also
represented by the $1$-simplex $d_{1}(\sigma): x_{0} \to x_{2}$.

\begin{lemma}\label{lem 4}
Suppose that $X$ is a Kan complex. Then $P(X)$ is a groupoid.
\end{lemma}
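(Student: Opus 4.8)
The plan is to show that every morphism of $P(X)$ is invertible when $X$ is a Kan complex. Since a morphism of $P(X)$ is represented by a string of $1$-simplices, and composition is concatenation, it suffices to show that each generating morphism — the class of a single $1$-simplex $\alpha\colon x \to y$ — admits a two-sided inverse in $P(X)$. So first I would fix a $1$-simplex $\alpha\colon x \to y$ and produce a $1$-simplex that represents an inverse.

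The key step is to use the Kan extension property to build the relevant $2$-simplices. To get a left inverse, I would consider the horn $\Lambda^2_0 \to X$ whose $0$th and $1$st faces are $\alpha\colon x \to y$ and the degenerate simplex $s_0(x)\colon x \to x$, respectively; more carefully, one wants a $2$-simplex whose $d_2$ is $\alpha$, whose $d_1$ is degenerate at $x$, and whose remaining face $d_0$ is then a $1$-simplex $\beta\colon y \to x$. Filling the appropriate horn (a $\Lambda^2_0$-horn or $\Lambda^2_2$-horn, depending on which inverse one is building) produces such a $2$-simplex $\sigma$, and the defining relation $d_1(\sigma) = d_0(\sigma)\cdot d_2(\sigma)$ of $P(X)$ then reads, in the first case, $1_x = \beta \cdot \alpha$, exhibiting $\beta$ as a left inverse of the class of $\alpha$. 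Repeating the argument with the other horn (filling a $2$-simplex with $d_0 = \alpha$, $d_1$ degenerate at $y$, yielding a face $\gamma\colon y \to x$ with $1_y = \alpha\cdot\gamma$) gives a right inverse. A standard argument then shows the left and right inverses agree, so the class of $\alpha$ is invertible.

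Since $P(X)$ is generated under composition by the classes of $1$-simplices, and each such class is invertible, every morphism of $P(X)$ is a composite of invertible morphisms and hence invertible. Therefore $P(X)$ is a groupoid. I would also note that this is consistent with — indeed a cleaner version of — the observation made just before the statement, that in a Kan complex any string of $1$-simplices is already represented by a single $1$-simplex; combined with invertibility of single $1$-simplices, this immediately gives that all morphisms are invertible.

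The main obstacle, such as it is, is purely bookkeeping: getting the face conventions right so that filling the chosen horn $\Lambda^2_i \to X$ actually produces a $2$-simplex whose boundary, when read through the path-category relation $d_1(\sigma) = d_0(\sigma)\cdot d_2(\sigma)$, yields the identity on the correct vertex. One must be careful about which of $d_0,d_1,d_2$ is prescribed to be $\alpha$ and which is prescribed to be degenerate, and to check that the ``missing'' face is indeed a $1$-simplex from $y$ to $x$ (rather than, say, from $x$ to $y$). Once the orientations are pinned down, there is no remaining difficulty: the Kan condition supplies the fillers, and the relations in $P(X)$ do the rest.
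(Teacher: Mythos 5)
Your proof is correct and follows essentially the same route as the paper: fill the outer horns $\Lambda^2_0$ and $\Lambda^2_2$ with a degenerate face to produce a left inverse $g\cdot\alpha = 1_x$ and a right inverse $\alpha\cdot h = 1_y$ for each $1$-simplex $\alpha$, then identify the two inverses. The only cosmetic difference is that the paper reduces to single $1$-simplices via the preceding $\Lambda^2_1$-filling observation, while you reduce to generators of $P(X)$ directly; both reductions work, and your self-correction of the face bookkeeping ($d_2=\alpha$, $d_1=s_0(x)$, missing $d_0\colon y\to x$) lands on exactly the horns the paper uses.
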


\begin{proof}
Every morphism of $P(X)$ is represented by a $1$-simplex $\alpha: x
\to y$ of $X$. The solutions of the lifting problems
\begin{equation*}
\xymatrix@C=40pt{
\Lambda^{2}_{0} \ar[r]^{(\ ,s_{0}(x),\alpha)} \ar[d] & X \\
\Delta^{2} \ar@{.>}[ur]
}
\end{equation*}
and
\begin{equation*}
\xymatrix@C=40pt{
\Lambda^{2}_{2} \ar[r]^{(\alpha,s_{0}(y),\ )} \ar[d] & X \\
\Delta^{2} \ar@{.>}[ur]
}
\end{equation*}
imply that there are $1$-simplices $g: y \to x$ and $h: y \to x$
(respectively) such that $g \cdot \alpha = 1_{x}$ and $\alpha \cdot h
= 1_{y}$ in $P(X)$. But then
\begin{equation*}
g = g \cdot \alpha \cdot h = h
\end{equation*}
so that $\alpha$ has an inverse in $P(X)$.
\end{proof}

Joyal shows in \cite{Joyal-quasi-Kan} (see also Corollary \ref{cor 17}
below) that if $X$ is a quasi-category (see Section 2), then the path
category $P(X)$ is a groupoid if and only if $X$ is a Kan complex.

\begin{remark}
Lemmas \ref{lem 1} and \ref{lem 4} together imply that the functor $X
\mapsto P(X)$ does not preserve standard weak equivalences. In other words, it
is not the case that a weak equivalence $f: X \to Y$ necessarily
induces a weak equivalence $BP(X) \to BP(Y)$. Otherwise, the natural
weak equivalence $X \simeq B(\mathbf{\Delta}/X)$ and any fibrant model
$j: X \to Z$ ($Z$ a Kan complex) would together give weak equivalences
\begin{equation*}
X \simeq B(\mathbf{\Delta}/X) \simeq BP(X) \simeq BP(Z),
\end{equation*}
while $BP(Z)$ has only trivial homotopy groups in degrees above $1$.
\end{remark}

\begin{lemma}\label{lem 6}
Suppose that $i: \Lambda^{n}_{k} \subset \Delta^{n}$ is the standard
inclusion, where $n \geq 2$ and $k \ne 0,n$. Then the induced functor
\begin{equation*}
i_{\ast}: P(\Lambda^{n}_{k}) \to P(\Delta^{n})
\end{equation*}
is an isomorphism.
\end{lemma}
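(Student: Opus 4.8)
The plan is to produce an explicit inverse of $i_{\ast}$, exploiting the isomorphism $P(\Delta^{n}) \cong \mathbf{n}$ that comes from Lemma \ref{lem 1} applied to $\Delta^{n} = B\mathbf{n}$; here $\mathbf{n}$ denotes the poset $0 < 1 < \dots < n$, which is the free category on the linear graph $0 \to 1 \to \dots \to n$. First I would record two elementary facts about an inner horn $\Lambda^{n}_{k}$ (so $n \geq 2$ and $0 < k < n$): it contains all $n+1$ vertices of $\Delta^{n}$, and it contains every consecutive edge $\{i,i+1\}$ for $0 \leq i < n$. Indeed an edge $\{a,b\}$ of $\Delta^{n}$ is missing from $\Lambda^{n}_{k}$ only when $\{0,\dots,n\}\setminus\{a,b\} = \{k\}$, which forces $n = 2$ and $\{a,b\} = \{0,2\}$, not a consecutive edge; and every vertex survives because $n \geq 2$. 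In particular $i_{\ast}$ is a bijection on objects. Since a functor out of the free category $\mathbf{n}$ is determined by the images of the generators $i \to i+1$ with no relations to verify, there is a functor $s \colon \mathbf{n} = P(\Delta^{n}) \to P(\Lambda^{n}_{k})$ that is the identity on objects and sends $i \to i+1$ to the morphism $\gamma_{i}$ of $P(\Lambda^{n}_{k})$ represented by the edge $\{i,i+1\}$. As $i_{\ast}(\gamma_{i})$ is the arrow $i \to i+1$ of $P(\Delta^{n})$, we obtain $i_{\ast}\circ s = 1$.

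It then remains to check that $s\circ i_{\ast} = 1_{P(\Lambda^{n}_{k})}$. The composite $s\circ i_{\ast}$ is the identity on objects and sends the class of an edge $\{a,b\}$ with $a<b$ to $\gamma_{b-1}\cdots\gamma_{a}$; since $P(\Lambda^{n}_{k})$ is generated by the classes of its edges, it therefore suffices to show that in $P(\Lambda^{n}_{k})$ the class of $\{a,b\}$ is already equal to $\gamma_{b-1}\cdots\gamma_{a}$. I would prove this by induction on $b-a$, the case $b-a=1$ being trivial. For $b-a\geq 2$ note first that $n\geq 3$ (if $n=2$ then $\{a,b\}=\{0,2\}$, which is not an edge of $\Lambda^{2}_{1}$). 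The inductive step rests on choosing a vertex $c$ with $a<c<b$ for which the triangle $\{a,c,b\}$ is a $2$-simplex of $\Lambda^{n}_{k}$: granting this, the defining relation of $P$ attached to that triangle reads $[\{a,b\}]=[\{c,b\}]\cdot[\{a,c\}]$, and since $\{a,c\}$ and $\{c,b\}$ are faces of the triangle they are edges of $\Lambda^{n}_{k}$ of strictly smaller span, so the inductive hypothesis rewrites the right-hand side as $\gamma_{b-1}\cdots\gamma_{c}\cdot\gamma_{c-1}\cdots\gamma_{a}=\gamma_{b-1}\cdots\gamma_{a}$.

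The only place where genuine checking is needed --- and the main, if minor, obstacle --- is the existence of this pivot vertex $c$. The triangle $\{a,c,b\}$ lies in $\Lambda^{n}_{k}$ exactly when some $j\in\{0,\dots,n\}$ avoids $\{a,b,c,k\}$, i.e.\ when $\{a,b,c\}\cup\{k\}$ is a proper subset of $\{0,\dots,n\}$. If $a<k<b$, take $c=k$: then $\{a,b,c\}\cup\{k\}$ has three elements and $3\leq n$. If instead $k\leq a$ or $k\geq b$, take any $c$ with $a<c<b$: then $\{a,b,c\}\cup\{k\}$ has at most four elements, which settles the matter whenever $n\geq 4$; the only configurations left with $n=3$ correspond to the edges $\{0,2\}$, $\{1,3\}$ and $\{0,3\}$ of $\Delta^{3}$, for which a short check using $0<k<3$ exhibits a suitable $c$ in each case. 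This finishes the argument; no step is hard, the only delicate part being the low-dimensional bookkeeping for the pivot just described.
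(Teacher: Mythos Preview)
Your proof is correct and follows essentially the same strategy as the paper's: both observe that $P(\Delta^{n})\cong\mathbf{n}$ is free on the consecutive edges $i\to i+1$, all of which lie in $\Lambda^{n}_{k}$, so one obtains a section of $i_{\ast}$ and then must check that every edge of $\Lambda^{n}_{k}$ already equals the composite of consecutive edges inside $P(\Lambda^{n}_{k})$. The paper packages this last step via the naturality of the spine inclusion $\Path_{n}\subset\Delta^{n}$ (each edge lies in some face $d^{r}(\Delta^{n-1})$, and one transports the relation from there), whereas you carry it out by a direct induction on the span $b-a$ using an explicitly chosen pivot triangle $\{a,c,b\}\subset\Lambda^{n}_{k}$; your version is more hands-on but entirely self-contained.
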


\begin{proof}
Recall that the canonical map 
\begin{equation*}
P(\Delta^{n}) = P(B(\mathbf{n})) \xrightarrow{\epsilon} \mathbf{n}
\end{equation*}
is an isomorphism. A functor $\alpha: \mathbf{n} \to C$ can be
identified with the string of arrows
\begin{equation*}
\alpha(0) \xrightarrow{\alpha(0 \leq 1)} \alpha(1)
\xrightarrow{\alpha(1 \leq 2)} \dots \xrightarrow{\alpha(n-1 \leq n)}
\alpha(n).
\end{equation*}
It follows that the $1$-simplices $i \leq i+1: \Delta^{1} \to
\Delta^{n}$ together define a simplicial set inclusion
\begin{equation*}
\gamma_{n}: \Path_{n} = \Delta^{1} \ast \dots \ast \Delta^{1} \subset \Delta^{n}
\end{equation*}
defined on the join $\Path_{n}$ of $n$ copies of $\Delta^{1}$ (end to
end), which induces an isomorphism
\begin{equation*}
\gamma_{n\ast}: P(\Path_{n}) \xrightarrow{\cong} P(\Delta^{n}) \cong \mathbf{n}.
\end{equation*}

All $1$-simplices $i \leq i+1$ of $\Delta^{n}$ are members of
$\Lambda^{n}_{k}$ since $k \ne 0,1$: in effect, $i \leq i+1$ is a
member of $d^{n}(\Delta^{n-1})$ if $i \leq n-1$, and $ i \leq i+1$ is
in $d^{0}(\Delta^{n-1})$ if $i \geq 1$. It follows that there is a commutative diagram
\begin{equation*}
\xymatrix{
& \Lambda^{n}_{k} \ar[d]^{i} \\
\Path_{n} \ar[ur] \ar[r]_-{\gamma_{n}} & \Delta^{n}
}
\end{equation*} 
and hence a commutative diagram
\begin{equation*}
\xymatrix{
& P(\Lambda^{n}_{k}) \ar[d]^{i_{\ast}} \\
P(\Path_{n}) \ar[ur]^{\hat{\gamma}} \ar[r]_-{\gamma_{n\ast}}^-{\cong} 
& P(\Delta^{n})
}
\end{equation*} 
Every ordinal number map $\theta: \mathbf{m} \to \mathbf{n}$
determines a commutative diagram
\begin{equation*}
\xymatrix{
P(\Path_{m}) \ar[d]_{\gamma_{m\ast}}^{\cong} 
\ar[r]^{\tilde{\theta}} &
P(\Path_{n}) \ar[d]^{\gamma_{n\ast}}_{\cong} \\
P(\Delta^{m}) \ar[r]_{\theta_{\ast}} & P(\Delta^{n})
}
\end{equation*}
where $\tilde{\theta}$ sends the $1$-simplex $i \leq i+1$ to the composite
morphism
\begin{equation*}
\theta(i) \leq \theta(i)+1 \leq \theta(i)+2 \leq \dots 
\leq \theta(i+1).
\end{equation*}
It follows that every $1$-simplex $i \leq j$ of $\Lambda^{n}_{k}$,
which is in some face $d^{r}(\Delta^{n-1})$, is in the image of the
functor
\begin{equation*}
\hat{\gamma}: P(\Path_{n}) \to P(\Lambda^{n}_{k}).
\end{equation*}
The functor $\hat{\gamma}$ is therefore surjective on morphisms, and 
is an isomorphism.
\end{proof}

\section{Quasi-categories}

The class of {\it inner anodyne extensions} in simplicial sets is the
saturation of the set of morphisms
\begin{equation*}
i:\Lambda^{n}_{k} \subset \Delta^{n},\ k \ne 0,n.
\end{equation*}

A {\it quasi-category} is a simplicial set $X$ such that the map $X
\to \ast$ has the right lifting property with respect to all inner
anodyne extensions. A map $p: X \to Y$ that has the right lifting
property with respect to all inner anodyne extensions is called an
{\it inner fibration}.
\medskip

\noindent
{\bf Example}:\ Suppose that $C$ is a small category. Then $BC$ is a
quasi-category, by Lemma \ref{lem 6}.
\medskip

Here's an observation:

\begin{lemma}\label{lem 7}
Suppose that $i: A \to B$ is an inner anodyne extension. Then the map
$i: A_{0} \to B_{0}$ on vertices is a bijection.
\end{lemma}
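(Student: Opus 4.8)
The plan is to verify the claim first for the generating inner horn inclusions $\Lambda^n_k \subset \Delta^n$ with $n \geq 2$ and $k \neq 0,n$, and then to promote this to the whole saturated class by a standard closure argument. For the generators: the vertices of $\Delta^n$ are the maps $0,1,\dots,n$, and since $n \geq 2$ the horn $\Lambda^n_k$ already contains every vertex of $\Delta^n$. Indeed $\Lambda^n_k$ is the union of the faces $d^j\Delta^{n-1}$ for $j \neq k$, and a given vertex $i$ fails to lie in $d^j\Delta^{n-1}$ only when $i = j$; as there are at least two values of $j \in \{0,\dots,n\}$ with $j \neq k$, every vertex $i$ lies in some face of the horn. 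Hence $(\Lambda^n_k)_0 \to (\Delta^n)_0$ is a bijection for each generator.

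Next I would observe that the class $\mathcal{B}$ of monomorphisms $i: A \to B$ which are bijective on vertices is itself closed under the operations that generate the saturation: it is closed under pushout (a pushout $A \cup_C D \to B \cup_C D$ of a map in $\mathcal B$ is computed levelwise in $s\mathbf{Set}$, and the functor $Z \mapsto Z_0$ preserves pushouts, so bijectivity on vertices is inherited), under transfinite composition (again because $(-)_0$ commutes with the relevant colimits and a transfinite composite of bijections is a bijection), and under retracts (a retract of a bijection of sets is a bijection). Since $\mathcal B$ contains the generating set of inner horn inclusions and is saturated, it contains the entire class of inner anodyne extensions.

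The only point requiring a little care is the interaction between "being a monomorphism" and "being bijective on vertices" when forming pushouts: one must know that the pushout of a generating inner horn inclusion along an arbitrary map is still a monomorphism so that the class being saturated makes sense, but this is automatic because monomorphisms of simplicial sets are closed under pushout and the saturation is taken inside the class of monomorphisms. Granting that, the argument is entirely formal. I expect the main (mild) obstacle to be simply making the vertex count for the generators precise — i.e.\ checking that for $n \geq 2$ the missing face $d^k$ does not remove any vertex — after which the saturation step is routine.
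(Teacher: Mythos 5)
Your proposal is correct and follows essentially the same route as the paper: one checks that each vertex $i$ of $\Delta^{n}$ lies in every face $d^{j}$ with $j \ne i$, so for $n \geq 2$ all vertices survive into $\Lambda^{n}_{k}$, and then one observes that the class of monomorphisms that are bijective on vertices is saturated, hence contains all inner anodyne extensions. The paper states the saturation step without spelling out the pushout/transfinite-composition/retract verifications, which you supply correctly.
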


\begin{proof}
The class of monomorphisms $j: E \to F$ that are bijections on
vertices is saturated and includes all inclusions $\Lambda^{n}_{k}
\subset \Delta^{n}$, $0 < k < n$. 

To see this last claim, observe that
every vertex $i$ of $\Delta^{n}$ is in the face $d^{j}: \Delta^{n-1}
\to \Delta^{n}$ if $j \ne i$. Thus, if $n \geq 2$ then the vertex $i$ is in
at least two faces of $\Delta^{n}$. 
\end{proof}

The following result is more serious:

\begin{lemma}\label{lem 8}
Suppose that $S$ is a proper subset of the $(n-1)$-simplices $d^{i}$ in
$\Delta^{n}$ that contains $d^{0}$ and $d^{n}$,
and let $\langle S \rangle$ be the subcomplex of $\Delta^{n}$ that is
generated by the simplices in $S$. Then the inclusion $\langle S
\rangle \subset \Delta^{n}$ is inner anodyne.
\end{lemma}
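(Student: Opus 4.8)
The plan is to build up $\Delta^n$ from $\langle S\rangle$ by attaching the missing faces $d^i$, for $i\notin S$, one at a time, and to show that each such attachment is an inner anodyne extension. Order the missing indices $i_1<i_2<\dots<i_m$ (there is at least one, since $S$ is proper) and set $K_0=\langle S\rangle$, $K_j=\langle S\cup\{d^{i_1},\dots,d^{i_j}\}\rangle$, so that $K_m=\Delta^n$. It suffices to show each inclusion $K_{j-1}\subset K_j$ is inner anodyne; the composite is then inner anodyne by saturation. Attaching $d^{i_j}$ to $K_{j-1}$ is a pushout of $\partial\Delta^{n-1}\cap(\text{already-present faces})\hookrightarrow\Delta^{n-1}$ along $d^{i_j}:\Delta^{n-1}\to K_j$. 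So I need to identify, for the face $\Delta^{n-1}$ indexed by $i_j$, which of its own faces $d^k$ ($k=0,\dots,n-1$) are already present in $K_{j-1}$; these are exactly the codimension-one faces $d^{i_j}d^k$ of $\Delta^n$ that lie in some $d^r\subset K_{j-1}$ with $r\ne i_j$, together with any $d^k$ of $\Delta^{n-1}$ whose image is a face of a later-attached $d^{i_{j'}}$ with $j'>j$ but that is already generated — but the cleanest bookkeeping is: a codimension-two face $\Delta^{n-2}\subset\Delta^n$ is in $K_{j-1}$ iff it lies in $d^r$ for some $r\in S\cup\{i_1,\dots,i_{j-1}\}$.

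The key combinatorial fact to extract is that the boundary piece being filled in the $j$-th step is a horn $\Lambda^{n-1}_{k}$ in $\Delta^{n-1}$ that is inner, i.e. $0<k<n-1$. Using the simplicial identities, the face $d^k$ of the copy of $\Delta^{n-1}$ sitting as $d^{i_j}$ in $\Delta^n$ is missing from $K_{j-1}$ precisely when $d^{i_j}d^k$ fails to lie in any earlier face — and because $d^0$ and $d^n$ of $\Delta^n$ are always in $S$, one checks that the missing face of the $(n-1)$-simplex is never $d^0$ or $d^{n-1}$ of that simplex. Concretely, $d^{i_j}d^0=d^0d^{i_j+1}$ lies in $d^0\subset\langle S\rangle$, and $d^{i_j}d^{n-1}=d^{n-1}d^{i_j}\subset d^{n}$ (using $i_j\le n-1$, so $d^{i_j}$ is a face of $d^n$ after reindexing) — so the $0$th and $(n-1)$st faces of the attached simplex are always present, forcing the horn to be inner. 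Hence $K_{j-1}\cap d^{i_j}(\Delta^{n-1})\supseteq\Lambda^{n-1}_{k}$ for some $0<k<n-1$, and in fact I will argue it equals a union of inner horn–type subcomplexes which can themselves be filled by an induction on $n$ (invoking the statement of the lemma in dimension $n-1$, or just Lemma \ref{lem 6}'s style of argument), each step being inner anodyne.

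I expect the main obstacle to be the careful accounting of \emph{which} faces of the newly attached $\Delta^{n-1}$ are already present, because in general $K_{j-1}\cap d^{i_j}(\Delta^{n-1})$ need not be a single horn but a larger subcomplex missing several codimension-one faces; the remedy is to invoke the inductive hypothesis (Lemma \ref{lem 8} for $n-1$), observing that the present faces form a proper subset of the faces of $\Delta^{n-1}$ still containing its $0$th and $(n-1)$st faces, so that the inclusion into $\Delta^{n-1}$ is inner anodyne by induction. The base case $n=2$ is immediate: then $S$ must be exactly $\{d^0,d^2\}$, $\langle S\rangle=\Lambda^2_1$, and $\Lambda^2_1\subset\Delta^2$ is inner anodyne by definition. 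Assembling the pushout steps and the induction then yields the result.
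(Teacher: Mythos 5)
Your basic mechanism---attach the missing faces one at a time, and show each attachment is a pushout of an inclusion of the form treated by the lemma in dimension $n-1$---is the same as the paper's, but the filtration you set up has a genuine error at its endpoint. The subcomplex of $\Delta^{n}$ generated by \emph{all} of the faces $d^{0},\dots,d^{n}$ is $\partial\Delta^{n}$, not $\Delta^{n}$, so your $K_{m}$ is $\partial\Delta^{n}$ and the claim $K_{m}=\Delta^{n}$ is false. The error surfaces concretely at the last attachment step $j=m$: there $K_{m-1}=\Lambda^{n}_{i_{m}}$, so the already-present part of the attached $(n-1)$-simplex is its entire boundary, not a proper union of faces; the inductive hypothesis does not apply, that step is a pushout of $\partial\Delta^{n-1}\subset\Delta^{n-1}$, which is not inner anodyne (inner anodyne maps are anodyne, hence standard weak equivalences, which boundary inclusions are not), and the leftover inclusion $\partial\Delta^{n}\subset\Delta^{n}$ is not inner anodyne for the same reason. (There are also small slips in the cosimplicial identities---e.g.\ $d^{i_{j}}d^{0}=d^{0}d^{i_{j}-1}$, not $d^{0}d^{i_{j}+1}$---but the containments you draw from them are correct.)

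The repair is to stop the filtration one step earlier, at the inner horn: attach only $d^{i_{1}},\dots,d^{i_{m-1}}$, reaching $K_{m-1}=\Lambda^{n}_{i_{m}}$ (note $0<i_{m}<n$ because $d^{0},d^{n}\in S$), and finish with the inner horn inclusion $\Lambda^{n}_{i_{m}}\subset\Delta^{n}$, which is inner anodyne by definition and fills the last face and the top-dimensional simplex simultaneously. With that change your intermediate bookkeeping does go through: for $j<m$ the face of the attached $\Delta^{n-1}$ cut out by the still-missing $d^{i_{m}}$ is not in $K_{j-1}$ (a codimension-two face of $\Delta^{n}$ lies in exactly two codimension-one faces, here $d^{i_{j}}$ and $d^{i_{m}}$, neither of which is a generator of $K_{j-1}$), so the present faces form a proper subset of the faces of $\Delta^{n-1}$ containing its $0$th and $(n-1)$st faces, and the induction on dimension applies. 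The corrected argument is exactly the paper's proof, organized as an increasing filtration rather than as a decreasing induction on $\vert S\vert$ whose base case $\vert S\vert = n$ is the inner horn.
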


\begin{proof}
The proof is by decreasing induction on the cardinality of $S$ and
increasing induction on the dimension $n$. Observe that $2 \leq \vert
S \vert \leq n$, and $\langle S \rangle = \Lambda^{n}_{k}$ for some
$k$ if $\vert S \vert = n$.

Suppose that $S'$ is obtained from $S$ by adding a simplex
$d^{k}$, where $0 < k < n$. Then the intersection
\begin{equation*}
\langle d^{k} \rangle \cap \langle S' \rangle
\end{equation*}
is the subcomplex of $\Delta^{n-1} \cong \langle d^{k} \rangle$ that
is generated by the set $S''$ of simplices $d^{k}d^{i}$ and
$d^{k}d^{j-1}$, where the simplices $d^{i},d^{j}$ are the members of
$S'$ with $i < k$ and $j > k$, respectively. In particular, the bottom
and top faces $d^{k}d^{0}$ and $d^{k}d^{n-1}$ of $\langle d^{k}
\rangle$ are in $S''$, and $\vert S'' \vert = \vert S' \vert - 1 <
n-1$. 

There is a pushout diagram
\begin{equation*}
\xymatrix{
\langle S'' \rangle \ar[r] \ar[d]_{i} & \langle S' \rangle \ar[d] \\
\Delta^{n-1} \ar[r]_{d^{k}} & \langle S \rangle
}
\end{equation*} 
The inclusion $i$ is inner anodyne by induction on dimension, so that
the inclusion $\langle S ' \rangle \subset \langle S \rangle$ is inner
anodyne. Thus, since $\langle S \rangle \subset \Delta^{n}$ is inner
anodyne, the inclusion $\langle S' \rangle \subset \Delta^{n}$ is
inner anodyne as well.
\end{proof}

The following result is proved in Section 4 (Theorem \ref{th 45})
below. The proof is somewhat delicate, and makes heavy use of Lemma 
\ref{lem 8}.

\begin{theorem}\label{thm 9}
Suppose that $0 < k < n$. Then the inclusion
\begin{equation*}
(\Lambda^{n}_{k} \times \Delta^{m}) \cup (\Delta^{n} \times \partial\Delta^{m})
\subset \Delta^{n} \times \Delta^{m}
\end{equation*}
is inner anodyne.
\end{theorem}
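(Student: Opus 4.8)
We need to show that $(\Lambda^n_k \times \Delta^m) \cup (\Delta^n \times \partial\Delta^m) \subset \Delta^n \times \Delta^m$ is inner anodyne, for $0 < k < n$.

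Key tools available: Lemma 8 (subcomplexes of $\Delta^N$ generated by a proper subset of facets containing $d^0, d^N$ are inner anodyne inclusions), and the fact that inner anodyne maps form a saturated class.

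Standard approach: use the shuffle decomposition. The non-degenerate simplices of $\Delta^n \times \Delta^m$ correspond to monotone lattice paths — i.e., chains in the poset $[n] \times [m]$. I'd filter the prism $\Delta^n \times \Delta^m$ by attaching these non-degenerate $(n+m)$-simplices (shuffles) one at a time, in an order determined by some combinatorial statistic, and show each attachment is a pushout of an inclusion handled by Lemma 8.

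Let me think about the structure.

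**The proof.**

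The plan is to realize the inclusion as a transfinite composite of pushouts of inner anodyne maps of the form $\langle S \rangle \subset \Delta^{n+m}$ produced by Lemma~\ref{lem 8}. Write $A = (\Lambda^{n}_{k} \times \Delta^{m}) \cup (\Delta^{n} \times \partial\Delta^{m})$ for the source. The non-degenerate $(n+m)$-simplices of $\Delta^{n} \times \Delta^{m}$ are the maximal chains (``shuffles'') in the poset $[n] \times [m]$; each such shuffle $\sigma$ is a monomorphism $\Delta^{n+m} \to \Delta^{n} \times \Delta^{m}$, and the whole prism is built from $A$ by adjoining these shuffles together with their codimension-one faces that are not already present. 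I will fix a well-ordering $\sigma_{1}, \sigma_{2}, \dots$ of the shuffles (for instance by the lexicographic order on the sequence of ``vertical steps'', or equivalently by the number of lattice points weakly below the path) so that when $\sigma_{j}$ is attached, the intersection of $\Delta^{n+m} \cong \langle\sigma_{j}\rangle$ with the already-constructed subcomplex $A_{j-1}$ is exactly the subcomplex of $\Delta^{n+m}$ generated by a specified proper set of facets $d^{i}$. Setting $A_{0} = A$ and $A_{j} = A_{j-1} \cup \langle \sigma_{j}\rangle$, there is then a pushout
\begin{equation*}
\xymatrix{
\langle S_{j} \rangle \ar[r] \ar[d] & A_{j-1} \ar[d] \\
\Delta^{n+m} \ar[r]_-{\sigma_{j}} & A_{j}
}
\end{equation*}
and $\Delta^{n}\times\Delta^{m} = \bigcup_{j} A_{j}$.

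The heart of the argument is the combinatorial bookkeeping identifying $S_{j}$ and checking the hypotheses of Lemma~\ref{lem 8}: namely that $S_{j}$ is a \emph{proper} subset of the facets of $\Delta^{n+m}$ and \emph{contains} $d^{0}$ and $d^{n+m}$. A facet $d^{i}$ of a shuffle $\sigma$ is obtained by deleting the $i$-th vertex of the lattice path; such a facet lies in $A_{j-1}$ precisely when (a) deleting that vertex produces a path that factors through $\Lambda^{n}_{k} \times \Delta^{m}$ or $\Delta^{n} \times \partial\Delta^{m}$, or (b) that facet is shared with an earlier shuffle $\sigma_{i}$, $i < j$. The two ``diagonal-corner'' facets $d^{0}$ and $d^{n+m}$ of a shuffle are always of type (a): deleting the initial vertex $(0,0)$ or the terminal vertex $(n,m)$ lands in $\Delta^{n}\times\partial\Delta^{m}$, since the projection to $\Delta^{m}$ then misses a vertex — here one uses $m \geq 1$ in the relevant cases, and the degenerate/low-dimensional cases $m = 0$ or the prism being covered already are handled separately. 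Properness — that at least one interior facet of $\sigma_{j}$ is \emph{not} yet present — is where the hypothesis $0 < k < n$ and the choice of ordering are used: for the specific ``staircase'' shuffle that is attached last (or for each $\sigma_j$, the facet obtained by contracting across a step in a row other than the $k$-th), the projection to $\Delta^{n}$ hits a simplex that is not contained in $\Lambda^{n}_{k}$, so that facet is genuinely new, and by choosing the ordering so that this facet has not appeared in any $\sigma_{i}$ with $i<j$, we keep $S_j$ proper.

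The main obstacle — and the part requiring genuine care rather than routine checking — is precisely this last point: pinning down a well-ordering of the shuffles for which, simultaneously, (i) the intersection $\langle\sigma_j\rangle \cap A_{j-1}$ is generated by a set of facets (so that it is of the form $\langle S_j\rangle$ and Lemma~\ref{lem 8} even applies), and (ii) that set $S_j$ is proper. A priori the intersection could fail to be facet-generated, or could exhaust all facets (forcing one into the $\vert S\vert = n+m+1$ boundary case of Lemma~\ref{lem 8}, which is excluded). Resolving this is the delicate combinatorial content flagged in the remark preceding the theorem; the reference to ``heavy use of Lemma~\ref{lem 8}'' and to Section~4 / Theorem~\ref{th 45} suggests the authors organize the shuffles by an explicit numerical filtration (such as the lexicographic order on vertical-step positions) and verify (i)--(ii) by a direct induction, with the role of $0<k<n$ being to guarantee that when one contracts the path along a non-$k$-th row there is always a witnessing simplex outside $\Lambda^{n}_{k}$, keeping every $S_j$ strictly smaller than the full facet set. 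Once the ordering and the identification of each $S_j$ are in hand, saturation of the inner anodyne class closes the argument.
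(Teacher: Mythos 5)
Your overall strategy --- filtering $\Delta^{n} \times \Delta^{m}$ by its non-degenerate $(n+m)$-simplices (shuffles), attaching them one at a time in a suitable order, and recognizing each attachment as a pushout of an inclusion $\langle S \rangle \subset \Delta^{n+m}$ supplied by Lemma \ref{lem 8} --- is exactly the strategy of the paper's proof (Theorem \ref{th 45}). But as written this is a plan, not a proof: the claims that carry all the weight are left unestablished. You never fix the ordering (the paper uses the partial order $\sigma \leq \gamma$ defined by $i_{k} \leq i'_{k}$ for all $k$, filters by order-closed sets of shuffles, and adjoins a minimal missing shuffle at each stage); you never prove that the intersection of a newly attached shuffle with the previously built subcomplex is generated by codimension-one faces (this is the technical heart of the paper's argument, resting on the corner-flip analysis of Lemma \ref{lem 44}, the observation that a flipped corner face is shared by exactly two shuffles, one strictly smaller, and the discussion of interior simplices); and you only gesture at properness of $S_{j}$. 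You flag these yourself as ``the main obstacle,'' which is an accurate self-assessment: without them the theorem is not proved.

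Moreover, the one concrete justification you do offer is incorrect. You claim that the faces $d_{0}(\sigma)$ and $d_{n+m}(\sigma)$ of a shuffle always lie in $\Delta^{n} \times \partial\Delta^{m}$ because deleting $(0,0)$ or $(n,m)$ makes the projection to $\Delta^{m}$ miss a vertex. That fails whenever the first (resp.\ last) step of the shuffle is horizontal: deleting $(0,0)$ from a path beginning $(0,0) \to (1,0)$ leaves the second coordinate surjective onto $\mathbf{m}$, and the face instead lies in $d^{0}(\Delta^{n-1}) \times \Delta^{m}$, which is contained in $\Lambda^{n}_{k} \times \Delta^{m}$ precisely because $k \neq 0$ (dually, $k \neq n$ for $d_{n+m}$). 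This case analysis is where the hypothesis $0 < k < n$ actually enters, together with the properness check for the maximal shuffle: there the face $d_{k}(\sigma)$ projects onto the missing face $d^{k}$ of $\Lambda^{n}_{k}$, is surjective onto $\mathbf{m}$, and contains the vertex $(n,0)$, so it lies neither in the base nor in any other shuffle. That is quite different from the vague ``contracting along a non-$k$-th row'' mechanism you describe. So the gap is twofold: the delicate combinatorics are deferred rather than carried out, and the one step you did make precise both fails in general and misidentifies where the inner-horn hypothesis is used.
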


\begin{remark}
The inclusion $\Delta^{0} \to \Delta^{1}$ of a vertex induces a map
\begin{equation*}
(\Delta^{0} \times \Delta^{1}) \cup (\Delta^{1} \times \partial\Delta^{1}) \subset \Delta^{1} \times \Delta^{1},
\end{equation*}
which is not inner anodyne. This observation is a consequence of Lemma
\ref{lem 6}, since the induced map of path categories is not an
isomorphism.
\end{remark} 

\begin{corollary}\label{cor 11}
\begin{itemize}
\item[1)] Suppose that $A \subset B$ is an inclusion of simplicial
  sets, and that $X$ is a quasi-category. Then the map 
\begin{equation*}
i^{\ast}:
  \mathbf{hom}(B,X) \to \mathbf{hom}(A,X) 
\end{equation*}
is an inner fibration. If
  the map $i: A \to B$ is an inner anodyne extension then the map
  $i^{\ast}$ is a trivial Kan fibration.
\item[2)] Suppose that $X$ is a quasi-category and that $K$ is a
  simplicial set. Then the function complex $\mathbf{hom}(K,X)$ is a
  quasi-category.
\end{itemize}
\end{corollary}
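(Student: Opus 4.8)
The plan is to deduce both statements from Theorem \ref{thm 9} by the standard exponential (adjunction) argument, together with the characterization of inner fibrations and trivial Kan fibrations by lifting properties. First I would set up statement 1). A map $f\colon U \to V$ of simplicial sets has the left lifting property with respect to $i^{\ast}\colon \mathbf{hom}(B,X) \to \mathbf{hom}(A,X)$ if and only if, in every square
\begin{equation*}
\xymatrix{
U \ar[r] \ar[d]_{f} & \mathbf{hom}(B,X) \ar[d]^{i^{\ast}} \\
V \ar[r] & \mathbf{hom}(A,X)
}
\end{equation*}
a diagonal filler exists; by the exponential law $\mathbf{hom}(-,\mathbf{hom}(-,X)) \cong \mathbf{hom}(-\times -, X)$ this is equivalent to the existence of a filler in
\begin{equation*}
\xymatrix{
(U \times B) \cup_{U \times A} (V \times A) \ar[r] \ar[d] & X \ar[d] \\
V \times B \ar[r] & \ast
}
\end{equation*}
i.e. to the inclusion $(U \times B) \cup_{U \times A} (V \times A) \subset V \times B$ being an inner anodyne extension whenever we want $i^{\ast}$ to be an inner fibration (take $f$ to range over the generators $\Lambda^{n}_{k} \subset \Delta^{n}$, $0 < k < n$), respectively a monomorphism whenever we want $i^{\ast}$ to be a trivial Kan fibration (take $f$ to range over $\partial\Delta^{n} \subset \Delta^{n}$, and use that $X$ being a quasi-category only gives right lifting against inner anodyne maps, so we need the pushout-product inclusion to be inner anodyne).

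So the real content is a pushout-product statement: if $i\colon A \to B$ is a monomorphism and $j\colon C \to D$ is either an inner anodyne extension or an arbitrary monomorphism, then the pushout-product $(A \times D)\cup_{A \times C}(B \times C) \subset B \times D$ is inner anodyne provided one of $i,j$ is inner anodyne. Next I would prove this by the usual reduction to generators: the class of monomorphisms $i$ for which $i \mathbin{\hat{\times}} (\Lambda^{n}_{k}\subset\Delta^{n})$ is inner anodyne for all $0<k<n$ is saturated (pushout-product with a fixed map commutes with colimits in the other variable and preserves pushouts, and the inner anodyne extensions form a saturated class), so it suffices to check it for $i$ of the form $\partial\Delta^{m}\subset\Delta^{m}$; symmetrically one reduces the $j$-variable, and the base case $(\partial\Delta^{m}\subset\Delta^{m}) \mathbin{\hat{\times}} (\Lambda^{n}_{k}\subset\Delta^{n})$ is inner anodyne by Theorem \ref{thm 9}. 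This takes care of the ``inner anodyne $i$'' case of statement 1) after swapping the roles of the two variables; for the first assertion of statement 1) one instead keeps $i\colon A\subset B$ general and lets $j$ be the inner-anodyne generator, which is precisely the form Theorem \ref{thm 9} is stated in.

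For statement 2), apply statement 1) with $A = \emptyset \subset B = K$: the inclusion $\emptyset \to K$ is a monomorphism, $X$ is a quasi-category by hypothesis, hence $\mathbf{hom}(K,X) \to \mathbf{hom}(\emptyset,X) = \ast$ is an inner fibration, which says exactly that $\mathbf{hom}(K,X)$ is a quasi-category. The main obstacle is entirely packaged into Theorem \ref{thm 9}, which is cited as proved later; granting it, everything here is the formal ``pushout-product and adjunction'' bookkeeping, and the only points requiring a little care are remembering that $X$ merely a quasi-category forces us, in the trivial-Kan-fibration half of statement 1), to verify the pushout-product map is inner anodyne (not just anodyne), and keeping straight which variable is held general versus specialized to the generators in each of the two assertions.
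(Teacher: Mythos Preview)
Your proposal is correct and is precisely the standard adjunction/pushout-product argument that the paper has in mind; the paper states Corollary~\ref{cor 11} as an immediate consequence of Theorem~\ref{thm 9} without writing out a proof, and what you have supplied is exactly the routine bookkeeping (reduce to generators via saturation, invoke Theorem~\ref{thm 9}, then specialize $A=\emptyset$ for part~2) that makes that deduction explicit.
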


Suppose again that $X$ is a quasi-category, and suppose that
$\alpha,\beta: \Delta^{1} \to X$ are $1$-simplices $x \to y$. A right
homotopy $\alpha \Rightarrow_{R} \beta$ is a $2$-simplex $\sigma:
\Delta^{2} \to X$ with boundary
\begin{equation*}
\xymatrix@C=8pt{
& y \ar[dr]^{s_{0}(y)} & \\
x \ar[rr]_{\beta} \ar[ur]^{\alpha} && y
}
\end{equation*}
and a left homotopy $\beta \Rightarrow_{L} \alpha$ is a $2$-simplex of
$X$ with boundary
\begin{equation*}
\xymatrix@C=8pt{
& x \ar[dr]^{\alpha} & \\
x \ar[rr]_{\beta} \ar[ur]^{s_{0}(x)} && y
}
\end{equation*}
Then the following are equivalent (by suitable choices of $3$-simplices):
\begin{itemize}
\item[a)]
there is a right homotopy $\alpha \Rightarrow_{R} \beta$,
\item[b)]
there is a right homotopy $\beta \Rightarrow_{R} \alpha$,
\item[c)]
there is a left homotopy $\beta \Rightarrow_{L} \alpha$,
\item[d)]
there is a left homotopy $\alpha \Rightarrow_{L} \beta$.
\end{itemize}
If any one of these conditions holds, say that $\alpha$ is homotopic
to $\beta$, and write $\alpha \simeq \beta$.

Write $\ho(X)$ for the category whose objects are the vertices of $X$,
whose morphisms $[\alpha]: x \to y$ are the homotopy classes of paths
$\alpha: x \to y$ in $X$, and with composition law
\begin{equation*}
\ho(X)(x,y) \times \ho(X)(y,z) \to \ho(X)(x,z)
\end{equation*}
defined for classes $[\alpha]:x \to y$ and $[\beta]: y \to z$ by
\begin{equation*}
[\beta] \cdot [\alpha] = [d_{1}(\sigma)]
\end{equation*}
where $\sigma: \Delta^{2} \to X$ is a choice of extension, as in the diagram
\begin{equation*}
\xymatrix{
\Lambda^{2}_{1} \ar[r]^{(\beta,\ ,\alpha)} \ar[d]_{i} & X \\
\Delta^{2} \ar@{.>}[ur]_{\sigma}
}
\end{equation*}
One must show that the class $[d_{1}(\sigma)]$ is independent of the
choices that are made.

\begin{lemma}
There is an isomorphism of categories
\begin{equation*}
P(X) \cong \ho(X)
\end{equation*}
for all quasi-categories $X$. 
\end{lemma}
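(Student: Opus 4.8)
The plan is to construct a functor $\phi\colon P(X)\to\ho(X)$ which is the identity on objects, sends a morphism represented by a string of $1$-simplices to the composite of their homotopy classes in $\ho(X)$, and then show that $\phi$ is an isomorphism by exhibiting an inverse, or equivalently by showing it is both full and faithful. First I would check that $\phi$ is well-defined: composition in $P(X)$ is concatenation of strings, and since $\ho(X)$ is a category, $\phi$ respects concatenation provided it is constant on the equivalence classes that define $P(X)$'s morphism sets. The generating relation for these classes replaces a two-step substring $x_i\xrightarrow{\alpha_{i+1}}x_{i+1}\xrightarrow{\alpha_{i+2}}x_{i+2}$ by a single edge $\beta\colon x_i\to x_{i+2}$ in the presence of a $2$-simplex $\sigma$; but such a $\sigma$ is exactly a choice of extension in the horn-filling square defining the composition $[\alpha_{i+2}]\cdot[\alpha_{i+1}]$, so $[\beta]=[d_1(\sigma)]=[\alpha_{i+2}]\cdot[\alpha_{i+1}]$ and $\phi$ is constant on the relation. (The base relation $s_0(x)=1_x$ is sent to $[s_0(x)]=1_x$ in $\ho(X)$, which holds since a degenerate $2$-simplex witnesses $[s_0(x)]\cdot[\alpha]=[\alpha]$.)

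Next I would argue surjectivity on morphisms (fullness). Every morphism of $\ho(X)$ is a single homotopy class $[\alpha]$ of a $1$-simplex $\alpha\colon x\to y$, and $\alpha$ itself represents a morphism of $P(X)$ with $\phi$-image $[\alpha]$; so $\phi$ is full. Conversely, for faithfulness I would show that any morphism of $P(X)$ is represented by a single $1$-simplex: given a string $(\alpha_1,\dots,\alpha_n)$ with $n\ge 2$, the first two edges assemble into a map $\Lambda^2_1\to X$, which fills to a $2$-simplex since $X$ is a quasi-category, and the relation then lets us replace the length-$n$ string by a length-$(n-1)$ string; by induction every morphism is represented by some $\alpha\colon x\to y$. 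Then I must show that if $\alpha,\alpha'\colon x\to y$ represent the same morphism of $P(X)$ — i.e.\ become equal after a finite chain of the generating relations — and $[\alpha]=[\alpha']$ in $\ho(X)$, that these two facts are mutually consistent in the sense that $\phi$ is injective on $P(X)(x,y)$. The cleanest route is: two $1$-simplices $\alpha,\alpha'\colon x\to y$ are equal in $P(X)$ if and only if $\alpha\simeq\alpha'$. One direction is that a homotopy $\sigma$ of the form $\alpha\Rightarrow_R\alpha'$ is literally a $2$-simplex with faces $(\alpha',s_0(y),\alpha)$, which is a relation $\alpha'\sim\alpha\cdot s_0(y)=\alpha$ in $P(X)$ (using $s_0(y)=1_y$); the other direction follows because $\phi$ sends the $P(X)$-morphism of $\alpha$ to $[\alpha]$ and of $\alpha'$ to $[\alpha']$, so equality in $P(X)$ forces $[\alpha]=[\alpha']$. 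Combining, $\phi$ induces a bijection $P(X)(x,y)\xrightarrow{\cong}\ho(X)(x,y)$.

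The main obstacle I expect is the bookkeeping in the faithfulness argument — specifically making precise the claim that the equivalence relation on strings in $P(X)$, restricted to length-one strings, is exactly the homotopy relation $\simeq$ on $1$-simplices. A general chain of relations connecting $\alpha$ to $\alpha'$ passes through strings of varying length, and one has to track how each elementary move interacts with the homotopy relation; the equivalence a)--d) of left/right homotopies (established just before the statement via $3$-simplices) is the tool that keeps this under control, letting me normalise every zig-zag of $2$-simplices into a single homotopy. An alternative that sidesteps some of this is to verify directly that $\ho(X)$ has the universal property of $P(X)$: given any functor $F\colon P(X)\to C$, i.e.\ a graph map $X_1\to\Mor(C)$ respecting $2$-simplices and degeneracies, one checks it factors uniquely through $\phi$ because $F$ must send homotopic $1$-simplices to the same morphism (a homotopy being a $2$-simplex, hence a relation $F$ already respects), so $F$ descends to $\ho(X)$; uniqueness is clear since $\phi$ is identity on objects and surjective on morphisms. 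Either way the real content is that quasi-category horn-filling makes every string collapse to an edge and makes the two homotopy notions coincide.
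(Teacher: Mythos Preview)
Your proposal is correct and follows essentially the same route as the paper: define $\phi\colon P(X)\to\ho(X)$ by $\alpha\mapsto[\alpha]$, then use the fact that a (right or left) homotopy between $1$-simplices is itself a $2$-simplex and hence a relation in $P(X)$, so that $[\alpha]\mapsto\alpha$ is a well-defined inverse. The paper compresses all of this into two sentences, while you spell out the reduction of strings to single edges via inner horn filling and the full/faithful verification; the content is the same.
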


\begin{proof}
The functor $P(X) \to \ho(X)$
is induced by the assignment $\alpha \mapsto [\alpha]$ for paths
$\alpha$, while all members of a homotopy class represent the same
morphism in $P(X)$. Thus, there is a functor $\ho(X) \to P(X)$, and the
two functors are inverse to each other.
\end{proof}

It follows (choose some $3$-simplices) that if $\alpha: \Delta^{1} \to
X$ is invertible in $P(X)$, where $X$ is a quasi-category, then there
is a path $\beta: \Delta^{1} \to X$ together with $2$-simplices
$\sigma,\sigma': \Delta^{2} \to X$ having respective boundaries
\begin{equation*}
{\xymatrix@C=8pt{
& y \ar[dr]^{\beta} & \\
x \ar[rr]_{s_{0}(x)} \ar[ur]^{\alpha} && x
}}\quad
{\xymatrix@C=8pt{
& x \ar[dr]^{\alpha} & \\
y \ar[rr]_{s_{0}(y)} \ar[ur]^{\beta} && y
}}
\end{equation*}
Under these circumstances, say that the $1$-simplex $\alpha$ is a
{\it quasi-isomorphism} of $X$, as is $\beta$.

In the presence of such $2$-simplices for a path $\alpha:
\Delta^{1} \to X$ in an arbitrary simplicial set $X$, the
corresponding morphism $\alpha$ is invertible in the path category
$P(X)$. 
\medskip

Say that a simplicial set map $p: X \to Y$ is a {\it right fibration}
if $p$ has the right lifting property with respect to all inclusions
$\Lambda^{n}_{k} \subset \Delta^{n}$ with $k > 0$. This definition is
consistent with \cite{Lurie-HTT}.

\begin{lemma}\label{lem 13}
Suppose that $p: X \to Y$ is a right fibration, and that $X$ and $Y$
are quasi-categories. Suppose that $\alpha: \Delta^{1} \to X$ is a
$1$-simplex of $X$ such that $p(\alpha)$ is a quasi-isomorphism. Then
$\alpha$ is a quasi-isomorphism.
\end{lemma}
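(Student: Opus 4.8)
The plan is to reduce the statement to a lifting problem that witnesses $\alpha$ as a quasi-isomorphism, using the quasi-isomorphism data for $p(\alpha)$ downstairs together with the right lifting property of $p$. Write $\alpha : x \to y$, and let $f = p(\alpha) : p(x) \to p(y)$ in $Y$. By hypothesis there is a path $g : p(y) \to p(x)$ in $Y$ and $2$-simplices $\tau, \tau' : \Delta^2 \to Y$ exhibiting $g$ as a two-sided inverse to $f$ (boundaries as in the display preceding Lemma~\ref{lem 13}, with $\alpha, \beta$ replaced by $f, g$). The first step is to produce a candidate inverse $\beta : y \to x$ in $X$ lying over $g$. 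For this I would form the horn $\Lambda^2_2 \to X$ with faces $(\alpha, \ , s_0(x))$ — that is, the $2$-simplex we are trying to build should have $\alpha$ as its $d_2$-face, $s_0(x)$ as its $d_1$-face, and the sought $\beta$ as its $d_0$-face — and fill it against $p$ using the $2$-simplex $\tau$ (or $\tau'$) downstairs as the compatible lift of the target; since $p$ is a right fibration and $2 > 0$, the lift exists. Its $d_0$-face is a path $\beta : y \to x$ with $p(\beta) = g$, together with a $2$-simplex $\sigma$ over $\tau$ witnessing $\beta \cdot \alpha = s_0(x)$ in the appropriate sense.

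The second step is to produce the other witnessing $2$-simplex, showing $\alpha \cdot \beta = s_0(y)$. Here I would again set up a horn in $X$ — this time a $\Lambda^2_1$-horn with faces $(\alpha, \ , \beta)$, or a suitable $\Lambda^2_2$-horn built from $\beta$ and $\alpha$ — whose filler should have $d_0$ or $d_2$ equal to a degeneracy of $y$. The subtlety is that a naive horn fill only gives \emph{some} composite, not necessarily $s_0(y)$; to force the composite to be a degeneracy I expect to need a $3$-simplex argument, filling an inner horn $\Lambda^3_k$ (which $p$ lifts, being a right fibration and hence in particular an inner fibration) whose boundary data is assembled from $\sigma$, from $\tau'$ downstairs, and from the degeneracies $s_0(y), s_1(\alpha)$ etc. The point of the $3$-simplex is exactly the standard ``two-out-of-three for quasi-isomorphisms'' move: from $\beta\alpha = 1$ and knowing the image relations hold strictly downstairs, deduce $\alpha\beta = 1$ upstairs. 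This is where Lemma~\ref{lem 4}-style reasoning in the relevant fibres, or a direct simplicial identity chase, enters.

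The main obstacle I anticipate is precisely this second step: arranging the faces of the $3$-simplex so that one face is genuinely a degeneracy rather than merely homotopic to one, and checking that the horn being filled is an \emph{inner} horn (so that $p$, as an inner fibration coming from a right fibration, admits the lift) and that its downstairs image is already a filled simplex in $Y$ — this last because $Y$ itself is a quasi-category and $g$ is a genuine two-sided inverse there, so the required $3$-simplex in $Y$ exists. The bookkeeping of which horn index to use, and matching the orientations in the two displayed boundary diagrams that define ``quasi-isomorphism'', is the delicate part; the existence of all required lifts is then automatic from $p$ being a right fibration (for the outer-horn step producing $\beta$) and an inner fibration (for the $3$-simplex step). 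Once both $2$-simplices $\sigma, \sigma'$ over $\tau, \tau'$ are in hand, $\alpha$ is a quasi-isomorphism of $X$ by definition, and $p$ carries the data to the given data for $p(\alpha)$, completing the proof.
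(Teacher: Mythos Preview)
There is a genuine error in your first step. You describe a $2$-simplex with $d_{2}=\alpha$, $d_{1}=s_{0}(x)$, and $d_{0}=\beta$ the sought inverse; since the missing face is $d_{0}$, this is a $\Lambda^{2}_{0}$-horn, not a $\Lambda^{2}_{2}$-horn. Right fibrations have the right lifting property against $\Lambda^{n}_{k}\subset\Delta^{n}$ only for $k>0$, so they do \emph{not} fill $\Lambda^{2}_{0}$. (Your tuple notation $(\alpha,\ ,s_{0}(x))$ with the middle slot empty would instead indicate $\Lambda^{2}_{1}$, but that does not match your verbal description either, and in any case the resulting $d_{1}$-face would be a map $x\to y$, not an inverse.) The upshot is that the right-fibration hypothesis hands you a \emph{right} inverse for $\alpha$, not a left inverse: the correct horn is $\Lambda^{2}_{2}$ with $d_{0}=\alpha$ and $d_{1}=s_{0}(y)$, and the filler produces $d_{2}=\zeta:y\to x$ witnessing $\alpha\cdot\zeta=1_{y}$ in $P(X)$.

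The paper's proof does exactly this and then sidesteps your difficult second step entirely by working in the path category. Since $p(\zeta)$ is a one-sided inverse of the invertible $p(\alpha)$, it is itself a quasi-isomorphism of $Y$; hence the same $\Lambda^{2}_{2}$-lifting argument, applied with $\zeta$ in place of $\alpha$, yields a right inverse $\omega$ for $\zeta$ in $P(X)$. Then in $P(X)$ one has
\[
\alpha \;=\; \alpha\cdot(\zeta\cdot\omega) \;=\; (\alpha\cdot\zeta)\cdot\omega \;=\; \omega,
\]
so $\zeta\cdot\alpha=\zeta\cdot\omega=1_{x}$ and $\zeta$ is a two-sided inverse. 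No $3$-simplex bookkeeping is required; the ``right inverse of a right inverse'' trick replaces it completely.
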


We also say that the map $p$ of Lemma \ref{lem 13} {\it creates}
quasi-isomorphisms.

\begin{proof}
The simplex $p(\alpha)$ has a right inverse, so there is a $2$ simplex
$\sigma: \Delta^{2} \to Y$ with boundary $\partial \sigma =
(p(\alpha),p(y),\gamma)$. The lifting exists in the diagram
\begin{equation*}
\xymatrix{
\Lambda^{2}_{2} \ar[r]^{(\alpha,y,\ )} \ar[d] & X \ar[d]^{p} \\
\Delta^{2} \ar[r]_{\sigma} \ar@{.>}[ur]^{\theta} & Y
}
\end{equation*}
so that $\alpha$ has a right inverse $\zeta=d_{2}\theta$ in
$P(X)$. Similarly $\zeta$ has a right inverse $\omega$ in $P(X)$. Thus,
\begin{equation*}
\alpha = \alpha \cdot \zeta \cdot \omega = \omega
\end{equation*}
in $P(X)$, so that $\alpha$ is a quasi-isomorphism.
\end{proof}

Write $\Delta^{m} \ast\Delta^{n} \cong \Delta^{m+n+1}$ for the (poset)
join of the simplices $\Delta^{n}$ and $\Delta^{m}$. The join $X \ast
Y$ of the simplicial sets $X$ and $Y$ is defined by the colimit formula
\begin{equation*}
X \ast Y = \varinjlim_{\Delta^{m} \to X,\ \Delta^{n} \to
  Y}\ \Delta^{m} \ast \Delta^{n}.
\end{equation*}
where the colimit is computed over the product $\mathbf{\Delta}/X
\times \mathbf{\Delta}/Y$ of the respective simplex categories.
It is relatively easy to show that the maps
\begin{equation*}
\begin{aligned}
&(\Lambda^{m}_{k} \ast \Delta^{n}) \cup (\Delta^{m} \ast \partial\Delta^{n}) \to \Delta^{m} \ast \Delta^{n}\\
&(\partial\Delta^{m} \ast \Delta^{n}) \cup (\Delta^{m} \ast \partial^{n}_{k}) \to \Delta^{m} \ast \Delta^{n}\\
&(\partial\Delta^{m} \ast \Delta^{n}) \cup (\Delta^{m} \ast \partial\Delta^{n}) \to
\Delta^{m} \ast \Delta^{n}
\end{aligned}
\end{equation*}
are isomorphic to the maps $\Lambda^{m+n+1}_{k} \subset
\Delta^{m+n+1}$, $\Lambda^{m+n+1}_{m+k+1} \subset \Delta^{m+n+1}$, and
$\partial\Delta^{m+n+1} \subset \Delta^{m+n+1}$, respectively. 

Suppose that $\mathbf{hom}_{j}(X,Y)$ is the simplicial set with
$r$-simplices given by the maps $\Delta^{r} \ast X \to Y$. If $p: X
\to Y$ is an inner fibration then the induced map
\begin{equation*}
(i^{\ast},p_{\ast}): 
\mathbf{hom}_{j}(\Delta^{n},X) \to \mathbf{hom}_{j}(\partial\Delta^{n},X) 
\times_{\mathbf{hom}_{j}(\partial\Delta^{n},Y)} \mathbf{hom}_{j}(\Delta^{n},Y)
\end{equation*}
is a right fibration provided that $n \geq 0$. It follows (by setting
$Y = \ast$) that all maps
\begin{equation*}
\mathbf{hom}_{j}(\Delta^{n},X) \to \mathbf{hom}_{j}(\partial\Delta^{n},X)
\end{equation*}
are right fibrations if $X$ is a quasi-category. 

\begin{proposition}
Suppose given a commutative solid arrow diagram
\begin{equation*}
\xymatrix{
\Lambda^{n}_{0} \ar[r]^{\alpha} \ar[d] & X \ar[d]^{p} \\
\Delta^{n} \ar[r] \ar@{.>}[ur] & Y
}
\end{equation*}
where $n \geq 2$, $X$ and $Y$ are quasi-categories, $p$ is an inner fibration,
and the map $\alpha$ takes the $1$-simplex $0 \to 1$ to a
quasi-isomorphism of $X$. Then the dotted arrow
lifting exists, making the diagram commute.
\end{proposition}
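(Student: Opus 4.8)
The plan is to reduce the lifting problem to the already-proven fact (the displayed consequence of Theorem \ref{thm 9}, via joins) that the maps $\mathbf{hom}_j(\Delta^n,X) \to \mathbf{hom}_j(\partial\Delta^n,X)$, and more generally the matching maps $(i^\ast,p_\ast)$ for an inner fibration $p$, are right fibrations. The key point is to view a filling problem for $\Lambda^n_0 \subset \Delta^n$ not through the inner-horn machinery (which does not apply, since $k=0$), but through the decomposition $\Delta^n \cong \Delta^0 \ast \Delta^{n-1}$, writing the vertex $0$ as the cone point. Under this identification, $\Lambda^n_0$ is the subcomplex $(\Delta^0 \ast \partial\Delta^{n-1}) \cup (\partial\Delta^0 \ast \Delta^{n-1}) = (\Delta^0 \ast \partial\Delta^{n-1}) \cup (\{1,\dots,n\})$, i.e.\ it omits exactly the face $d^0$ opposite the cone point. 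So the outer horn inclusion is, in join coordinates, of the form $(\Delta^0 \ast \partial\Delta^{n-1}) \cup (\partial\Delta^0 \ast \Delta^{n-1}) \subset \Delta^0 \ast \Delta^{n-1}$.

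Next I would rephrase the square. Adjointing across the join, a map $\Delta^{n-1} \to \mathbf{hom}_j(\Delta^0,X)$ is the same as a map $\Delta^0 \ast \Delta^{n-1} \to X$; the restriction along $\partial\Delta^{n-1}$ together with the data at the cone point assembles the original diagram into a lifting problem
\begin{equation*}
\xymatrix{
\partial\Delta^{n-1} \ar[r] \ar[d] & \mathbf{hom}_j(\Delta^0,X) \ar[d]^{(i^\ast,p_\ast)} \\
\Delta^{n-1} \ar[r] \ar@{.>}[ur] & \mathbf{hom}_j(\partial\Delta^0,X) \times_{\mathbf{hom}_j(\partial\Delta^0,Y)} \mathbf{hom}_j(\Delta^0,Y),
}
\end{equation*}
where the vertex $0$ of $\Delta^n$ supplies the component in $\mathbf{hom}_j(\Delta^0,X) = X$ and the map to $Y$ supplies the rest. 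By the remarks preceding the proposition, the right-hand vertical map is a right fibration. A right fibration has the right lifting property against every inclusion $\Lambda^{n-1}_k \subset \Delta^{n-1}$ with $k>0$, but \emph{not} automatically against $\partial\Delta^{n-1} \subset \Delta^{n-1}$ — so we are not yet done, and this is where the hypothesis on $\alpha$ must be used.

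The main obstacle, then, is exactly this last step: promoting a right fibration's horn-filling to the boundary-filling we need, using that $\alpha$ sends $0\to 1$ to a quasi-isomorphism. The idea is that a right fibration $q$ restricts to a Kan fibration over the sub-simplicial-set of quasi-isomorphisms (this is the standard fact underlying Lemma \ref{lem 13}), and the quasi-isomorphism hypothesis pins the relevant edge into that locus; alternatively, one argues by induction on $n$ using Lemma \ref{lem 13} to invert the edge $0\to 1$, replace the outer horn $\Lambda^n_0$ by an inner horn $\Lambda^n_1$ after composing with the $2$-simplex witnessing invertibility of that edge, and fill that inner horn against the inner fibration $p$ directly. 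Concretely: choose (via $p$ an inner fibration and the data already present) a $2$-simplex on the vertices $0,1$ exhibiting a quasi-inverse of $\alpha|_{0\to 1}$; glue it to $\alpha$ to extend the $\Lambda^n_0$-data to a $\Lambda^{n+1}_1$-type horn in a suitable auxiliary simplex; fill that inner horn using $p$; and restrict back to recover the desired filler of the original $\Delta^n$. Verifying that the gluing is well-defined and that the restriction lands on the nose on the prescribed boundary is the delicate combinatorial bookkeeping, but it is routine once the join reformulation is in place.
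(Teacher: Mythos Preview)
Your join decomposition is set up incorrectly, and this is a genuine gap rather than a cosmetic slip. You write
\[
\Lambda^{n}_{0} \;=\; (\Delta^{0}\ast\partial\Delta^{n-1})\cup(\partial\Delta^{0}\ast\Delta^{n-1}),
\]
and you correctly identify the second piece as $\{1,\dots,n\}=d^{0}$; but then you conclude that the union ``omits exactly the face $d^{0}$'', which contradicts what you just said. In fact that union is all of $\partial\Delta^{n}$, so your adjoint square is the lifting problem for $\partial\Delta^{n}\subset\Delta^{n}$, not for $\Lambda^{n}_{0}\subset\Delta^{n}$. (The correct identity is $\Lambda^{n}_{0}=\Delta^{0}\ast\partial\Delta^{n-1}$ alone.) There is a second mismatch: the paper's $\mathbf{hom}_{j}(A,X)$ has $r$-simplices the maps $\Delta^{r}\ast A\to X$, so adjointing through it places the cone point on the \emph{right}. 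Your diagram with $\mathbf{hom}_{j}(\Delta^{0},X)$ therefore encodes the decomposition $\Delta^{n-1}\ast\Delta^{0}$ with cone vertex $n$, not $0$.

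The paper's proof avoids both problems by splitting off $\Delta^{1}$ rather than $\Delta^{0}$: write $\Delta^{n}\cong\Delta^{1}\ast\Delta^{n-2}$, so that $\Lambda^{n}_{0}\cong(\Lambda^{1}_{0}\ast\Delta^{n-2})\cup(\Delta^{1}\ast\partial\Delta^{n-2})$, and the adjoint square becomes a lifting problem for $\Lambda^{1}_{0}\subset\Delta^{1}$ against the right fibration $(i^{\ast},p_{\ast})$ on $\mathbf{hom}_{j}(\Delta^{n-2},-)$. This is the point: the hypothesis on the edge $0\to1$ is now exactly the statement that the $1$-simplex $\alpha_{\ast}$ in the base is a quasi-isomorphism (Lemma~\ref{lem 13} transports it up through the chain of right fibrations). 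One then chooses an inverse $\beta$, solves the $\Lambda^{1}_{1}$ problem for $\beta$ (legitimate, since $(i^{\ast},p_{\ast})$ is a right fibration), and inverts the resulting lift to produce the desired filler. Your fallback sketch---``replace the outer horn by an inner horn after composing with a witnessing $2$-simplex''---is gesturing at a different and workable strategy, but as written it is not a proof; the paper's $\Delta^{1}\ast\Delta^{n-2}$ reduction is what makes the invertibility hypothesis do its job cleanly.
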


\begin{proof}
The lifting problem
\begin{equation*}
\xymatrix{
\Lambda^{n}_{0} \ar[r]^{\alpha} \ar[d] & X \ar[d]^{p} \\
\Delta^{n} \ar[r] \ar@{.>}[ur] & Y
}
\end{equation*}
is isomorphic to a lifting problem
\begin{equation*}
\xymatrix{
\Lambda^{1}_{0} \ar[r]^{x} \ar[d] & \mathbf{hom}_{j}(\Delta^{n-2},X) \ar[d]^{(i^{\ast},p_{\ast})} \\
\Delta^{1} \ar[r]_-{\alpha_{\ast}} \ar@{.>}[ur] & 
\mathbf{hom}_{j}(\partial\Delta^{n-2},X) \times_{\mathbf{hom}_{j}(\partial\Delta^{n-2},Y)} \mathbf{hom}_{j}(\Delta^{n-2},Y)
}
\end{equation*}
where $i: \partial\Delta^{n-2} \subset \Delta^{n}$ The map $(i^{\ast},p_{\ast})$ is a right fibration, as is the projection map
\begin{equation*}
\mathbf{hom}_{j}(\partial\Delta^{n-2},X) \times_{\mathbf{hom}_{j}(\partial\Delta^{n-2},Y)} \mathbf{hom}_{j}(\Delta^{n-2},Y)
\to \mathbf{hom}_{j}(\partial\Delta^{n-2},X).
\end{equation*}
The inclusion of any vertex of $\partial\Delta^{n-2}$ induces a right fibration
\begin{equation*}
\mathbf{hom}_{j}(\partial\Delta^{n-2},X) \to X.
\end{equation*}
It follows from Lemma \ref{lem 13} that the $1$-simplex $\alpha_{\ast}$ is
a quasi-isomorphism.

Let $\beta$ be an inverse for $\alpha_{\ast}$. The lifting problem 
\begin{equation*}
\xymatrix{
\Lambda^{1}_{1} \ar[r]^{x} \ar[d] & \mathbf{hom}_{j}(\Delta^{n-2},X) \ar[d]^{(i^{\ast},p_{\ast})} \\
\Delta^{1} \ar[r]_-{\beta} \ar@{.>}[ur]^{\theta} & 
\mathbf{hom}_{j}(\partial\Delta^{n-2},X) \times_{\mathbf{hom}_{j}(\partial\Delta^{n-2},Y)} \mathbf{hom}_{j}(\Delta^{n-2},Y)
}
\end{equation*}
has a solution since the map $(i^{\ast},p_{\ast})$ is a right
fibration.  It follows from Lemma \ref{lem 13} that $\theta$ is a
quasi-isomorphism of $\mathbf{hom}_{j}(\Delta^{n-2},X)$. Let $\gamma$
be an inverse of $\theta$. Then $i^{\ast}(\gamma) \sim \alpha_{\ast}$ so that
$\alpha$ lifts along $i^{\ast}$.
\end{proof}

Here's the dual statement:

\begin{corollary}
Suppose given a commutative solid arrow diagram
\begin{equation*}
\xymatrix{
\Lambda^{n}_{n} \ar[r]^{\alpha} \ar[d] & X \ar[d]^{p} \\
\Delta^{n} \ar[r] \ar@{.>}[ur] & Y
}
\end{equation*}
where $n \geq 2$, $X$ and $Y$ are quasi-isomorphisms, $p$ is an inner fibration,
and the map $\alpha$ takes the $1$-simplex $n-1 \to n$ to a
quasi-isomorphism of $X$. Then the dotted arrow
lifting exists, making the diagram commute.
\end{corollary}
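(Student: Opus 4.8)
The plan is to deduce the corollary from the preceding Proposition by means of the opposite-simplicial-set involution, rather than repeating the combinatorial argument. Recall that the order-reversing bijections $[n] \to [n]$, $i \mapsto n-i$, assemble into an involution $X \mapsto X^{\mathrm{op}}$ of $s\mathbf{Set}$. This involution carries the inclusion $\Lambda^{n}_{k} \subset \Delta^{n}$ isomorphically onto $\Lambda^{n}_{n-k} \subset \Delta^{n}$, since it sends the face $d^{i}$ of $\Delta^{n}$ to the face $d^{n-i}$; because $0 < k < n$ if and only if $0 < n-k < n$, it follows that $p \mapsto p^{\mathrm{op}}$ takes inner fibrations to inner fibrations and quasi-categories to quasi-categories. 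Finally, one checks directly from the generators-and-relations description that $P(X^{\mathrm{op}}) = P(X)^{\mathrm{op}}$, and invertibility of a morphism is a self-dual condition, so a $1$-simplex of $X$ is a quasi-isomorphism precisely when the corresponding $1$-simplex of $X^{\mathrm{op}}$ is.

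First I would translate the data. Starting from the given solid arrow square with $\Lambda^{n}_{n} \to X$ over $\Delta^{n} \to Y$ (with $n \geq 2$, $X$ and $Y$ quasi-categories, $p$ an inner fibration, and $\alpha$ sending the edge $n-1 \to n$ to a quasi-isomorphism of $X$), apply $(-)^{\mathrm{op}}$ together with the canonical isomorphisms $(\Delta^{n})^{\mathrm{op}} \cong \Delta^{n}$ and $(\Lambda^{n}_{n})^{\mathrm{op}} \cong \Lambda^{n}_{0}$. This produces a solid arrow square with $\Lambda^{n}_{0} \to X^{\mathrm{op}}$ over $\Delta^{n} \to Y^{\mathrm{op}}$, in which $X^{\mathrm{op}}$ and $Y^{\mathrm{op}}$ are quasi-categories and $p^{\mathrm{op}}$ is an inner fibration. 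The order-reversing isomorphism identifies the edge $n-1 \to n$ of $\Delta^{n}$ with the edge $0 \to 1$ of $\Delta^{n}$, so the restricted map $\Lambda^{n}_{0} \to X^{\mathrm{op}}$ sends the edge $0 \to 1$ to a quasi-isomorphism of $X^{\mathrm{op}}$, by the self-duality noted above.

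Next I would invoke the Proposition for this translated square: all of its hypotheses have just been verified, so there is a map $\Delta^{n} \to X^{\mathrm{op}}$ making both triangles commute. Applying $(-)^{\mathrm{op}}$ once more yields a map $\Delta^{n} \to X$ that solves the original lifting problem.

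The argument is essentially bookkeeping, and the only point deserving care is the dictionary between the two pictures --- that $(\Lambda^{n}_{n})^{\mathrm{op}}$ is $\Lambda^{n}_{0}$ and not $\Lambda^{n}_{n}$, that the distinguished edge $n-1 \to n$ becomes the edge $0 \to 1$ under $i \mapsto n-i$, and that the class of quasi-isomorphisms is stable under $(-)^{\mathrm{op}}$. Alternatively, one could prove the corollary directly by running the proof of the Proposition verbatim, replacing $\Lambda^{1}_{0}$ by $\Lambda^{1}_{1}$ throughout and using the join decomposition $(\partial\Delta^{n-2} \ast \Delta^{1}) \cup (\Delta^{n-2} \ast \Lambda^{1}_{1}) \cong \Lambda^{n}_{n}$ in place of the one used there; but the opposite-category route avoids repeating that work.
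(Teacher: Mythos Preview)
Your argument is correct and matches the paper's approach exactly: the paper introduces this corollary with the phrase ``Here's the dual statement'' and gives no further proof, leaving the duality to the reader. You have simply made that duality precise via the involution $X \mapsto X^{\mathrm{op}}$, and your bookkeeping (that $(\Lambda^{n}_{n})^{\mathrm{op}} \cong \Lambda^{n}_{0}$, that inner fibrations and quasi-categories are preserved, that $P(X^{\mathrm{op}}) \cong P(X)^{\mathrm{op}}$ so quasi-isomorphisms are self-dual, and that the edge $n-1 \to n$ becomes $0 \to 1$) is all in order.

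One small remark on your closing alternative: rerunning the proof of the Proposition verbatim for $\Lambda^{n}_{n}$ would also require the ``other'' join function complex, with $r$-simplices given by maps $X \ast \Delta^{r} \to Y$ rather than $\Delta^{r} \ast X \to Y$, since the decomposition $\Delta^{n} \cong \Delta^{n-2} \ast \Delta^{1}$ puts the distinguished edge on the right. This is of course just the $(-)^{\mathrm{op}}$ of the paper's $\mathbf{hom}_{j}$, so nothing new is needed---but it is one more reason to prefer the clean dualization you gave first.
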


\begin{corollary}\label{cor 16}
Suppose that $X$ is a quasi-category and suppose that $n \geq 2$. Then
the liftings exist in the diagrams
\begin{equation*}
{\xymatrix{
\Lambda^{n}_{0} \ar[r]^{\alpha} \ar[d] & X \\
\Delta^{n} \ar@{.>}[ur]
}
}\quad
{\xymatrix{
\Lambda^{n}_{n} \ar[r]^{\beta} \ar[d] & X \\
\Delta^{n} \ar@{.>}[ur]
}
}
\end{equation*}
provided that $\alpha$ takes the simplex $0 \to 1$ to a
quasi-isomorphism of $X$, respectively that $\beta$ takes the simplex $n-1
\to n$ to a quasi-isomorphism of $X$.
\end{corollary}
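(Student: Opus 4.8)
The plan is to obtain this as the special case $Y = \ast$ of the Proposition and of its dual Corollary stated just above. The terminal simplicial set $\ast$ is a quasi-category, and the unique map $\pi\colon X \to \ast$ is an inner fibration precisely because $X$ is a quasi-category: the right lifting property of $\pi$ against the inner anodyne generators $\Lambda^m_k \subset \Delta^m$, $0 < k < m$, is exactly the lifting property of $X \to \ast$ that defines a quasi-category. So all the hypotheses of the Proposition are in force.

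Concretely, given $\alpha\colon \Lambda^n_0 \to X$ with $n \geq 2$ sending the edge $0 \to 1$ to a quasi-isomorphism of $X$, form the square with vertical maps the inclusion $\Lambda^n_0 \subset \Delta^n$ on the left and $p = \pi$ on the right, and with bottom edge the unique map $\Delta^n \to \ast$; this square commutes trivially, and the Proposition produces the dotted arrow $\Delta^n \to X$, which is precisely the required lift in the first diagram. For the second diagram, apply the dual Corollary in the same way, with $\beta\colon \Lambda^n_n \to X$ sending $n-1 \to n$ to a quasi-isomorphism.

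There is no genuine obstacle here: the combinatorial content --- transposing the $\Lambda^n_0$-horn filling problem into a one-dimensional lifting problem for a right fibration between function complexes of the form $\mathbf{hom}_j(\Delta^{n-2},X)$ via the join/function-complex adjunction, and then invoking Lemma \ref{lem 13} to see that the relevant edge is a quasi-isomorphism --- has already been carried out in the proof of the Proposition. The only things that need checking are the two bookkeeping facts above (that $\ast$ is a quasi-category and that $X \to \ast$ is an inner fibration), both immediate. One could instead prove Corollary \ref{cor 16} directly by running the argument of the Proposition with $Y = \ast$ throughout, but that would merely duplicate that proof, so specialization is the cleaner route.
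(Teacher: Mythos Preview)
Your proposal is correct and takes essentially the same approach as the paper: Corollary~\ref{cor 16} is stated without proof, as an immediate specialization of the preceding Proposition and its dual to the case $Y=\ast$, which is exactly what you do. The bookkeeping checks you mention (that $\ast$ is a quasi-category and $X\to\ast$ is an inner fibration) are indeed all that is needed.
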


\begin{corollary}\label{cor 17}
Suppose that $X$ is a quasi-category. Then we have the following:
\begin{itemize}
\item[1)] 
If $P(X)$ is a groupoid, then $X$ is a Kan complex. Thus, a
  quasi-category $X$ is a Kan complex if and only if $P(X)$ is a
  groupoid.
\item[2)]
Let $J(X) \subset X$ be the subobject consisting of those simplices 
$\sigma: \Delta^{n} \to X$ such that all composites
\begin{equation*}
\Delta^{1} \to \Delta^{n} \xrightarrow{\sigma} X
\end{equation*}
represent isomorphisms of $P(X)$. Then $J(X)$ is a Kan complex, and 
it is the maximal Kan subcomplex of $X$.
\end{itemize}
\end{corollary}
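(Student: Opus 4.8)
The plan is to prove 1) first and then bootstrap to 2). For 1): since $X$ is a quasi-category it already has the lifting property against inner horns, so it is enough to fill the outer horns $\Lambda^n_0\subset\Delta^n$ and $\Lambda^n_n\subset\Delta^n$. For $n=1$ this is automatic, since a map $\Lambda^1_0\to X$ just picks out a vertex $x$ and the degenerate edge $s_0(x)$ extends it (dually for $\Lambda^1_1$). For $n\ge 2$ I would use that, when $P(X)$ is a groupoid, every $1$-simplex of $X$ is invertible in $P(X)$ and hence is a quasi-isomorphism of $X$ --- this is precisely the observation recorded just before Lemma \ref{lem 13}. In particular the edge $0\to 1$ of any $\Lambda^n_0\to X$, and the edge $n-1\to n$ of any $\Lambda^n_n\to X$, is a quasi-isomorphism, so Corollary \ref{cor 16} supplies the required fillings and $X$ is a Kan complex. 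The reverse implication of the stated equivalence is Lemma \ref{lem 4}.

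For 2): I would first check that $J(X)$ is a simplicial subset, which is routine --- the defining condition passes to faces (an edge of a face of $\sigma$ is an edge of $\sigma$) and to degeneracies (the extra edges created by a degeneracy operator are identities, hence isomorphisms). Next I would show $J(X)$ is a quasi-category. Given an inner horn $\Lambda^n_k\to J(X)$ with $0<k<n$, extend it to $\sigma:\Delta^n\to X$ using that $X$ is a quasi-category, and then check $\sigma$ factors through $J(X)$. For $n\ge 3$ every edge $\{a,b\}$ of $\Delta^n$ lies in $d^i$ for each $i\notin\{a,b\}$, hence in some face with $i\ne k$, so every edge of $\sigma$ is already the image of an edge of the horn and is therefore an isomorphism of $P(X)$. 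For $n=2$ the only edge of $\Delta^2$ absent from $\Lambda^2_1$ is $0\to 2$, and $\sigma$ sends it to the $P(X)$-composite of the two horn edges $0\to 1$ and $1\to 2$, again an isomorphism. Thus $J(X)$ is a quasi-category.

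I would then prove that $P(J(X))$ is a groupoid, so that Part 1) applies to $J(X)$. It suffices to see each $1$-simplex $\alpha:x\to y$ of $J(X)$ is invertible in $P(J(X))$. Since $\alpha$ is invertible in $P(X)$ and $X$ is a quasi-category, there are a $1$-simplex $\beta:y\to x$ and $2$-simplices $\sigma,\sigma'$ of $X$ with the boundaries displayed above, witnessing $\beta\alpha=1_x$ and $\alpha\beta=1_y$ in $P(X)$. The edges occurring in $\sigma$ and $\sigma'$ are $\alpha$, $\beta$, $s_0(x)$, $s_0(y)$, and $\beta$ --- being the inverse of $\alpha$ in $P(X)$ --- is itself invertible in $P(X)$; so all of these edges are isomorphisms of $P(X)$, whence $\beta$, $\sigma$, $\sigma'$ all lie in $J(X)$. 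Read in $P(J(X))$, the $2$-simplices $\sigma,\sigma'$ exhibit $\beta$ as a two-sided inverse of $\alpha$, so $P(J(X))$ is a groupoid and $J(X)$ is a Kan complex by Part 1). Finally, for maximality, given any Kan subcomplex $K\subseteq X$, Lemma \ref{lem 4} shows $P(K)$ is a groupoid, so every edge of $K$ is invertible in $P(K)$ and hence, via the functor $P(K)\to P(X)$ induced by the inclusion, in $P(X)$; thus every simplex of $K$ lies in $J(X)$ and $K\subseteq J(X)$.

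The step I expect to be the main obstacle is showing $P(J(X))$ is a groupoid --- concretely, arranging for the inverse $\beta$ and the coherence $2$-simplices $\sigma,\sigma'$ of an invertible edge to live inside $J(X)$ rather than merely inside $X$. The key observation that unlocks this is that the quasi-isomorphism data attached to an edge which is invertible in $P(X)$ automatically has all of its edges invertible in $P(X)$; granting that, the remaining arguments are bookkeeping with horns, faces and degeneracies.
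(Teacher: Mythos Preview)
Your argument is correct and is essentially the proof the paper has in mind: Corollary \ref{cor 17} is stated without proof, immediately after Corollary \ref{cor 16}, and the intended argument is exactly to invoke Corollary \ref{cor 16} for the outer horns once every edge is a quasi-isomorphism. Your treatment of Part 2) via ``$J(X)$ is a quasi-category and $P(J(X))$ is a groupoid, now apply Part 1)'' is the natural bootstrap.

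One small stylistic difference: where you do an explicit case analysis on edges of $\Delta^{n}$ to see that an inner-horn filler $\sigma:\Delta^{n}\to X$ lands in $J(X)$, the paper would simply cite Lemma \ref{lem 6} (the isomorphism $P(\Lambda^{n}_{k})\cong P(\Delta^{n})$ for $0<k<n$), as it does later in the proof of Lemma \ref{lem 34}. That lemma packages your $n\geq 3$ edge-counting and your $n=2$ composition argument into a single statement, but the content is the same.
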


The subcomplex $J(X)$ is often called the {\it core} of the
quasi-category $X$.

\begin{corollary}\label{cor 18}
Suppose that $p: X \to Y$ is an inner fibration, where $X$ and $Y$ are
Kan complexes. Suppose also that $p$ has the path lifting
property. Then $p$ is a Kan fibration.
\end{corollary}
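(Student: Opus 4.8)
The plan is to check directly that $p$ has the right lifting property with respect to every horn inclusion $\Lambda^{n}_{k} \subset \Delta^{n}$ with $n \geq 1$ and $0 \leq k \leq n$, splitting the verification into three cases: the inner horns, the two $1$-dimensional horns, and the outer horns with $n \geq 2$. The inner horns $\Lambda^{n}_{k} \subset \Delta^{n}$ with $0 < k < n$ are handled at once, since $p$ is an inner fibration by hypothesis. The horns $\Lambda^{1}_{0} \subset \Delta^{1}$ and $\Lambda^{1}_{1} \subset \Delta^{1}$ are the inclusions of the two vertices of $\Delta^{1}$, and liftings against these are exactly what the path lifting property provides.

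For the outer horns with $n \geq 2$, the point is that the quasi-isomorphism hypothesis appearing in the Proposition above, and in its dual Corollary, is automatically satisfied in the present situation. Indeed, since $X$ is a Kan complex, $P(X)$ is a groupoid by Lemma \ref{lem 4}, so every $1$-simplex of $X$ is invertible in $P(X)$; since $X$ is in particular a quasi-category, it follows that every $1$-simplex of $X$ is a quasi-isomorphism. The simplicial set $Y$ is a Kan complex, hence also a quasi-category.

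Now suppose given a lifting problem for $\Lambda^{n}_{0} \subset \Delta^{n}$ with $n \geq 2$, consisting of a map $\alpha : \Lambda^{n}_{0} \to X$ together with a map $\Delta^{n} \to Y$ restricting to $p\alpha$ on $\Lambda^{n}_{0}$. The edge $0 \to 1$ belongs to $\Lambda^{n}_{0}$ as soon as $n \geq 2$, and $\alpha$ carries it to a quasi-isomorphism of $X$ by the previous paragraph; since $p$ is an inner fibration and $Y$ is a quasi-category, the Proposition above produces the desired lift $\Delta^{n} \to X$. The case of $\Lambda^{n}_{n} \subset \Delta^{n}$ with $n \geq 2$ is symmetric: the edge $n-1 \to n$ lies in $\Lambda^{n}_{n}$, its image is a quasi-isomorphism of $X$, and the dual Corollary above supplies the lift. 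Together with the inner and $1$-dimensional cases, this accounts for all horn inclusions, so $p$ is a Kan fibration.

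I do not anticipate a serious obstacle; the only point requiring care is the bookkeeping that the relevant edge ($0 \to 1$, respectively $n-1 \to n$) lies in the horn $\Lambda^{n}_{0}$ (respectively $\Lambda^{n}_{n}$) exactly when $n \geq 2$, which is precisely the range in which the Proposition applies, while the remaining horns --- the inner ones and the two $1$-dimensional ones --- fall outside that range and are disposed of by the inner fibration hypothesis and the path lifting property. (If the path lifting property is only taken to mean lifting of paths from one endpoint, the other endpoint can be recovered from the Proposition and its dual, using the invertibility of $1$-simplices in the Kan complexes $X$ and $Y$.)
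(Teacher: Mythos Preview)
Your proof is correct and follows essentially the same approach as the paper: inner horns are handled by the inner fibration hypothesis, the two $1$-dimensional outer horns by the path lifting property, and the outer horns with $n \geq 2$ by the Proposition and its dual, using that every $1$-simplex of the Kan complex $X$ is a quasi-isomorphism. The paper's argument is terser but structurally identical; your extra care in noting that the edges $0 \to 1$ and $n-1 \to n$ lie in the respective horns precisely when $n \geq 2$ is a helpful elaboration.
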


\begin{proof}
The lifting problems
\begin{equation*}
{\xymatrix{
\Lambda^{n}_{0} \ar[r] \ar[d] & X \ar[d]^{p} \\
\Delta^{n} \ar[r] \ar@{.>}[ur] & Y
}
}\quad
{\xymatrix{
\Lambda^{n}_{n} \ar[r] \ar[d] & X \ar[d]^{p} \\
\Delta^{n} \ar[r] \ar@{.>}[ur] & Y
}
}
\end{equation*}
have solutions for $n=1$ by assumption, and have solutions for $n>1$
by the results above, since every $1$-simplex of a Kan complex is a
quasi-isomorphism.
\end{proof}

Suppose that $X$ is a simplicial set, and write
$\mathbf{hom}_{I}(\Delta^{1},X)$ for the simplicial set whose
$n$-simplices are the maps $\alpha: \Delta^{1} \times \Delta^{n} \to
X$ such that all vertices $x$ of $\Delta^{n}$ determine composites
\begin{equation*}
\Delta^{1} \xrightarrow{(1,x)} \Delta^{1} \times \Delta^{n} \xrightarrow{\alpha} X,
\end{equation*}
which represent isomorphisms of the path category $P(X)$. Write $I =
B\pi(\Delta^{1}))$, and observe that $\pi(\Delta^{1})$ is the free
groupoid on the ordinal number $\mathbf{1}$, and is therefore the
groupoid that is freely generated by one non-trivial arrow $\eta: 0
\to 1$. The path
\begin{equation*}
\Delta^{1} \xrightarrow{\eta} I
\end{equation*}
induces a map
\begin{equation*}
\mathbf{hom}(I,X) \xrightarrow{\eta^{\ast}} \mathbf{hom}_{I}(\Delta^{1},X).
\end{equation*}

\begin{lemma}\label{lem 19}
Suppose that $X$ is a quasi-category and let $n > 0$. Then the inner fibration
\begin{equation*}
i^{\ast}: \mathbf{hom}(\Delta^{n},X) \to \mathbf{hom}(\partial\Delta^{n},X)
\end{equation*}
induced by the inclusion $i: \partial\Delta^{n} \subset \Delta^{n}$
creates quasi-isomorphisms.
\end{lemma}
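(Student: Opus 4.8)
The plan is to reduce the lemma to the fibrewise case that has already been established. Recall that we proved above that the induced map
\begin{equation*}
(i^{\ast},p_{\ast}): \mathbf{hom}_{j}(\Delta^{n},X) \to \mathbf{hom}_{j}(\partial\Delta^{n},X) \times_{\mathbf{hom}_{j}(\partial\Delta^{n},Y)} \mathbf{hom}_{j}(\Delta^{n},Y)
\end{equation*}
is a right fibration, and in particular that $\mathbf{hom}_{j}(\Delta^{n},X) \to \mathbf{hom}_{j}(\partial\Delta^{n},X)$ is a right fibration when $X$ is a quasi-category. The map $i^{\ast}: \mathbf{hom}(\Delta^{n},X) \to \mathbf{hom}(\partial\Delta^{n},X)$ of the statement is an inner fibration by Corollary \ref{cor 11}, so both its source and target are quasi-categories; thus Lemma \ref{lem 13} applies as soon as we exhibit $i^{\ast}$ as a right fibration, or more precisely, as soon as we can detect quasi-isomorphisms of $\mathbf{hom}(\Delta^{n},X)$ through a right fibration whose total space we understand.

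First I would set up the key input: a $1$-simplex $\alpha: \Delta^{1} \to \mathbf{hom}(\Delta^{n},X)$ is the same thing as a map $\Delta^{n} \times \Delta^{1} \to X$, and $\alpha$ is a quasi-isomorphism precisely when it is invertible in $P(\mathbf{hom}(\Delta^{n},X)) = P(\Delta^{n}) \times P(\Delta^{1})$-detecting terms — by Lemma \ref{lem 2}, $P(\mathbf{hom}(\Delta^{n},X))$ receives enough structure that invertibility of $\alpha$ can be tested after composing with the vertex inclusions $\Delta^{0} \to \Delta^{n}$. Concretely: the inclusions of the vertices $v$ of $\Delta^{n}$ assemble into a map $\mathbf{hom}(\Delta^{n},X) \to \prod_{v} X$, and I claim $\alpha$ is a quasi-isomorphism of $\mathbf{hom}(\Delta^{n},X)$ if and only if each composite $\Delta^{1} \xrightarrow{\alpha} \mathbf{hom}(\Delta^{n},X) \to X$ at a vertex $v$ is a quasi-isomorphism of $X$. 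One direction is formal (functoriality of $P$); the other direction is where the real content lies, and it should follow by using Corollary \ref{cor 16} to build the inverse $2$-simplices $\sigma,\sigma'$ for $\alpha$ out of those for its vertex restrictions, extending a horn $\Lambda^{N}_{0}$ (resp. $\Lambda^{N}_{N}$) over a suitable prism whose outer edge is the given quasi-isomorphism.

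Granting that criterion, the proof finishes quickly. Suppose $\alpha$ is a $1$-simplex of $\mathbf{hom}(\Delta^{n},X)$ whose image $i^{\ast}(\alpha)$ is a quasi-isomorphism of $\mathbf{hom}(\partial\Delta^{n},X)$; I must show $\alpha$ is a quasi-isomorphism. By the criterion applied to $\partial\Delta^{n}$, the restriction of $i^{\ast}(\alpha)$ to each vertex $v \in \partial\Delta^{n}$ is a quasi-isomorphism of $X$. But the vertices of $\Delta^{n}$ coincide with those of $\partial\Delta^{n}$ (here $n \geq 1$), so the restriction of $\alpha$ itself to each vertex of $\Delta^{n}$ is a quasi-isomorphism of $X$, and the criterion applied to $\Delta^{n}$ gives that $\alpha$ is a quasi-isomorphism. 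Hence $i^{\ast}$ creates quasi-isomorphisms.

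The main obstacle is the nontrivial direction of the vertex-detection criterion — assembling inverse $2$-simplices for a map $\Delta^{n} \times \Delta^{1} \to X$ from those for its vertex restrictions. The cleanest way to handle this is probably to avoid it: instead exhibit $i^{\ast}$ directly as (a base change of, or otherwise comparable to) the right fibration $\mathbf{hom}_{j}(\Delta^{n-1},X) \to \mathbf{hom}_{j}(\partial\Delta^{n-1},X)$ via the join/product adjunction trick already used in the proof of the Proposition above (the lifting problem for $\Lambda^{N}_{0} \subset \Delta^{N}$ against $i^{\ast}$ transposes into one for $\Lambda^{1}_{0} \subset \Delta^{1}$ against a right fibration), and then invoke Lemma \ref{lem 13} verbatim. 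Either route works; the transposition route is shorter but requires care in matching up the quasi-isomorphism hypothesis under the adjunction.
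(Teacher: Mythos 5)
There is a genuine gap here: both of your proposed routes defer or misplace exactly the step that carries the content of the lemma. Your main route reduces the statement to a vertex-detection criterion (a $1$-simplex of $\mathbf{hom}(\Delta^{n},X)$ is a quasi-isomorphism if and only if its restrictions to the vertices of $\Delta^{n}$ are quasi-isomorphisms of $X$). The nontrivial direction of that criterion is at least as strong as Lemma \ref{lem 19} itself --- in this paper's development it is a \emph{consequence} of Lemma \ref{lem 19}, via the corollary immediately following it (cofibrations that are bijections on vertices create quasi-isomorphisms, applied to $\sk_{0}\Delta^{n} \subset \Delta^{n}$) --- so invoking it is circular unless you prove it independently. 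You acknowledge this is ``where the real content lies'' and say it ``should follow by using Corollary \ref{cor 16} to build the inverse $2$-simplices \dots extending a horn over a suitable prism,'' but that is precisely the work the paper actually does: it constructs an extension of $(\sigma,(\ ,s_{0}(d_{1}\alpha),\alpha))$ from $(\partial\Delta^{n}\times\Delta^{2})\cup(\Delta^{n}\times\Lambda^{2}_{0})$ to $\Delta^{n}\times\Delta^{2}$ by a careful filtration of the nondegenerate $(n+2)$-simplices, using Lemma \ref{lem 8} for all but the last attachment and Corollary \ref{cor 16} only at the final step, and then closes with the ``left inverse implies monic implies two-sided inverse'' trick in the path category. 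None of that combinatorial argument appears in your proposal; it is asserted, not carried out. (Also, the aside identifying $P(\mathbf{hom}(\Delta^{n},X))$ via Lemma \ref{lem 2} is off: Lemma \ref{lem 2} computes $P$ of a product, not of a function complex.)

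Your fallback ``transposition'' route does not work as stated, because it conflates the product function complex $\mathbf{hom}$ with the join function complex $\mathbf{hom}_{j}$. The adjunctions are different: simplices of $\mathbf{hom}_{j}(K,X)$ are maps $\Delta^{r}\ast K \to X$, and the transposition used in the Proposition on $\Lambda^{n}_{0}$-lifting exploits the join decomposition $\Lambda^{n}_{0}=(\Lambda^{1}_{0}\ast\Delta^{n-2})\cup(\Delta^{1}\ast\partial\Delta^{n-2})$; a lifting problem against the product-hom map $i^{\ast}:\mathbf{hom}(\Delta^{n},X)\to\mathbf{hom}(\partial\Delta^{n},X)$ transposes instead to an extension problem over $\Delta^{N}\times\Delta^{n}$, not to $\Lambda^{1}_{0}\subset\Delta^{1}$ against a right fibration. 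Moreover $i^{\ast}$ is simply not a right fibration in general (take $n=1$ and $X=BC$ with $C$ not a groupoid: $\mathbf{hom}(\Delta^{1},BC)\to BC\times BC$ fails right lifting against $\Lambda^{1}_{1}\subset\Delta^{1}$), nor is it a base change of the join-hom right fibration, so Lemma \ref{lem 13} cannot be invoked ``verbatim.'' The comparison between join and product (the canonical map $\Delta^{m}\times K\times\Delta^{0}$-style quotient onto the join) is not an isomorphism, and transporting both the fibration property and the quasi-isomorphism hypothesis across it is exactly the delicate point that the paper's prism argument is designed to handle.
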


\begin{remark}
Lemma \ref{lem 19} implies that the $n$-simplices of
$\mathbf{hom}_{I}(\Delta^{1},X)$ for a quasi-category $X$ are exactly
the quasi isomorphisms $\Delta^{n} \times \Delta^{1} \to X$ of the
quasi-category $\mathbf{hom}(\Delta^{n},X)$, provided that $X$ is a
quasi-category.
\end{remark}


\begin{proof}[Proof of Lemma \ref{lem 19}]
Suppose that $\alpha: \Delta^{n} \times \Delta^{1} \to X$ is a morphism such that the composite
\begin{equation*}
\alpha_{\ast}: \partial\Delta^{n} \times \Delta^{1} \xrightarrow{i \times 1} \Delta^{n} \times \Delta^{1} \xrightarrow{\alpha} X
\end{equation*}
is a quasi-isomorphism of $\mathbf{hom}(\partial\Delta^{n},X)$.
Then there is a $1$-simplex $\beta: \partial\Delta^{n} \times
\Delta^{1} \to X$, and a $2$-simplex $\sigma: \partial\Delta^{n}
\times \Delta^{2} \to X$ of $\mathbf{hom}(\partial\Delta^{n},X)$ such that
\begin{equation*}
\partial(\sigma) = (d_{0}\sigma,d_{1}\sigma,d_{2}\sigma) 
= (\beta,s_{0}(d_{1}\alpha_{\ast}),\alpha_{\ast}).
\end{equation*}
We therefore have an induced map
\begin{equation*}
(\partial\Delta^{n} \times \Delta^{2}) \cup (\Delta^{n} \times \Lambda^{2}_{0}) 
\xrightarrow{(\sigma,(\ ,s_{0}(d_{1}\alpha),\alpha))} X
\end{equation*}
and I claim that it suffices to show that this map extends to a morphism 
$\theta: \Delta^{n} \times \Delta^{2} \to X$. 

If so, then $\beta' = d_{1}\sigma$ is a left inverse for $\alpha$ in
$P(\mathbf{hom}(\Delta^{n},X)$, and so every simplex $\alpha$ such
that $i^{\ast}(\alpha)$ is a quasi-isomorphism has a left inverse in
the path category, and is therefore monic. But then $\beta'$ is also
monic in the path category, so that
\begin{equation*}
\beta' \cdot \alpha \cdot \beta' = \beta'
\end{equation*}
forces $\alpha \cdot \beta' = 1$, so that $\alpha$ is a quasi-isomorphism.

We must therefore solve the extension problem
\begin{equation*}
\xymatrix@C=60pt{
(\partial\Delta^{n} \times \Delta^{2}) \cup (\Delta^{n} \times \Lambda^{2}_{0}) 
\ar[r]^-{(\sigma,(\ ,s_{0}(d_{1}\alpha),\alpha))} \ar[d]  & X \\
\Delta^{n} \times \Delta^{2} \ar@{.>}[ur]
}
\end{equation*}
We do this by using the ordering on the set of non-degenerate
simplices of $\Delta^{n} \times \Delta^{2}$ of dimension $n+2$ that
is developed in connection with the proof of Theorem \ref{th 45}
below.

A non-degenerate $(n+2)$-simplex
\begin{equation*}
(0,0) \to \dots \to (n,2)
\end{equation*}
is a path in the poset $\mathbf{n} \times \mathbf{2}$ with first
segment either $(0,0) \to (1,0)$ or $(0,0) \to (0,1)$. Let $S_{0}$ be the
set of non-degenerate $(n+2)$-simplices starting with $(0,0) \to
(1,0)$ and let $S'$ be those non-degenerate $(n+2)$-simplices starting
with $(0,0) \to (0,1)$. The path
\begin{equation*} 
P:\quad 
{\xymatrix{
& (1,2) \ar[r] & \dots \ar[r] & (n,2) \\ 
& (1,1) \ar[u] \\
(0,0) \ar[r] & (1,0) \ar[u] 
}}
\end{equation*}
is the minimal simplex of $S_{0}$, and the path
\begin{equation*}
Q_{n}:\quad
{\xymatrix{
&& (n,2) \\
(0,1) \ar[r] & \dots \ar[r] & (n,1) \ar[u] \\
(0,0) \ar[u]
}}
\end{equation*}
is the maximal simplex of $S'$.

If $T$ is a set of non-degenerate $(n+2)$-simplices, write
\begin{equation*}
(\Delta^{n} \times \Delta^{2})^{(T)} 
\end{equation*}
for the subcomplex of $\Delta^{n} \times \Delta^{2}$ that is generated
by the subcomplex
\begin{equation*}
(\partial\Delta^{n} \times \Delta^{2}) \cup (\Delta^{n} \times \Lambda^{2}_{0})
\end{equation*}
and the members of $T$.

All members $P'$ of $S_{0}$ have $d_{0}P'$ and $d_{n+2}P'$ in the subcomplex 
\begin{equation*}
(\Delta^{n} \times \Delta^{2})^{(\emptyset)} =  (\partial\Delta^{n} \times \Delta^{2}) \cup (\Delta^{n} \times \Lambda^{2}_{0}),
\end{equation*}
so the inclusion
\begin{equation*}
(\Delta^{n} \times \Delta^{2})^{(\emptyset)} \subset (\Delta^{n} \times \Delta^{2})^{(S_{0})}
\end{equation*}
is inner anodyne by Lemma \ref{lem 8}.

Suppose that $Q_{i}$ is the $(n+2)$-simplex
\begin{equation*}
\xymatrix{
&& (i,2) \ar[r] & \dots \ar[r] & (n,2) \\
(0,1) \ar[r] & \dots \ar[r] & (i,1) \ar[u] \\
(0,0) \ar[u] 
}
\end{equation*}
and write 
\begin{equation*}
S'_{i} = S \cup \{ Q_{0}, \dots , Q_{i} \}.
\end{equation*}

Suppose first that $i < n$.

The face $d_{0}Q_{i}$ has faces $d_{0}d_{0}Q_{i}$ and
$d_{n+1}d_{0}Q_{i}$ in the the image of the boundary subcomplex
$\partial (\Delta^{n} \times \Delta^{1})$ under the map $1 \times
d^{0}: \Delta^{n} \times \Delta^{1} \to \Delta^{n} \times \Delta^{2}$,
and the intersection of the boundary of $d_{0}Q_{i}$ with $(\Delta^{n}
\times \Delta^{2})^{(S'_{i-1})}$ is missing the face
$d_{i+1}d_{0}Q_{i}$.

If $i < n$, then $d_{i+2}Q_{i}$ is an interior simplex and it is only
a face of $Q_{i}$ and $Q_{i+1}$, so that $d_{i+2}Q_{i}$ is not in the
subcomplex
\begin{equation*}
(\Delta^{n} \times \Delta^{2})^{(S'_{i-1})}.
\end{equation*}

It follows that the inclusion
\begin{equation*}
(\Delta^{n} \times \Delta^{2})^{(S'_{i-1})} \subset 
(\Delta^{n} \times \Delta^{2})^{(S'_{i})}
\end{equation*}
is a sequence of inner anodyne extensions that is achieved by first attaching
$d_{0}Q_{i}$ and then attaching $Q_{i}$ if $i<n$. 

Thus, there is a sequence of inner anodyne extensions
\begin{equation*}
(\Delta^{n} \times \Delta^{2})^{(\emptyset)} \subset (\Delta^{n} \times \Delta^{2})^{(S_{0})} \subset (\Delta^{n} \times \Delta^{2})^{(S'_{i})}
\end{equation*}
if $i < n$. The ``last'' simplex $Q_{n}$ has 
\begin{equation*}
d_{i}Q_{n} \in (\Delta^{n} \times \Delta^{2})^{(S'_{n-1})}
\end{equation*}
if $i > 0$, and $d_{0}Q_{n}$ is not a face any simplex in $S'_{n-1}$
since it is a maximal non-degenerate simplex of $\Delta^{n} \times
\Delta^{1}$.

The image of the $1$-simplex $(0,0) \to (0,1)$ under
the composite
\begin{equation*}
\alpha_{\ast}: \partial\Delta^{n} \times \Delta^{1} \xrightarrow{i \times 1} \Delta^{n} \times \Delta^{1} \xrightarrow{\alpha} X
\end{equation*}
is a quasi-isomorphism of $X$, since that composite is a
quasi-isomorphism of $\mathbf{hom}(\partial\Delta^{n},X)$. It follows
from Corollary \ref{cor 16} that the last of the extension problems
(the dotted arrow) in the list
\begin{equation*}
\xymatrix{
(\Delta^{n} \times \Delta^{2})^{(\emptyset)} \ar[d] \ar[r] & X \\
(\Delta^{n} \times \Delta^{2})^{(S)} \ar[d]  \ar[ur] \\
(\Delta^{n} \times \Delta^{2})^{(S'_{n-1})} \ar[d] \ar[uur] \\ 
\Delta^{n} \times \Delta^{2} \ar@{.>}[uuur]
}
\end{equation*}
can be solved.
\end{proof}

\begin{corollary}
Suppose that $i: A \to B$ is a cofibration of simplicial sets such
that $i$ is a bijection on vertices. Suppose that $X$ is a
quasi-category. Then the induced map
\begin{equation*}
i^{\ast}: \mathbf{hom}(B,X) \to \mathbf{hom}(A,X)
\end{equation*}
of quasi-categories creates quasi-isomorphisms.
\end{corollary}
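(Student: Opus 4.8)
The plan is to reduce the statement to the case of an inclusion $\partial\Delta^n \subset \Delta^n$, which is handled by Lemma \ref{lem 19}, by a standard cellular induction argument on the cofibration $i: A \to B$. First I would observe that, since $i$ is a bijection on vertices, any cofibration $i$ can be built from $A$ by attaching non-degenerate simplices of dimension $n \geq 1$ one at a time, i.e.\ $B$ is the colimit of a tower $A = B_{-1} \subset B_0 \subset B_1 \subset \dots$ where each $B_k$ is obtained from $B_{k-1}$ by a pushout along a coproduct of boundary inclusions $\partial\Delta^{n} \subset \Delta^{n}$ with $n \geq 1$. Applying $\mathbf{hom}(-,X)$ turns this into a tower of inner fibrations $\mathbf{hom}(B,X) \to \dots \to \mathbf{hom}(B_0,X) \to \mathbf{hom}(A,X)$, each stage of which is (a pullback of a product of) the map $i^{\ast}: \mathbf{hom}(\Delta^n,X) \to \mathbf{hom}(\partial\Delta^n,X)$ appearing in Lemma \ref{lem 19}, hence an inner fibration that creates quasi-isomorphisms.

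Next I would establish that the class of maps which ``create quasi-isomorphisms'' has the stability properties needed to run this induction. The key closure properties are: (i) a pullback of an inner fibration that creates quasi-isomorphisms again creates quasi-isomorphisms (this uses that a $1$-simplex in a pullback $U \times_V W$ is a quasi-isomorphism iff its images in $U$ and $W$ are, which follows since $P$ preserves products and, by Lemma \ref{lem 2} applied fibrewise, reflects invertibility coordinatewise); (ii) a product of maps that create quasi-isomorphisms creates quasi-isomorphisms, again via Lemma \ref{lem 2}; and (iii) a composite, and more generally an inverse limit of a tower, of inner fibrations that create quasi-isomorphisms creates quasi-isomorphisms. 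For (iii) one needs that a compatible system of quasi-isomorphisms in the tower lifts to a quasi-isomorphism upstairs: given $\alpha$ in $\mathbf{hom}(B,X)$ whose image in $\mathbf{hom}(A,X)$ is a quasi-isomorphism, one inductively lifts the inverse and the two witnessing $2$-simplices $\sigma, \sigma'$ through each stage $B_{k-1} \subset B_k$, using that each stage creates quasi-isomorphisms; the colimit over $k$ assembles these into the required data over $B$.

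Then the argument is: the bottom map $\mathbf{hom}(B_0, X) \to \mathbf{hom}(A,X)$ creates quasi-isomorphisms by Lemma \ref{lem 19} together with (i) and (ii) (it is a pullback of a product of copies of $i^{\ast}$ for $\partial\Delta^1 \subset \Delta^1$; note $n = 1$ is allowed in Lemma \ref{lem 19} since it only requires $n > 0$); inductively each $\mathbf{hom}(B_k,X) \to \mathbf{hom}(B_{k-1},X)$ creates quasi-isomorphisms for the same reason with $\partial\Delta^n \subset \Delta^n$; and finally (iii) gives that $\mathbf{hom}(B,X) \to \mathbf{hom}(A,X)$ creates quasi-isomorphisms. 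That the comparison maps are inner fibrations, hence that $\mathbf{hom}(B,X)$ and each $\mathbf{hom}(B_k,X)$ are quasi-categories so that Lemma \ref{lem 19} applies at each stage, follows from Corollary \ref{cor 11}.

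The main obstacle I anticipate is step (iii): carefully checking that the passage to the inverse limit of the tower does not lose control of the quasi-isomorphism witnesses. One must ensure the inductively chosen inverses and $2$-simplices at stage $k$ restrict correctly to those chosen at stage $k-1$ — which requires phrasing the lift at each stage as a genuine relative lifting problem over $\mathbf{hom}(B_{k-1},X)$ rather than a bare existence statement — and then that a cofinality/colimit argument produces the limiting data over $B = \colim_k B_k$. Everything else is a routine reduction, and the finite-composite case (which is all that is needed when $B$ is finite-dimensional over $A$) is already immediate from (i) and (ii) alone.
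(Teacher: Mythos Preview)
Your overall plan---reduce to Lemma \ref{lem 19} by a skeletal/cellular filtration of $A \subset B$, then check that ``creates quasi-isomorphisms'' is stable under the operations needed to climb the resulting tower of inner fibrations---is exactly the argument the paper has in mind (the Corollary is stated without proof, immediately after Lemma \ref{lem 19} and its Remark). So the strategy is correct and matches the intended approach.

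There is, however, a genuine gap in your justification of closure property (i). You assert that a $1$-simplex in a pullback $U \times_{V} W$ is a quasi-isomorphism iff its images in $U$ and $W$ are, and you cite Lemma \ref{lem 2}. But Lemma \ref{lem 2} is about \emph{products}, and $P$ does not preserve pullbacks; ``Lemma \ref{lem 2} applied fibrewise'' is not a meaningful reduction here. The statement you want is still true in your situation, but the proof is the one already used inside the paper: since $p:U\to V$ is an inner fibration between quasi-categories and the image $\alpha_{U}$ is a quasi-isomorphism, the Proposition preceding Corollary \ref{cor 16} lets you lift the $\Lambda^{2}_{0}$-data $(\,\ ,s_{0},\alpha_{U})$ over the witness $\sigma_{V}$ coming from $W$; pairing this lift with $\sigma_{W}$ produces a left inverse for $\alpha$ in $U\times_{V}W$. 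Now run the same argument on that left inverse (its image in $W$ is again a quasi-isomorphism) and conclude by the monic/left-inverse trick exactly as in the proof of Lemma \ref{lem 19}.

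The same correction applies to your sketch of (iii): you cannot coherently lift \emph{both} witnessing $2$-simplices $\sigma,\sigma'$ up the tower, only the $\Lambda^{2}_{0}$-one. What you actually do is lift compatible left-inverse witnesses $\sigma_{k}$ stage by stage (each lift is a $\Lambda^{2}_{0}$-extension as above, relative to the previously chosen $\sigma_{k-1}$), pass to the limit to get a left inverse $\beta$ for $\alpha$ in $\mathbf{hom}(B,X)$, and then repeat the whole construction with $\beta$ in place of $\alpha$ to see that $\beta$ is also left-invertible. Once you phrase (i) and (iii) this way, the rest of your outline goes through without change.
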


\begin{lemma}\label{lem 22}
Suppose that $X$ is a quasi-category, and suppose given
a diagram
\begin{equation*}
\xymatrix{
(\Delta^{n} \times \{ \epsilon \}) \cup (\partial\Delta^{n} \times \Delta^{1}) 
\ar[r]^-{(\beta,\alpha)} \ar[d] & X \\
\Delta^{n} \times \Delta^{1} \ar@{.>}[ur]
}
\end{equation*}
where $\alpha: \partial\Delta^{n} \times \Delta^{1} \to X$ is a
quasi-isomorphism of 
$\mathbf{hom}(\partial\Delta^{n},X)$, and $\epsilon$ is $0$ or
$1$. Then the indicated extension problem can be solved.
\end{lemma}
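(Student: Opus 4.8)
The plan is to assemble the prism $\Delta^{n}\times\Delta^{1}$ from the subcomplex $L=(\Delta^{n}\times\{\epsilon\})\cup(\partial\Delta^{n}\times\Delta^{1})$ by attaching its non-degenerate top-dimensional cells one at a time, arranging matters so that every attachment but one is inner anodyne and the last one is handled by Corollary \ref{cor 16}. We may assume $n\geq 1$; for $n=0$ one has $\partial\Delta^{n}\times\Delta^{1}=\emptyset$ and the required extension of a vertex to a $1$-simplex is a degeneracy.

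First I would fix the combinatorics of $\Delta^{n}\times\Delta^{1}$. Its non-degenerate $(n+1)$-simplices are the maximal chains $P_{0},\dots,P_{n}$ in $\mathbf{n}\times\mathbf{1}$, where $P_{j}$ takes its single vertical step in column $j$, i.e.\ $P_{j}=((0,0),\dots,(j,0),(j,1),\dots,(n,1))$. The chains $P_{j-1}$ and $P_{j}$ agree except in their $j$-th vertex, so they share the codimension-one face $D_{j}:=d_{j}P_{j-1}=d_{j}P_{j}$. A direct inspection of projections to $\Delta^{n}$ shows that the non-degenerate $n$-simplices of $\Delta^{n}\times\Delta^{1}$ which surject onto $\Delta^{n}$ are exactly $\Delta^{n}\times\{0\}=d_{n+1}P_{n}$, $\Delta^{n}\times\{1\}=d_{0}P_{0}$, and $D_{1},\dots,D_{n}$, and that every other non-degenerate simplex lies in $\partial\Delta^{n}\times\Delta^{1}\subset L$.

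Take $\epsilon=0$. I would attach the cells in the order $P_{n},P_{n-1},\dots,P_{1},P_{0}$. When $P_{j}$ is attached, for $1\leq j\leq n$, all faces $d_{i}P_{j}$ with $i\neq j$ are already present: the faces with $i\neq j,j+1$ (together with $d_{n+1}P_{n}=\Delta^{n}\times\{0\}$ when $j=n$) do not surject onto $\Delta^{n}$ and hence lie in $\partial\Delta^{n}\times\Delta^{1}\subset L$, while $d_{j+1}P_{j}=D_{j+1}$ was adjoined together with $P_{j+1}$ at the previous step; only $d_{j}P_{j}=D_{j}$ is missing. So $P_{j}$ is glued along the inner horn $\Lambda^{n+1}_{j}$, each such attachment is a pushout of an inner anodyne extension, and the partial map to $X$ extends over it since $X$ is a quasi-category. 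At the final step $P_{0}$ meets the subcomplex built so far exactly in $\Lambda^{n+1}_{0}$, because $d_{0}P_{0}=\Delta^{n}\times\{1\}$ is now its only missing face. The edge $0\to 1$ of this outer horn is the edge $(0,0)\to(0,1)$, which lies in $\{0\}\times\Delta^{1}\subset\partial\Delta^{n}\times\Delta^{1}$; it is therefore sent by $\alpha$ to the restriction, along the vertex $0$ of $\partial\Delta^{n}$, of the quasi-isomorphism $\alpha$ of $\mathbf{hom}(\partial\Delta^{n},X)$. Since restriction along a vertex is a simplicial map into the quasi-category $X$, it induces a functor of path categories, which preserves isomorphisms; as the quasi-isomorphisms of a quasi-category are exactly the morphisms invertible in its path category, this edge maps to a quasi-isomorphism of $X$, and Corollary \ref{cor 16} provides the extension over $P_{0}$. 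The case $\epsilon=1$ is dual: one attaches $P_{0},P_{1},\dots,P_{n}$ in that order, gluing $P_{j}$ for $j<n$ along the inner horn $\Lambda^{n+1}_{j+1}$ (whose only missing face is $D_{j+1}=d_{j+1}P_{j}$) and $P_{n}$ along the outer horn $\Lambda^{n+1}_{n+1}$ (whose only missing face is $\Delta^{n}\times\{0\}$), the edge $n\to n+1$ of $P_{n}$ being $(n,0)\to(n,1)\in\{n\}\times\Delta^{1}$, which again maps to a quasi-isomorphism of $X$; the last extension then comes from the $\Lambda^{n+1}_{n+1}$ clause of Corollary \ref{cor 16}.

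The main obstacle is the bookkeeping in the two inductions: one must check at each intermediate stage that the cell $P_{j}$ being adjoined meets the part of $\Delta^{n}\times\Delta^{1}$ already built in precisely the horn $\Lambda^{n+1}_{j}$ (resp.\ $\Lambda^{n+1}_{j+1}$) --- no smaller, which would force an appeal to Lemma \ref{lem 8}, and no larger --- which is exactly the face computation sketched above, together with the verification that the intersection of the closed cell $P_{j}$ with the subcomplex built so far is not all of $P_{j}$ nor $D_{j}$. The only step that is not purely formal is identifying the boundary edge of the final, outer horn as a quasi-isomorphism of $X$; every other stage is inner anodyne and is taken care of by the hypothesis that $X$ is a quasi-category.
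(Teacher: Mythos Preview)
Your proof is correct and follows essentially the same route as the paper's: the paper's simplices $h_{i}$ are your $P_{i}$, the paper adjoins $T=\{h_{1},\dots,h_{n}\}$ to $L$ by an inner anodyne extension and then fills the remaining $\Lambda^{n+1}_{0}$ via Corollary~\ref{cor 16}, exactly as you do. You have simply made the filtration explicit by specifying the order $P_{n},P_{n-1},\dots,P_{1}$ and checking at each stage that the attaching subcomplex is the inner horn $\Lambda^{n+1}_{j}$; the paper states the inner anodyne claim without this bookkeeping. One small expository wrinkle: your parenthetical about $d_{n+1}P_{n}=\Delta^{n}\times\{0\}$ reads as though that face fails to surject onto $\Delta^{n}$, whereas in fact it does surject but lies in $L$ for a different reason---it is the bottom copy $\Delta^{n}\times\{0\}$.
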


\begin{proof}
We'll suppose that $\epsilon = 0$. The case $\epsilon=1$ is similar
(or even dual).

Write $h_{i}$ for the non-degenerate $(n+1)$-simplex
\begin{equation*}
\xymatrix{
(0,0) \ar[r] & \dots \ar[r] & (i,0) \ar[d] \\
&&(i,1) \ar[r] & \dots \ar[r] & (n,1)
}
\end{equation*}
Let $T = \{ h_{1}, \dots ,h_{n} \}$ and write 
$(\Delta^{n} \times \Delta^{1})^{(T)}$
for the subcomplex of $\Delta^{n} \times \Delta^{1}$ generated by the
subcomplex 
\begin{equation*}
(\Delta^{n} \times \Delta^{1})^{(\emptyset)} =
(\Delta^{n} \times \{ 0 \}) \cup (\partial\Delta^{n}
\times \Delta^{1})
\end{equation*} 
and the simplices in $T$. Then the inclusion 
\begin{equation*}
(\Delta^{n} \times \Delta^{1})^{(\emptyset)} \subset (\Delta^{n} \times \Delta^{1})^{(S_{n-1})}
\end{equation*}
is inner anodyne and there is a pushout diagram
\begin{equation*}
\xymatrix{
\Lambda^{n+1}_{0} \ar[r] \ar[d] 
& (\Delta^{n} \times \Delta^{1})^{(T)} \ar[d] \\
\Delta^{n+1} \ar[r]_-{h_{0}} & \Delta^{n} \times \Delta^{1} 
}
\end{equation*}
In the diagram
\begin{equation*}
\xymatrix{
(\Delta^{n} \times \Delta^{1})^{(\emptyset)} \ar[r]^-{(\beta,\alpha)} \ar[d] 
& X \\
(\Delta^{n} \times \Delta^{1})^{(T)} \ar[ur]^{\gamma} \ar[d] \\
\Delta^{n} \times \Delta^{1} \ar@{.>}[uur]
}
\end{equation*}
the extension $\gamma$ takes the simplex $(0,0) \to (0,1)$ to a
quasi-isomorphism of $X$, so the indicated extension problem can be
solved by Corollary \ref{cor 16}.
\end{proof}

\begin{corollary}\label{cor 23}
Suppose that $X$ is a quasi-category. Then the map
\begin{equation*}
i^{\ast}: J(\mathbf{hom}(\Delta^{n},X)) \to J(\mathbf{hom}(\partial\Delta^{n},X))
\end{equation*}
is a Kan fibration for $n \geq 0$.
\end{corollary}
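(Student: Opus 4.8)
The plan is to verify the lifting property of a Kan fibration directly: given a horn inclusion $\Lambda^{k}_{l} \subset \Delta^{k}$ with $0 \le l \le k$, $k \ge 1$, and a commutative square with left vertical map this horn inclusion and right vertical map $i^{\ast}: J(\mathbf{hom}(\Delta^{n},X)) \to J(\mathbf{hom}(\partial\Delta^{n},X))$, I would produce a diagonal filler. First I would unwind the definition of the core $J(-)$ from Corollary \ref{cor 17}: a simplex of $J(\mathbf{hom}(\Delta^{n},X))$ is a simplex $\Delta^{k} \times \Delta^{n} \to X$ all of whose $\Delta^{1} \times \{x\}$-restrictions (for vertices $x$ of $\Delta^{k}$, thinking of the $\Delta^{k}$-direction) represent isomorphisms in $P(\mathbf{hom}(\Delta^{n},X))$, and similarly for the boundary version. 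So the lifting square amounts to a map
\begin{equation*}
(\Lambda^{k}_{l} \times \Delta^{n}) \cup (\Delta^{k} \times \partial\Delta^{n}) \to X,
\end{equation*}
where the horn-direction edges all represent isomorphisms of $P(\mathbf{hom}(\Delta^{n},X))$ (equivalently, by Lemma \ref{lem 19} and the subsequent Remark, are quasi-isomorphisms of $\mathbf{hom}(\Delta^{n},X)$), and I want to extend it over $\Delta^{k} \times \Delta^{n} \to X$ so that the new horn-direction edges still land in the core.

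The key step is to handle the two regimes of $l$ separately. If $0 < l < k$, the inclusion $(\Lambda^{k}_{l} \times \Delta^{n}) \cup (\Delta^{k} \times \partial\Delta^{n}) \subset \Delta^{k} \times \Delta^{n}$ is inner anodyne by Theorem \ref{thm 9}, so the extension exists against the quasi-category $X$; then one must check the filled simplex lands in the core, which follows because the $1$-simplices of $\Delta^k$ in the product already lie in $\Lambda^k_l$ (as $k \ge 2$ in this case and every edge of $\Delta^k$ is in at least two faces by the argument in Lemma \ref{lem 7}), so their images are unchanged. The genuinely new content is the outer horns $l = 0$ and $l = k$. Here I would reduce to the apparatus already built: the case $l = 0$, $k = 1$ is precisely Lemma \ref{lem 22} with $\epsilon = 1$ (and $l = 1$, $k = 1$ is Lemma \ref{lem 22} with $\epsilon = 0$), since the hypothesis that the edge lies in the core says exactly that $\alpha: \partial\Delta^{n} \times \Delta^{1} \to X$ is a quasi-isomorphism of $\mathbf{hom}(\partial\Delta^{n},X)$. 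For $l = 0$ with $k \ge 2$, I would use the same filtration technique as in the proof of Lemma \ref{lem 19}: order the non-degenerate $(n+k)$-simplices of $\Delta^{k} \times \Delta^{n}$ by their first edge, attach them one at a time, observe that all the attachments are inner-anodyne (Lemma \ref{lem 8}) except the very first one, which is a $\Lambda^{N}_{0}$-horn whose initial edge $(0,0) \to (1,0)$ maps (via the given data) to a quasi-isomorphism of $X$, and then invoke Corollary \ref{cor 16} to fill that outer horn. The case $l = k$ is dual.

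I would close by noting that at each stage the edges in the $\Delta^{k}$-direction that get newly filled are, up to the relations in $P(X)$, composites of old core edges, hence are again isomorphisms in $P(\mathbf{hom}(\Delta^{n},X))$; thus the whole extension factors through $J(\mathbf{hom}(\Delta^{n},X))$, and over $\partial\Delta^n$ it agrees with the prescribed map into $J(\mathbf{hom}(\partial\Delta^n,X))$, completing the lift. The main obstacle is the outer-horn case $l \in \{0, k\}$ with $k \ge 2$: one cannot appeal to inner anodyne-ness of the product inclusion (indeed the Remark after Theorem \ref{thm 9} shows it can fail), so one must build the explicit simplex-by-simplex filtration of $\Delta^{k} \times \Delta^{n}$ and arrange that exactly one outer horn appears, with its distinguished edge a quasi-isomorphism so that Corollary \ref{cor 16} applies — this is where the combinatorial bookkeeping from the proof of Lemma \ref{lem 19} has to be adapted, and checking that no \emph{interior} simplex of $\Delta^k$ ever becomes the distinguished edge of a badly-placed horn is the delicate point.
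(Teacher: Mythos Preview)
Your approach is sound in outline, but it differs substantially from the paper's and is more laborious. The paper does not verify horn-lifting case by case. Instead it argues in three short steps: (i) $i^{\ast}:\mathbf{hom}(\Delta^{n},X)\to\mathbf{hom}(\partial\Delta^{n},X)$ is an inner fibration (Theorem \ref{thm 9}) and has the path lifting property for quasi-isomorphisms (Lemma \ref{lem 22}); (ii) since $i^{\ast}$ \emph{creates} quasi-isomorphisms (Lemma \ref{lem 19}), the square comparing the $J(-)$ versions to the full function complexes is a pullback, so the restricted map $J(\mathbf{hom}(\Delta^{n},X))\to J(\mathbf{hom}(\partial\Delta^{n},X))$ is itself an inner fibration between Kan complexes with the path lifting property; (iii) Corollary \ref{cor 18} then finishes. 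The outer-horn lifting you work to build by hand is thus absorbed into the black box of Corollary \ref{cor 18} (whose proof in turn rests on the join-based Proposition preceding it, not on a product filtration).

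Two comments on your argument itself. First, a small slip in the inner-horn case: it is not true that every edge of $\Delta^{k}$ lies in $\Lambda^{k}_{l}$ when $k=2$ (the edge $0\to 2$ is exactly $d^{1}$, missing from $\Lambda^{2}_{1}$); Lemma \ref{lem 7} is about vertices, not edges. Your later ``composites of old core edges'' remark, which is really Lemma \ref{lem 6}, is what actually handles this. Second, your outer-horn case for $k\ge 2$ asks for a filtration of $(\Lambda^{k}_{0}\times\Delta^{n})\cup(\Delta^{k}\times\partial\Delta^{n})\subset\Delta^{k}\times\Delta^{n}$ with exactly one $\Lambda^{N}_{0}$-attachment; the paper only carries out that combinatorics for $k=1$ (Lemma \ref{lem 22}) and, in a related form, $k=2$ (Lemma \ref{lem 19}), so you would genuinely have to extend the bookkeeping. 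This is doable, but it is precisely the work the paper's pullback-plus-Corollary \ref{cor 18} route is designed to avoid.
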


\begin{proof}
The map $i^{\ast}: \mathbf{hom}(\Delta^{n},X) \to
\mathbf{hom}(\partial\Delta^{n},X)$ is an inner fibration by Theorem
\ref{thm 9}, and has the path lifting property for
quasi-isomorphisms by Lemma \ref{lem 22}. 

Lemma \ref{lem 19} says that $i^{\ast}$ creates quasi-isomorphisms. It
follows that the diagram
\begin{equation*}
\xymatrix{
J(\mathbf{hom}(\Delta^{n},X)) \ar[r] \ar[d]_{i^{\ast}} 
& \mathbf{hom}(\Delta^{n},X) \ar[d]^{i^{\ast}} \\
J(\mathbf{hom}(\partial\Delta^{n},X)) \ar[r]
& \mathbf{hom}(\partial\Delta^{n},X)
}
\end{equation*}
is a pullback. The map
\begin{equation*}
i^{\ast}: J(\mathbf{hom}(\Delta^{n},X)) \to J(\mathbf{hom}(\Delta^{n},X))
\end{equation*}
is thus an inner fibration between Kan complexes that has the path lifting
property, and is therefore a Kan fibration by Corollary \ref{cor 18}.
\end{proof}

\begin{proposition}\label{prop 24}
Suppose that $X$ is a quasi-category. Then the map
\begin{equation*}
\eta^{\ast}: \mathbf{hom}(I,X) \to \mathbf{hom}_{I}(\Delta^{1},X)
\end{equation*}
is a trivial Kan fibration.
\end{proposition}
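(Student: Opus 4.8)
The plan is to factor $\eta^\ast$ as a composite of maps that are already known to be trivial Kan fibrations, or to identify it directly with one via the machinery built up in Corollaries \ref{cor 16}, \ref{cor 23} and Lemma \ref{lem 22}. The key observation is that $I = B\pi(\Delta^1)$ is obtained from $\Delta^1$ by attaching cells along inner horns and along the "invertibility" $2$-simplices $\sigma,\sigma'$ of the sort appearing before the definition of quasi-isomorphism; more precisely, the inclusion $\Delta^1 = B\mathbf{1} \to B\pi(\Delta^1) = I$ should be analyzed as a sequence of pushouts. Since $\mathbf{hom}(-,X)$ turns colimits in the first variable into limits, $\eta^\ast$ will then be a composite of pullbacks of maps of the form $\mathbf{hom}(\Delta^n,X) \to \mathbf{hom}(\partial\Delta^n,X)$ and of the maps governed by Corollary \ref{cor 16}, and I would check that each such pullback lands inside the subcomplexes $J(-)$ of quasi-isomorphisms — which is exactly the content of Lemma \ref{lem 19} (creation of quasi-isomorphisms) — so that Corollary \ref{cor 23} applies.

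Concretely, first I would show $\eta^\ast$ is a Kan fibration. The target $\mathbf{hom}_I(\Delta^1,X)$ has $n$-simplices the quasi-isomorphisms $\Delta^n \times \Delta^1 \to X$ (by the Remark after Lemma \ref{lem 19}), and the source $\mathbf{hom}(I,X)$ has $n$-simplices the maps $\Delta^n \times I \to X$. A lifting problem for $\eta^\ast$ against $\Lambda^n_k \subset \Delta^n$ unwinds, after currying, to an extension problem: extend a map defined on $(\Delta^n \times \{\epsilon\}) \cup (\partial\Delta^n \times \Delta^1)$-type data together with a prescribed quasi-isomorphism to all of $\Delta^n \times \Delta^1$, subject to the extension itself still being "invertible." This is precisely the situation handled by Lemma \ref{lem 22} together with Corollary \ref{cor 23}: the map $J(\mathbf{hom}(\Delta^n,X)) \to J(\mathbf{hom}(\partial\Delta^n,X))$ is a Kan fibration, and $\mathbf{hom}_I(\Delta^1,X)$ is built from these $J(-)$'s. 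I would make this precise by exhibiting $\mathbf{hom}_I(\Delta^1,X)$ as (a simplicial set closely related to) $\mathbf{hom}(\Delta^1, J(X))$-type constructions and reducing to Corollary \ref{cor 23}.

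Second, for the triviality (i.e.\ right lifting against all $\partial\Delta^n \subset \Delta^n$), I would argue that $\eta^\ast$ induces an isomorphism on vertices and is a weak equivalence because $\eta: \Delta^1 \to I$ is, in the relevant sense, an inner-anodyne-up-to-invertibility map: attaching the invertibility $2$-simplices to $\Delta^1$ and then taking the $2$-coskeletal completion yields $I$, and each attachment is of a horn $\Lambda^m_k \subset \Delta^m$ with $0 < k < m$ or of the $\Lambda^m_0/\Lambda^m_m$-type where the relevant edge is sent to a quasi-isomorphism — so by Corollary \ref{cor 11}(1) and Corollary \ref{cor 16} the induced maps on $\mathbf{hom}(-,X)$ are trivial Kan fibrations, and $\eta^\ast$ is their composite.

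The main obstacle will be the bookkeeping in the first paragraph: giving a clean cell-by-cell description of the inclusion $\Delta^1 \hookrightarrow I = B\pi(\Delta^1)$ — i.e.\ organizing the non-degenerate simplices of $B\pi(\Delta^1)$ (it is $\cosk_2$ of a small groupoid with two objects and a contractible nerve) into a filtration whose successive stages are pushouts along inner horns or along the quasi-isomorphism-creating horns of Corollary \ref{cor 16}. Once that filtration is in hand, dualizing via $\mathbf{hom}(-,X)$ and invoking Lemma \ref{lem 19}, Lemma \ref{lem 22}, and Corollary \ref{cor 23} to see that everything stays within the cores $J(-)$ is the routine part. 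I would expect the argument to mirror the proof of Corollary \ref{cor 23} closely, with $I$ playing the role that $\Delta^1$ plays there.
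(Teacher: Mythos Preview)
Your approach misses the key simplification in the paper's proof, and the route you propose has a genuine obstruction.

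The paper does not analyze the cell structure of $I$ at all. Instead, it \emph{transposes} the lifting problem: a square
\[
\xymatrix{
\partial\Delta^{n} \ar[r] \ar[d] & \mathbf{hom}(I,X) \ar[d]^{\eta^{\ast}} \\
\Delta^{n} \ar[r] & \mathbf{hom}_{I}(\Delta^{1},X)
}
\]
is adjoint to a square
\[
\xymatrix{
\Delta^{1} \ar[r]^-{\alpha_{\ast}} \ar[d]_{\eta} & \mathbf{hom}(\Delta^{n},X) \ar[d]^{i^{\ast}} \\
I \ar[r] & \mathbf{hom}(\partial\Delta^{n},X).
}
\]
By Lemma~\ref{lem 19} the top edge $\alpha_{\ast}$ is a quasi-isomorphism, so the whole square factors through the cores $J(-)$; by Corollary~\ref{cor 23} the right vertical map on cores is a Kan fibration; and $\eta:\Delta^{1}\to I$ is a trivial cofibration in the \emph{standard} model structure (both source and target are contractible). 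That is the entire argument.

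Your plan to filter $\Delta^{1}\hookrightarrow I$ by pushouts along inner horns and the special outer horns of Corollary~\ref{cor 16} does not work as stated. The nerve $I=B\pi(\Delta^{1})$ has two non-degenerate simplices in every dimension, so the filtration is infinite, and---more seriously---it cannot be realized as a sequence of horn pushouts. Once you have attached the inverse edge and the two witnessing $2$-simplices, the higher simplices of $I$ are \emph{uniquely determined} by $2$-coskeletality; adding them is a $\partial\Delta^{n}\subset\Delta^{n}$ fill, not a $\Lambda^{n}_{k}\subset\Delta^{n}$ fill. (You noticed this yourself when you wrote ``taking the $2$-coskeletal completion,'' but coskeletal completion is not built from horn attachments.) Dualizing, the maps $\mathbf{hom}(\Delta^{n},X)\to\mathbf{hom}(\partial\Delta^{n},X)$ are not trivial fibrations in general, so your tower does not consist of trivial Kan fibrations. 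The transposition trick is precisely what lets the paper avoid ever confronting the cell structure of $I$: it uses only that $\eta$ is a standard weak equivalence, which is immediate.
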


\begin{proof}
The lifting problem
\begin{equation*}
\xymatrix{
\partial\Delta^{n} \ar[r] \ar[d] & \mathbf{hom}(I,X) \ar[d]^{\eta^{\ast}} \\
\Delta^{n} \ar[r]_-{\alpha} \ar@{.>}[ur] & \mathbf{hom}_{I}(\Delta^{1},X)
}
\end{equation*}
is isomorphic to the lifting problem
\begin{equation}\label{eq 1}
\xymatrix{
\Delta^{1} \ar[r]^-{\alpha_{\ast}} \ar[d]_{\eta} & \mathbf{hom}(\Delta^{n},X) \ar[d]^{i^{\ast}} \\
I \ar[r] \ar@{.>}[ur] & \mathbf{hom}(\partial\Delta^{n},X)
}
\end{equation}
The map $\alpha_{\ast}$ is a quasi-isomorphism by Lemma \ref{lem 19},
and so the diagram (\ref{eq 1}) factors through the diagram
\begin{equation*}
\xymatrix{
\Delta^{1} \ar[r]^-{\alpha_{\ast}} \ar[d]_{\eta} & J(\mathbf{hom}(\Delta^{n},X)) \ar[d]^{i^{\ast}} \\
I \ar[r] \ar@{.>}[ur] & J(\mathbf{hom}(\partial\Delta^{n},X))
}
\end{equation*}
The map $i^{\ast}$ is a Kan fibration by Corollary \ref{cor 23}.  The
map $\eta$ is a trivial cofibration in the standard model structure
for simplicial sets, so the lifting exists.
\end{proof}

\section{The quasi-category model structure}

Let $I = B(\pi(\Delta^{1}))$ represent an interval theory on simplicial
sets in the sense of \cite{J40}. In particular, recall that the assignment 
\begin{equation*}
\mathcal{P}(\underline{n}) \mapsto
\square^{n} = I^{\times n}
\end{equation*}
defines a functor $I^{\bullet}: \mathbf{\square} \to s\mathbf{Set}$, giving a
representation of the box category $\mathbf{\square}$ in simplicial
sets.

The existence of two distinct maps $0,1: \Delta^{0} \to I$ is part of
the structure of an interval theory, and in this case these maps are
given by the the two objects $0,1$ of the groupoid
$\pi(\Delta^{1})$. An $I$-homotopy $f \sim_{I} g$ for an
interval $I$ is a commutative diagram
\begin{equation*}
\xymatrix@R=12pt{
K  \ar[d]_{(1_{K},0)} \ar[dr]^{f} \\
K \times I \ar[r]^{h} & X \\
K \ar[u]^{(1_{K},1)} \ar[ur]_{g}
}
\end{equation*}
Write $\pi_{I}(K,X)$ for the corresponding set of $I$-homotopy
classes of maps.

Following \cite{J40}, write $\partial
\square^{n}$ for the union of the faces 
\begin{equation*}
I^{\times (k-1)} \times \{
\epsilon \} \times I^{\times (n-k)}
\end{equation*}
of $\square^{n}$ in the category of simplicial sets. The subcomplex
$\sqcap^{n}_{(k,\epsilon)} \subset \partial \square^{n}$ is the result
of deleting the face $I^{\times (k-1)} \times \{ \epsilon \} \times
I^{\times (n-k)}$ from $\partial\square^{n}$.

The set $S$ of inner anodyne extensions $\Lambda^{n}_{k} \subset
\Delta^{n}$ and the interval $I$ together determine an $(I,S)$-model
structure on the category $s\mathbf{Set}$ of simplicial sets
\cite{J40}, for which the cofibrations are the monomorphisms, and the
fibrant objects are those simplicial sets $Z$ for which the map $Z \to
\ast$ has the right lifting property with respect all inclusions
\begin{equation}\label{eq 2}
(\Lambda^{n}_{k} \times \square^{m}) \cup (\Delta^{n}) \times \partial\square^{m}) \subset \Delta^{n} \times \square^{m}
\end{equation}
induced by inner horns $\Lambda^{n}_{k} \subset \Delta^{n}$, and with
respect to all maps
\begin{equation}\label{eq 3}
(\partial\Delta^{n} \times \square^{m}) \cup (\Delta^{n} \times \sqcap^{m}_{(k,\epsilon)}) \subset \Delta^{n} \times \square^{m}.
\end{equation}
A weak equivalence of the $(I,S)$-model structure is a map $X \to Y$ which induces an isomorphism
\begin{equation*}
\pi_{I}(Y,Z) \xrightarrow{\cong} \pi_{I}(X,Z)
\end{equation*} 
in $I$-homotopy classes of maps for all fibrant objects
$Z$. 

There is a
natural cofibration $j: X \to LX$, such that the map $j$ is in the
saturation of the set of maps described in (\ref{eq 2}) and 
(\ref{eq 3}), and $LX$ has the right lifting property with respect to all
such maps, and is therefore fibrant for the $(I,S)$-model
structure. The map $j: X \to LX$ is a fibrant model for the
$(I,S)$-model structure.

A map $X \to Y$ is a weak equivalence if and only if the
induced map $LX \to LY$ is an $I$-homotopy equivalence. 
\medskip

If $h: K \times I \to X$ is an $I$-homotopy taking values in a
quasi-category $X$ then the composite map
\begin{equation*}
K \times \Delta^{1} \xrightarrow{1_{K} \times \eta} K \times I \xrightarrow{h} X
\end{equation*}
is a quasi-isomorphism of quasi-category $\mathbf{hom}(K,X)$: the
requisite inverse and $2$-simplices are defined in $I$. Following
Joyal \cite{Joyal-quasi-cat}, let $\tau_{0}(K,X)$ denote the set of isomorphism
classes in $P(\mathbf{hom}(K,X))$. It follows that there is an induced
function
\begin{equation*}
\pi_{I}(K,X) \xrightarrow{\eta^{\ast}} \tau_{0}(K,X).
\end{equation*}
We have, from Proposition \ref{prop 24}, a trivial Kan fibration
\begin{equation*}
\eta^{\ast}: \mathbf{hom}(I,\mathbf{hom}(K,X)) \to \mathbf{hom}_{I}(\Delta^{1},\mathbf{hom}(K,X)).
\end{equation*}
The vertices of the space
$\mathbf{hom}_{I}(\Delta^{1},\mathbf{hom}(K,X))$ are the
quasi-isomor\-phisms of the quasi-category $\mathbf{hom}(K,X)$. The
trivial fibration $\eta^{\ast}$ is, among other things, surjective on
vertices, and so for every quasi-isomorphism $h: K \times \Delta^{1}
\to X$ there is an extension
\begin{equation*}
\xymatrix{
K \times \Delta^{1} \ar[r]^-{h} \ar[d]_{1_{K} \times \eta} & X \\
K \times I \ar@{.>}[ur]_{H}
}
\end{equation*}
Thus if there is a homotopy $h: K \times \Delta^{1} \to X$ from $f$ to
$g$ that is a quasi-isomorphism of $\mathbf{hom}(K,X)$, then there is
an $I$-homotopy $H: K \times I \to X$ from $f$ to $g$, and
conversely. We have proven the following:

\begin{proposition}\label{prop 25}
Suppose that $X$ is a quasi-category and that $K$ is a simplicial
set. Then precomposing with the map $\eta: \Delta^{1} \to I$ defines a
bijection
\begin{equation*}
\pi_{I}(K,X) \cong \tau_{0}(K,X).
\end{equation*}
This bijection is natural in simplicial sets $K$ and quasi-categories $X$.
\end{proposition}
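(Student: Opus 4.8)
The plan is to reduce everything to Proposition~\ref{prop 24}, applied to the quasi-category $Y := \mathbf{hom}(K,X)$; this is legitimate since $Y$ is a quasi-category by Corollary~\ref{cor 11}. First I would set up the bookkeeping: by the exponential law, maps $K \to X$ are the vertices of $Y$, simplicial homotopies $K \times \Delta^{1} \to X$ are the $1$-simplices of $Y$, and $I$-homotopies $K \times I \to X$ are the maps $I \to Y$, i.e.\ the vertices of $\mathbf{hom}(I,Y)$. Under these identifications $\tau_{0}(K,X) = \tau_{0}P(Y)$ is the set of vertices of $Y$ modulo the relation identifying $f$ with $g$ whenever some $1$-simplex of $Y$ joining $f$ to $g$ is invertible in $P(Y)$; and since $Y$ is a quasi-category, such an invertible $1$-simplex is precisely a quasi-isomorphism of $Y$, hence precisely a vertex of $\mathbf{hom}_{I}(\Delta^{1},Y)$ (by the definition of $\mathbf{hom}_{I}(\Delta^{1},-)$ together with the discussion preceding Lemma~\ref{lem 13}).

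Next I would check that $\eta$ induces a well-defined map $\pi_{I}(K,X) \to \tau_{0}(K,X)$, namely $[f] \mapsto [f]$. Given an $I$-homotopy $h: K \times I \to X$ from $f$ to $g$, the restriction $h \circ (1_{K} \times \eta): K \times \Delta^{1} \to X$ is a quasi-isomorphism of $Y$: its inverse $1$-simplex and the two witnessing $2$-simplices are the images under $h$ of the inverse of $\eta: 0 \to 1$ and of the relevant composition laws in the groupoid $\pi(\Delta^{1})$, transported through $I = B(\pi(\Delta^{1}))$. Hence $f$ and $g$ are isomorphic in $P(Y)$, so the assignment respects $I$-homotopy; and since ``isomorphic in $P(Y)$'' is an equivalence relation on the vertices of $Y$, it respects the equivalence relation generated by $I$-homotopy as well. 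Surjectivity is then immediate, since every object of $P(Y)$ is a vertex of $Y$, i.e.\ a map $K \to X$.

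For injectivity, suppose $f, g: K \to X$ have $[f] = [g]$ in $\tau_{0}(K,X)$, and choose a $1$-simplex $h$ of $Y$ representing an isomorphism $f \to g$ in $P(Y)$. As recalled above, $h$ is then a quasi-isomorphism of $Y$, hence a vertex of $\mathbf{hom}_{I}(\Delta^{1},Y)$. Proposition~\ref{prop 24} applied to $Y$ gives a trivial Kan fibration $\eta^{\ast}: \mathbf{hom}(I,Y) \to \mathbf{hom}_{I}(\Delta^{1},Y)$, which in particular is surjective on vertices, so $h$ lifts to a vertex $H$ of $\mathbf{hom}(I,Y)$, i.e.\ to an $I$-homotopy $H: K \times I \to X$ with $H \circ (1_{K} \times \eta) = h$. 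Because $\eta$ carries the two vertices of $\Delta^{1}$ to the two vertices $0,1$ of $I$, the restrictions of $H$ to $K \times \{0\}$ and $K \times \{1\}$ coincide with those of $h$, namely $f$ and $g$; so $H$ is an $I$-homotopy from $f$ to $g$ and $[f] = [g]$ in $\pi_{I}(K,X)$.

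Finally, naturality is routine: a simplicial set map $K' \to K$ and a map of quasi-categories $X \to X'$ induce $\mathbf{hom}(K,X) \to \mathbf{hom}(K',X')$, and the function complexes, the path-category functor, the map $\eta^{\ast}$, and precomposition with $\eta$ are all natural, so the resulting square of sets commutes. The essential input is Proposition~\ref{prop 24}; apart from that, the one point requiring care is the three-way translation between ``$I$-homotopic'', ``joined by a quasi-isomorphism of $\mathbf{hom}(K,X)$'', and ``isomorphic in $P(\mathbf{hom}(K,X))$'', and the check that the lift $H$ produced from the trivial fibration has the prescribed endpoints.
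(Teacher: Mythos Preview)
Your proof is correct and follows essentially the same approach as the paper's: reduce to $Y=\mathbf{hom}(K,X)$, observe that restriction along $\eta$ sends $I$-homotopies to quasi-isomorphisms of $Y$ (well-definedness), and then use the surjectivity on vertices of the trivial Kan fibration $\eta^{\ast}:\mathbf{hom}(I,Y)\to\mathbf{hom}_{I}(\Delta^{1},Y)$ from Proposition~\ref{prop 24} to lift any quasi-isomorphism back to an $I$-homotopy (injectivity). Your write-up is somewhat more explicit about the bookkeeping (Corollary~\ref{cor 11}, the endpoint check for $H$), but the argument is the same.
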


\begin{lemma}\label{lem 26}
Suppose that $X$ is a quasi-category. Then all extension problems
\begin{equation*}
\xymatrix{
(\partial\Delta^{m} \times \square^{n}) \cup (\Delta^{m} \times \sqcap^{n}_{(i,\epsilon)}) \ar[r] \ar[d] & X \\
\Delta^{m} \times \square^{n} \ar@{.>}[ur]
}
\end{equation*}
can be solved for $\epsilon = 0,1$. 
\end{lemma}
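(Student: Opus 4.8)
The plan is to transpose the extension problem through the function-complex adjunction, pass to the cores of the relevant quasi-categories, and finish by lifting a Kan fibration against a trivial cofibration. First I would observe that the inclusion in the statement is the pushout-product of the cofibration $i: \partial\Delta^{m} \subset \Delta^{m}$ with the inclusion $k: \sqcap^{n}_{(i,\epsilon)} \subset \square^{n}$, so that by the standard exponential-adjunction reformulation a solution of the displayed extension problem for a map into $X$ is the same thing as a solution of the lifting problem
\begin{equation*}
\xymatrix{
\sqcap^{n}_{(i,\epsilon)} \ar[r]^{g} \ar[d]_{k} & \mathbf{hom}(\Delta^{m},X) \ar[d]^{i^{\ast}} \\
\square^{n} \ar[r]_{f} \ar@{.>}[ur] & \mathbf{hom}(\partial\Delta^{m},X)
}
\end{equation*}
where $i^{\ast}$ is the inner fibration induced by $i$ (Corollary \ref{cor 11}).

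The second step is to pass to cores. The cube $\square^{n} = I^{\times n}$ has path category $P(\square^{n}) \cong P(I)^{\times n} \cong \pi(\Delta^{1})^{\times n}$ by Lemmas \ref{lem 1} and \ref{lem 2}, and this is a groupoid; hence every $1$-simplex of $\square^{n}$ represents an isomorphism of its path category, and functoriality of $P$ forces $f$ to take values in the core $J(\mathbf{hom}(\partial\Delta^{m},X))$. Similarly each of the $2n-1$ codimension-one faces of $\sqcap^{n}_{(i,\epsilon)}$ is a copy of $I^{\times(n-1)}$, with groupoid path category, so the restriction of $g$ to each such face factors through $J(\mathbf{hom}(\Delta^{m},X))$; since $\sqcap^{n}_{(i,\epsilon)}$ is the union of these faces, $g$ itself factors through $J(\mathbf{hom}(\Delta^{m},X))$. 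The square above thus restricts to a square whose right-hand vertical map is the Kan fibration
\begin{equation*}
i^{\ast}: J(\mathbf{hom}(\Delta^{m},X)) \to J(\mathbf{hom}(\partial\Delta^{m},X))
\end{equation*}
provided by Corollary \ref{cor 23} (in the degenerate case $m=0$ one uses instead that $J(X)$ is a Kan complex, Corollary \ref{cor 17}).

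It remains to check that $k: \sqcap^{n}_{(i,\epsilon)} \subset \square^{n}$ is a trivial cofibration of the standard model structure on simplicial sets; the dotted lift then exists, since Kan fibrations have the right lifting property with respect to trivial cofibrations, and the adjoint of this lift solves the original problem. The map $k$ is certainly a monomorphism, and $\square^{n} = I^{\times n}$ is a contractible Kan complex because $I = B(\pi(\Delta^{1}))$ is the nerve of a groupoid having a unique morphism between any two of its objects. Applying in the $i$-th coordinate of $I^{\times n}$ the natural transformation from $1_{\pi(\Delta^{1})}$ to the constant functor at the object $1-\epsilon$ yields a simplicial homotopy $\square^{n} \times \Delta^{1} \to \square^{n}$ exhibiting the face $I^{\times(i-1)} \times \{1-\epsilon\} \times I^{\times(n-i)} \cong I^{\times(n-1)}$ as a strong deformation retract of $\square^{n}$; since this homotopy alters only the $i$-th coordinate and fixes the value $1-\epsilon$ there, it maps $\sqcap^{n}_{(i,\epsilon)} \times \Delta^{1}$ into $\sqcap^{n}_{(i,\epsilon)}$, and hence restricts to a strong deformation retraction of $\sqcap^{n}_{(i,\epsilon)}$ onto that same face. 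Therefore $\sqcap^{n}_{(i,\epsilon)}$ is contractible, $k$ is a weak equivalence, and the argument runs identically for $\epsilon = 0$ and $\epsilon = 1$.

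I expect the one genuinely non-formal step to be the passage to cores. On the full function complexes $i^{\ast}$ is only an \emph{inner} fibration and $k$ is not inner anodyne --- for $n=1$ it is the inclusion $\{1-\epsilon\} \subset I$, which is not even a bijection on vertices --- so there is no direct lift. The real content lies in noticing that every map in the diagram already takes values in the cores, where Corollary \ref{cor 23}, and behind it the combinatorics of Theorem \ref{thm 9} and Lemma \ref{lem 8}, upgrades $i^{\ast}$ to an honest Kan fibration; against such a map the lift over the trivial cofibration $k$ is automatic.
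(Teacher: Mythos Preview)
Your proof is correct and follows essentially the same route as the paper: transpose via the exponential adjunction, observe that both maps land in the cores because $P(\square^{n})$ is a groupoid, invoke Corollary~\ref{cor 23} to get a Kan fibration between the cores, and lift against the trivial cofibration $\sqcap^{n}_{(i,\epsilon)} \subset \square^{n}$. Your justifications for the passage to cores and for the contractibility of $\sqcap^{n}_{(i,\epsilon)}$ are more explicit than the paper's, but the architecture is identical.
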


\begin{proof}
The extension problem in the statement of the Lemma can be rewritten
as a lifting problem
\begin{equation}\label{eq 4}
\xymatrix{
\sqcap^{n}_{(i,\epsilon)} \ar[r]^-{\alpha} \ar[d]_{j} & \mathbf{hom}(\Delta^{m},X) \ar[d]^{i^{\ast}} \\
\square^{n} \ar[r]_-{\beta} \ar@{.>}[ur] & \mathbf{hom}(\partial\Delta^{m},X)
}
\end{equation}
by adjointness. The maps $\alpha$ and $\beta$ both map $1$-simplices
to quasi-isomorphisms, so the lifting problem (\ref{eq 4}) factors
through a lifting problem
\begin{equation*}
\xymatrix{
\sqcap^{n}_{(i,\epsilon)} \ar[r]^-{\alpha} \ar[d]_{j} & J(\mathbf{hom}(\Delta^{m},X)) \ar[d]^{i^{\ast}} \\
\square^{n} \ar[r]_-{\beta} \ar@{.>}[ur] & J(\mathbf{hom}(\partial\Delta^{m},X))
}
\end{equation*}
The map $i^{\ast}$ in the diagram is a Kan fibration by Corollary \ref{cor 23},
 and the inclusion $\sqcap^{n}_{(i,\epsilon)} \subset
\square^{n}$ is a standard weak equivalence (of contractible spaces),
so that the dotted arrow exists.
\end{proof}

\begin{theorem}\label{thm 27}
Quasi-categories are the fibrant objects for the $(I,S)$-model
structure on simplicial sets that is determined by the set $S$ of
inner anodyne extensions and the interval theory defined by the space
$I=B(\pi(\Delta^{1}))$.
\end{theorem}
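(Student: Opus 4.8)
The plan is to verify both halves of the equivalence directly, using the description of the $(I,S)$-fibrant objects recalled above: a simplicial set $Z$ is fibrant for the $(I,S)$-model structure precisely when $Z \to \ast$ has the right lifting property with respect to the inclusions (\ref{eq 2}) (for $0 < k < n$) and the inclusions (\ref{eq 3}). One implication is immediate. If $Z$ is $(I,S)$-fibrant, specialize the family (\ref{eq 2}) to $m = 0$: since $\square^{0} = I^{\times 0} = \Delta^{0}$ and $\partial\square^{0} = \emptyset$, the inclusion in question is simply the inner horn inclusion $\Lambda^{n}_{k} \subset \Delta^{n}$ with $0 < k < n$, so $Z \to \ast$ has the right lifting property with respect to all inner horn inclusions, hence with respect to all inner anodyne extensions (their saturation), and $Z$ is a quasi-category.

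For the converse, suppose $X$ is a quasi-category; I must show $X \to \ast$ lifts against both families. The inclusion (\ref{eq 2}) is the pushout-product of the inner horn inclusion $\Lambda^{n}_{k} \subset \Delta^{n}$ with the monomorphism $\partial\square^{m} \subset \square^{m}$, so by the exponential adjunction a solution of the corresponding lifting problem against $X \to \ast$ is the same as a solution of the lifting problem of $\partial\square^{m} \subset \square^{m}$ against $i^{\ast} \colon \mathbf{hom}(\Delta^{n},X) \to \mathbf{hom}(\Lambda^{n}_{k},X)$ --- here the pullback-hom collapses to $i^{\ast}$ because the codomain of $X \to \ast$ is terminal. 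By Corollary \ref{cor 11}(1), since $\Lambda^{n}_{k} \subset \Delta^{n}$ is inner anodyne and $X$ is a quasi-category, $i^{\ast}$ is a trivial Kan fibration, so it has the right lifting property against every monomorphism, in particular against $\partial\square^{m} \subset \square^{m}$. This handles the family (\ref{eq 2}).

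For the family (\ref{eq 3}), observe that, after interchanging the roles of the two indices, these inclusions are exactly the ones appearing in the extension problems of Lemma \ref{lem 26}, and those problems are solvable because $X$ is a quasi-category. Hence $X \to \ast$ also lifts against (\ref{eq 3}), so $X$ is $(I,S)$-fibrant, and the equivalence follows.

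I do not expect a genuine obstacle at this stage: all of the substantive combinatorics has already been carried out in Theorem \ref{thm 9}, Lemma \ref{lem 19}, Lemma \ref{lem 22}, Corollary \ref{cor 23} and Lemma \ref{lem 26}, and the present theorem is essentially a bookkeeping consequence. The only points requiring care are getting the direction of the cartesian-closed adjunction right --- so that the pullback-hom against $X \to \ast$ reduces to $i^{\ast} \colon \mathbf{hom}(\Delta^{n},X) \to \mathbf{hom}(\Lambda^{n}_{k},X)$ rather than to $\mathbf{hom}(\square^{m},X) \to \mathbf{hom}(\partial\square^{m},X)$ --- and recognizing the family (\ref{eq 3}) as verbatim the content of Lemma \ref{lem 26}.
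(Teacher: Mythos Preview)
Your proof is correct and essentially the same as the paper's. The only cosmetic difference is in handling the family (\ref{eq 2}): the paper argues directly from Theorem~\ref{thm 9} that these inclusions are inner anodyne (so any quasi-category lifts against them), whereas you pass to the adjoint formulation via Corollary~\ref{cor 11}(1); since Corollary~\ref{cor 11} is itself derived from Theorem~\ref{thm 9}, the two arguments are equivalent. You also spell out the easy converse ($(I,S)$-fibrant implies quasi-category) which the paper leaves implicit.
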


\begin{proof}
If the inclusion $A \subset B$ is inner anodyne, then so are all inclusions
\begin{equation*}
(A \times \square^{n}) \cup (B \times \partial\square^{n}) \subset (B \times \square^{n}).
\end{equation*}
This is a consequence of Theorem \ref{thm 9}. 

If $X$ is a
quasi-category, then the map $X \to \ast$ has the right lifting
property with respect to all inclusions
\begin{equation*}
(\partial\Delta^{m} \times \square^{n}) \cup (\Delta^{m} \times \sqcap^{n}_{(i,\epsilon)})  \subset \Delta^{m} \times \square^{n}
\end{equation*}
by Lemma \ref{lem 26}. 
\end{proof}

Proposition \ref{prop 25}  says that the naive homotopy
classes of maps $\pi_{I}(K,X)$ taking values in a quasi-category $X$ for
the $(I,S)$-model structure for simplicial sets coincide up to natural
bijection with Joyal's set 
\begin{equation*}
\tau_{0}(K,X) = \pi_{0}J(\mathbf{hom}(K,X)),
\end{equation*} 
so that the fibrant objects, weak equivalences and cofibrations of the
$(I,S)$-structure coincide with the respective classes of maps in
Joyal's model structure for quasi-categories \cite{Joyal-quasi-cat}.
It follows that the $(I,S)$-structure for simplicial sets coincides
with Joyal's structure. 
\medskip

In particular, the weak equivalences and the fibrations for the
$(I,S)$-struc\-ture are the {\it weak categorical equivalences} and the
{\it pseudo-fibrations} of \cite{Joyal-quasi-cat}, respectively. We
shall continue to use these terms.

In particular, a weak categorical equivalence is a
simplicial set map $f: X \to Y$ such that the induced map
\begin{equation*}
f^{\ast}: \pi_{I}(Y,Z) \to \pi_{I}(X,Z)
\end{equation*}
is a bijection for all quasi-categories $Z$. 

Observe that a map $p: X \to Y$ is a weak categorical equivalence and
a pseudo-fibration if and only if it is a trivial fibration in the
standard model structure for simplicial sets.

The natural fibrant model $j_{X}: X \to LX$ for the $(I,S)$-model
structure also has a much simpler construction, such that $j$ is an
inner anodyne extension and $LX$ is a quasi-category.


\begin{lemma}\label{lem 28}
If $g: X \to Y$ is a categorical weak equivalence, then the induced map
$g_{\ast}: P(X) \to P(Y)$ is an equivalence of categories. 
\end{lemma}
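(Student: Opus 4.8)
The plan is to reduce the statement to the case of a categorical weak equivalence between quasi-categories, and then to extract an inverse functor together with natural isomorphisms from the homotopy-theoretic structure we have assembled. First I would observe that the functor $P$ sends any inner anodyne extension to an isomorphism of categories: by Lemma~\ref{lem 6} it does so on the generating inner horn inclusions, and $P$ preserves colimits (being a left adjoint), so it inverts the whole saturation. Applying the natural inner anodyne fibrant model $j_X : X \to LX$, we get $P(X) \xrightarrow{\cong} P(LX)$, and similarly for $Y$; since $LX$, $LY$ are quasi-categories and $Lg : LX \to LY$ is again a categorical weak equivalence (two-out-of-three), it suffices to treat the case where $X$ and $Y$ are quasi-categories.

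So assume $g : X \to Y$ is a categorical weak equivalence of quasi-categories. The key is that for quasi-categories a categorical weak equivalence is exactly an $I$-homotopy equivalence. Indeed $X$ and $Y$ are fibrant for the $(I,S)$-structure by Theorem~\ref{thm 27}, and a weak equivalence between fibrant objects in this model structure is an $I$-homotopy equivalence (this is the general fact about $(I,S)$-structures recorded just before Proposition~\ref{prop 25}, via the characterization of weak equivalences by $LX \to LY$ being an $I$-homotopy equivalence together with $j_X, j_Y$ being $I$-homotopy equivalences onto fibrant targets). Thus there is $h : Y \to X$ with $h g \sim_I 1_X$ and $g h \sim_I 1_Y$. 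Now apply $P$: we get functors $g_\ast : P(X) \to P(Y)$ and $h_\ast : P(Y) \to P(X)$, and I claim the $I$-homotopies induce natural \emph{isomorphisms} $h_\ast g_\ast \cong 1_{P(X)}$ and $g_\ast h_\ast \cong 1_{P(Y)}$. An $I$-homotopy is a map $K \times I \to X$ with $I = B(\pi(\Delta^1))$, so restricting along $\eta : \Delta^1 \to I$ gives an ordinary simplicial homotopy, and by Corollary~\ref{cor 3}(1) (together with Lemma~\ref{lem 2}, $P(I) = P(B\pi(\Delta^1)) \cong \pi(\Delta^1) = \mathbf{1}$-with-an-isomorphism) an $I$-homotopy $K \times I \to X$ induces a natural transformation $P(K) \times G \to P(X)$ where $G$ is the groupoid $\pi(\Delta^1)$; since $G$ has an isomorphism between its two objects, evaluating at that isomorphism produces a natural \emph{isomorphism} between the two induced functors. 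Applying this to the two $I$-homotopies above (with $K = X$ and $K = Y$ respectively, and using $P(1_X)_\ast = 1_{P(X)}$, functoriality $P(hg) = h_\ast g_\ast$) gives the required natural isomorphisms, so $g_\ast$ is an equivalence of categories.

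The main obstacle I anticipate is the passage ``categorical weak equivalence between quasi-categories $\Rightarrow$ $I$-homotopy equivalence''. The cleanest route is to cite the general $(I,S)$-model-structure fact from \cite{J40} that was invoked to define weak equivalences: $f : X \to Y$ is a weak equivalence iff $Lf : LX \to LY$ is an $I$-homotopy equivalence, and the fibrant models $j_X, j_Y$ are themselves $I$-homotopy equivalences \emph{because their targets are fibrant and they are trivial cofibrations}, so when $X, Y$ are already fibrant, $j_X$ and $j_Y$ admit $I$-homotopy inverses and one assembles an $I$-homotopy inverse for $f$ itself by the usual two-out-of-six/retract-of-homotopy-equivalences argument in a model category. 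One should double-check that the interval $I$ supports the concatenation and inversion of $I$-homotopies needed for this assembly — but $I = B(\pi(\Delta^1))$ is the nerve of a groupoid and is itself an interval object with all the requisite co-groupoid structure, so this is routine. Everything else in the argument is bookkeeping with the adjunction $P \dashv B$ and the already-established lemmas.
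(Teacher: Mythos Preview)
Your proposal is correct and follows essentially the same route as the paper: reduce to quasi-categories via the inner anodyne fibrant model (using Lemma~\ref{lem 6} and that $P$ is a left adjoint), observe that a categorical weak equivalence between quasi-categories is an $I$-homotopy equivalence, and then use $P(K \times I) \cong P(K) \times \pi(\Delta^{1})$ to turn $I$-homotopies into natural isomorphisms. The paper's proof is terser but hits exactly these beats; your additional care about the saturation argument and about why $I$-homotopy equivalence follows for fibrant objects is well placed and fills in what the paper leaves implicit.
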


\begin{proof}
There is a commutative diagram
\begin{equation*}
\xymatrix{
X \ar[r]^{j_{X}} \ar[d]_{g} & LX \ar[d]^{g_{\ast}} \\
Y \ar[r]_{j_{Y}}& LY
}
\end{equation*}
where $j_{X}$ and $j_{Y}$ are inner anodyne extensions and $LX$ and
$LY$ are quasi-categories. The map $g_{\ast}$ is a categorical weak
equivalence of quasi-categories, and is therefore an $I$-homotopy
equivalence.  It follows that the induced map
\begin{equation*}
g_{\ast}: P(LX) \to P(LY)
\end{equation*}
is an equivalence of categories. 

In effect, the functor $X \mapsto P(X)$ preserves finite products and
$P(I) \cong \pi(\Delta^{1})$. Finally, Lemma \ref{lem 6} implies that
the inner anodyne extensions $i_{X},i_{Y}$ induce isomorphisms $P(X)
\cong P(X_{f})$ and $P(Y) \cong P(Y_{f})$.
\end{proof}

We also have the following, by a very similar argument:

\begin{lemma}\label{lem 29}
Every categorical weak equivalence is a standard weak equivalence
of simplicial sets.
\end{lemma}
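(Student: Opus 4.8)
The plan is to mimic the proof of Lemma \ref{lem 28}, replacing the path-category functor $P$ by a functor that detects standard weak equivalences, and replacing "$I$-homotopy equivalence of quasi-categories" by "standard weak equivalence". First I would form the commutative square
\begin{equation*}
\xymatrix{
X \ar[r]^{j_{X}} \ar[d]_{g} & LX \ar[d]^{g_{\ast}} \\
Y \ar[r]_{j_{Y}}& LY
}
\end{equation*}
where $j_X,j_Y$ are inner anodyne extensions and $LX,LY$ are quasi-categories. Inner anodyne extensions are in particular anodyne extensions (the saturation of the inner horn inclusions is contained in the saturation of all horn inclusions), hence $j_X$ and $j_Y$ are standard trivial cofibrations, so it suffices to prove that $g_\ast\colon LX\to LY$ is a standard weak equivalence. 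By Lemma \ref{lem 28} (or directly from the argument of Lemma \ref{lem 28}) we know $g_\ast$ is an $I$-homotopy equivalence of quasi-categories: there is a map $h\colon LY\to LX$ and $I$-homotopies $h g_\ast\sim_I 1_{LX}$ and $g_\ast h\sim_I 1_{LY}$.

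The key step is then to observe that an $I$-homotopy is in particular a standard simplicial homotopy up to composing with the trivial cofibration $\eta\colon\Delta^1\to I$: given an $I$-homotopy $H\colon K\times I\to Z$, the composite $K\times\Delta^1\xrightarrow{1\times\eta}K\times I\xrightarrow{H}Z$ is a simplicial homotopy between the same two maps. Applying this to the two $I$-homotopies above shows that $g_\ast$ and $h$ are mutually inverse simplicial homotopy equivalences (in the standard sense) of simplicial sets. Every simplicial homotopy equivalence is a standard weak equivalence, so $g_\ast$ is a standard weak equivalence, and therefore $g$ is too by two-out-of-three applied to the square.

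The main obstacle is making precise the claim that $g_\ast$ is an $I$-homotopy equivalence of quasi-categories --- this is exactly the content extracted inside the proof of Lemma \ref{lem 28}, where one uses that $g_\ast$ is a categorical weak equivalence between fibrant objects for the $(I,S)$-structure, hence an $I$-homotopy equivalence by the general theory of $(I,S)$-model structures recalled in Section 3 (a weak equivalence between fibrant objects is an $I$-homotopy equivalence). Once that is granted, the remainder is the routine reduction $\eta^\ast$ from $I$-homotopies to simplicial homotopies together with the standard fact that simplicial homotopy equivalences are standard weak equivalences; no further combinatorics is needed, which is why the argument is "very similar" to that of Lemma \ref{lem 28}.
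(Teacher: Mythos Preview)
Your proposal is correct and follows essentially the same approach as the paper: reduce to the map $g_\ast$ between quasi-categories via inner anodyne (hence standard trivially cofibrant) replacements, and then use that $g_\ast$ is an $I$-homotopy equivalence to conclude it is a standard weak equivalence. The only cosmetic difference is that the paper phrases the last step as ``$I$ is contractible in the standard model structure, so every $I$-homotopy equivalence is a standard weak equivalence,'' whereas you make this explicit by precomposing with $\eta\colon\Delta^1\to I$ to get simplicial homotopies; these are the same argument.
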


\begin{proof}
The inner horn inclusions $\Lambda^{n}_{k} \subset \Delta^{n}$ are
standard weak equivalences, so that the inner anodyne extension
$j_{X}: X \to LX$ is a standard weak equivalence. The interval $I$ is
contractible in the standard model structure, and a map $f: Z \to W$
of quasi-categories is a weak equivalence if and only if it is an
$I$-homotopy equivalence, so that every $I$-homotopy equivalence is a
standard weak equivalence.
\end{proof}

\begin{lemma}\label{lem 30}
Suppose that $g: X \to Y$ is a categorical weak equivalence, and
that $K$ is a simplicial set. Then the map $g \times 1_{K}: X \times K
\to Y \times K$ is a categorical weak equivalence.
\end{lemma}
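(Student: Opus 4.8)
The plan is to deduce the statement from the defining property of a categorical weak equivalence --- that $g^{\ast}\colon \pi_{I}(Y,W) \to \pi_{I}(X,W)$ is a bijection for every quasi-category $W$ --- by passing to function complexes. Fix a quasi-category $Z$; it suffices to prove that $(g \times 1_{K})^{\ast}\colon \pi_{I}(Y \times K, Z) \to \pi_{I}(X \times K, Z)$ is a bijection. The exponential law supplies a natural isomorphism $\mathbf{hom}(A \times K, Z) \cong \mathbf{hom}(A, \mathbf{hom}(K,Z))$ of simplicial sets, and the evident isomorphism $(A \times K) \times I \cong (A \times I) \times K$, compatible with the two endpoint inclusions from $A \times K$, shows that an $I$-homotopy between maps $A \times K \to Z$ is the same datum as an $I$-homotopy between the adjoint maps $A \to \mathbf{hom}(K,Z)$. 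Hence there is a bijection $\pi_{I}(A \times K, Z) \cong \pi_{I}(A, \mathbf{hom}(K,Z))$, natural in $A$.

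The key external input is Corollary \ref{cor 11}(2): since $Z$ is a quasi-category and $K$ is an arbitrary simplicial set, $\mathbf{hom}(K,Z)$ is again a quasi-category, hence fibrant for the $(I,S)$-model structure by Theorem \ref{thm 27}. Because $g$ is a categorical weak equivalence, the induced map $g^{\ast}\colon \pi_{I}(Y,\mathbf{hom}(K,Z)) \to \pi_{I}(X,\mathbf{hom}(K,Z))$ is therefore a bijection.

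Finally, naturality of the isomorphism $\pi_{I}(A \times K, Z) \cong \pi_{I}(A, \mathbf{hom}(K,Z))$ in $A$ produces a commutative square whose bottom edge is $g^{\ast}$, whose top edge is $(g \times 1_{K})^{\ast}$, and whose two vertical edges are the bijections just described; since three edges are bijections, so is the fourth. As $Z$ ranged over all quasi-categories, $g \times 1_{K}$ is a categorical weak equivalence. This argument is essentially formal and I do not expect a genuine obstacle; the only step deserving a word of care is the compatibility of the exponential adjunction with $I$-homotopies, which comes down to the isomorphism $(A \times K) \times I \cong (A \times I) \times K$ and its behaviour on endpoints. (One could equally argue by fibrant replacement, using that $j_{X} \times 1_{K}$ and $j_{Y} \times 1_{K}$ are inner anodyne --- a pushout-product consequence of Theorem \ref{thm 9} --- together with the fact, from the proof of Lemma \ref{lem 28}, that $g_{\ast}\colon LX \to LY$ is an $I$-homotopy equivalence, whence so is $g_{\ast} \times 1_{K}$, and then $2$-out-of-$3$ for categorical weak equivalences.)
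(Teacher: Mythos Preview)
Your proof is correct and is essentially the paper's own argument: reduce to showing $(g\times 1_K)^{\ast}$ is a bijection on $\pi_I(-,Z)$ for every quasi-category $Z$, use the exponential law $\pi_I(A\times K,Z)\cong\pi_I(A,\mathbf{hom}(K,Z))$, and invoke Corollary~\ref{cor 11} to see that $\mathbf{hom}(K,Z)$ is a quasi-category so that $g^{\ast}$ is a bijection. Your additional remarks about the compatibility of the adjunction with $I$-homotopies and the alternative fibrant-replacement route are sound but not needed.
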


\begin{proof}
Suppose that $Z$ is a quasi-category. The exponential law induces a
natural bijection
\begin{equation*}
\pi_{I}(X,\mathbf{hom}(K,Z)) \cong \pi_{I}(X \times K,Z).
\end{equation*}
The function complex $\mathbf{hom}(K,Z)$ is a quasi-category by
Corollary \ref{cor 11}, so that the induced map
\begin{equation*}
g^{\ast}: \pi_{I}(Y,\mathbf{hom}(K,Z)) \to \pi_{I}(X,\mathbf{hom}(K,Z))
\end{equation*}
is a bijection.
\end{proof}

\begin{corollary}
Suppose that $i: A \to B$ is a cofibration 
and a categorical weak equivalence, and that $j: C \to D$ is a
cofibration. Then the cofibration
\begin{equation*}
(B \times C) \cup (A \times D) \subset B \times D
\end{equation*}
is a categorical weak equivalence.
\end{corollary}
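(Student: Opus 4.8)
The plan is to observe first that the map $(B \times C) \cup (A \times D) \subset B \times D$ is a monomorphism, hence a cofibration, so that the only content is that it is a categorical weak equivalence. To get that, I would realize this map as one edge of a commutative triangle whose other two edges are categorical weak equivalences, and then quote the two-out-of-three property of the $(I,S)$-model structure from Theorem \ref{thm 27}. The two inputs are Lemma \ref{lem 30}, which says that crossing a categorical weak equivalence with a fixed simplicial set again yields a categorical weak equivalence, together with the fact --- valid in any model category --- that trivial cofibrations are stable under cobase change.

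Concretely, I would form the pushout square
\begin{equation*}
\xymatrix{
A \times C \ar[r]^-{1_A \times j} \ar[d]_-{i \times 1_C} & A \times D \ar[d] \\
B \times C \ar[r] & (B \times C) \cup (A \times D)
}
\end{equation*}
(the pushout of the two inclusions of $A \times C$ into $B \times C$ and $A \times D$ inside $B \times D$ is precisely the union $(B \times C) \cup (A \times D)$). Here $i \times 1_C$ is a cofibration, and it is a categorical weak equivalence by Lemma \ref{lem 30} because $i$ is; so it is a trivial cofibration for the $(I,S)$-structure. Pushing out along $1_A \times j$, the right-hand vertical map $A \times D \to (B \times C) \cup (A \times D)$ is therefore again a trivial cofibration, in particular a categorical weak equivalence.

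Now I would factor the map $i \times 1_D : A \times D \to B \times D$ through the pushout as
\begin{equation*}
A \times D \longrightarrow (B \times C) \cup (A \times D) \longrightarrow B \times D.
\end{equation*}
The composite $i \times 1_D$ is a categorical weak equivalence by Lemma \ref{lem 30}, and the first map has just been shown to be one; two-out-of-three then forces the second map $(B \times C) \cup (A \times D) \to B \times D$ to be a categorical weak equivalence, which is exactly the claim.

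I do not expect a real obstacle: the argument is purely formal once Lemma \ref{lem 30} and Theorem \ref{thm 27} are available. The one step to handle with a little care is the stability of trivial cofibrations under pushout --- this is where the existence of the model structure (rather than merely an ad hoc manipulation of inner anodyne extensions) is genuinely used, since trivial cofibrations are characterized by a left lifting property against pseudo-fibrations and such classes are automatically closed under cobase change.
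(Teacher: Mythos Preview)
Your argument is correct and is exactly the standard deduction one expects here: the paper states this result as an unproved corollary of Lemma \ref{lem 30}, and your pushout-plus-two-out-of-three argument is the intended one. The only point worth flagging is cosmetic: you invoke Theorem \ref{thm 27} for the model structure, but in the paper's logical order the $(I,S)$-model structure (with its cofibrations, weak equivalences, and hence two-out-of-three and stability of trivial cofibrations under cobase change) is already in place from \cite{J40} before Theorem \ref{thm 27}; that theorem only identifies the fibrant objects as the quasi-categories, which you do not actually need here.
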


\begin{example}
A functor $f:C \to D$ of small categories induces a categorical
weak equivalence $f_{\ast}: BC \to BD$ if and only if the finctor $f$
is an equivalence of categories. 

In effect, $BC$ and $BD$ are
quasicategories, so that $f_{\ast}$ is a categorical weak
equivalence if and only if it is an $I$-homotopy equivalence. This means
that there is a functor $g: D \to C$ and homotopies $C \times
\pi(\Delta^{1}) \to C$ and $D \times \pi(\Delta^{1}) \to D$ which
define the homotopies $g \cdot f \simeq 1$ and $f \cdot g \simeq 1$.

It follows that the quasi-category $BC$ is weakly equivalent to a
point if and only if $C$ is a trivial groupoid.
\end{example} 

\begin{example}\label{ex 33}
Suppose that $C$ is a small category. Then $JBC$ is the nerve
$B(Iso(C))$ of the nerve of the groupoid of isomorphisms in $C$. The
inclusion $JBC \subset BC$ is a quasi-category equivalence if and only
if $C$ is a groupoid.

Thus, in general, the map $JX \subset X$ is not a quasi-category
equivalence for quasi-categories $X$.
\end{example}

\begin{lemma}\label{lem 34}
\begin{itemize}
\item[1)]
Suppose that $p: X \to Y$ is a pseudo-fibration of quasi-categories. Then the
induced map $p: J(X) \to J(Y)$ is a Kan fibration.
\item[2)]
Suppose that $p: X \to Y$ is a trivial fibration of quasi-categories. Then the
induced map $p: J(X) \to J(Y)$ is a trivial Kan fibration.
\item[3)] 
Suppose that $f: X \to Y$ is a categorical weak equivalence of
  quasi-categories. Then the induced map $f: J(X) \to J(Y)$ is a weak
  equivalence of simplicial sets.
\end{itemize}
\end{lemma}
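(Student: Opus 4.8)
The plan is to establish the three statements separately, using the machinery already assembled. Throughout I will use three facts about the core functor: it is a functor on simplicial sets (a map $g\colon X\to X'$ carries $J(X)$ into $J(X')$, since $g_{\ast}\colon P(X)\to P(X')$ preserves isomorphisms); it preserves finite products (immediate from the isomorphism $P(X\times Y)\cong P(X)\times P(Y)$ of Lemma \ref{lem 2}); and $J(I)=I$, because $I=B(\pi(\Delta^{1}))$ is the nerve of a groupoid and hence already a Kan complex. By Corollary \ref{cor 17}, $J(X)$ and $J(Y)$ are Kan complexes throughout statements 1) and 2).

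For statement 1), I would show that $J(p)\colon J(X)\to J(Y)$ is an inner fibration between Kan complexes with the path lifting property, and then invoke Corollary \ref{cor 18}. That $J(p)$ is an inner fibration is easy: given an inner horn $\Lambda^{n}_{k}\to J(X)$ (so $0<k<n$) and a compatible map $\Delta^{n}\to J(Y)$, fill the horn inside $X$ using that $p$ is an inner fibration; the filler $\sigma\colon\Delta^{n}\to X$ lands in $J(X)$ because for $n\ge 3$ every edge of $\Delta^{n}$ already lies in $\Lambda^{n}_{k}$, while for $n=2$ the one remaining edge $d_{1}\sigma$ represents the composite of two isomorphisms of $P(X)$. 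For the path lifting property, suppose $\beta\colon\Delta^{1}\to J(Y)$ is given together with a lift into $J(X)$ of one of its endpoints; regard $\beta$ as a quasi-isomorphism of $Y$ and extend it to a map $B\colon I\to Y$ with $B\eta=\beta$, using that $\eta^{\ast}\colon\mathbf{hom}(I,Y)\to\mathbf{hom}_{I}(\Delta^{1},Y)$ is surjective on vertices (Proposition \ref{prop 24}). The endpoint inclusion $\{\epsilon\}\hookrightarrow I$ is a monomorphism and a categorical weak equivalence, since both its source and target are categorically equivalent to a point ($\pi(\Delta^{1})$ being a trivial groupoid), hence it is a trivial cofibration for the model structure of Theorem \ref{thm 27}; since $p$ is a pseudo-fibration, $B$ lifts through $p$ relative to the chosen endpoint. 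Restricting this lift along $\eta$ produces a lift of $\beta$, which is a quasi-isomorphism of $X$ because $\eta$ is invertible in $P(I)$, so it factors through $J(X)$. This supplies the path lifting property, and Corollary \ref{cor 18} then gives statement 1).

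For statement 2), a trivial fibration of quasi-categories has the right lifting property against every monomorphism, so in particular it is a right fibration and therefore creates quasi-isomorphisms by Lemma \ref{lem 13}. To see that $J(p)$ is a trivial Kan fibration, solve a lifting problem $\partial\Delta^{n}\to J(X)$, $\Delta^{n}\to J(Y)$ by filling inside $X$ along $p$; the filler $\sigma$ lies in $J(X)$ because for $n\ge 2$ every edge of $\Delta^{n}$ already comes from $\partial\Delta^{n}$, for $n=1$ the lifted edge maps under $p$ to a quasi-isomorphism and is hence itself one by Lemma \ref{lem 13}, and $n=0$ amounts to surjectivity on vertices. For statement 3), $f$ is a categorical weak equivalence between quasi-categories, hence an $I$-homotopy equivalence; applying the product-preserving functor $J$ and using $J(I)=I$ turns an $I$-homotopy inverse of $f$ together with the defining $I$-homotopies into an $I$-homotopy inverse of $J(f)$, so $J(f)$ is an $I$-homotopy equivalence, and hence a standard weak equivalence as in the proof of Lemma \ref{lem 29}. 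Alternatively one could factor $f$ as a Joyal trivial cofibration followed by a pseudo-fibration and combine statements 1) and 2), but the direct argument is shorter.

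The hard part is the path lifting step in statement 1). This is the one place where the pseudo-fibration hypothesis is essential, and it rests on combining Proposition \ref{prop 24} (to promote a quasi-isomorphism out of $\Delta^{1}$ into a map out of $I$) with the fact that the endpoint inclusions $\{\epsilon\}\hookrightarrow I$ are trivial cofibrations for the Joyal structure. The remaining bookkeeping --- that the fillers of inner horns and of boundaries land in the core --- is routine once one is careful with the low-dimensional case $n=2$, where $\Lambda^{2}_{1}$ omits an edge of $\Delta^{2}$.
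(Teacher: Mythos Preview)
Your proof is correct. For statement 1) your argument is the same as the paper's, with only cosmetic differences: the paper phrases the ``filler lands in $J(X)$'' step via the isomorphism $P(\Lambda^{n}_{k})\cong P(\Delta^{n})$ of Lemma~\ref{lem 6} rather than your edge count, and it extends the edge $\Delta^{1}\to J(Y)$ to $I$ by using that $\eta$ is a standard trivial cofibration into the Kan complex $J(Y)$ rather than by quoting Proposition~\ref{prop 24}, but the content is identical.

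For statements 2) and 3) you take a genuinely different, though equally short, route. The paper observes that a trivial fibration creates quasi-isomorphisms, hence the square
\[
\xymatrix{
J(X) \ar[r] \ar[d]_{p} & X \ar[d]^{p} \\
J(Y) \ar[r] & Y
}
\]
is a pullback, so $J(p)$ is a trivial fibration by base change; it then deduces 3) formally from 2) by factoring $f$ as a section of a trivial fibration followed by a trivial fibration. You instead solve the boundary lifting problems for $J(p)$ by hand in 2), and prove 3) directly by noting that $J$ preserves finite products and fixes $I$, so it carries an $I$-homotopy inverse of $f$ to an $I$-homotopy inverse of $J(f)$. Your argument for 3) is pleasantly self-contained and does not rely on 2); the paper's pullback argument for 2) is a bit slicker since it avoids the dimension-by-dimension check. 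Either package works.
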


\begin{proof}
For statement 1), suppose given a commutative diagram
\begin{equation*}
\xymatrix{
\Lambda^{n}_{k} \ar[r] \ar[d] & J(X) \ar[r] \ar[d] & X \ar[d]^{p} \\
\Delta^{n} \ar[r] \ar@{.>}[urr]^{\theta} & J(Y) \ar[r] & Y
}
\end{equation*}
where $0 < k < n$. Then the dotted arrow $\theta$ exists since $p$ is
an inner fibration. The map $P(\Lambda^{n}_{k}) \to P(\Delta^{n})$ is
an isomorphism, so $\theta$ maps all $1$-simplices of $\Delta^{n}$ to
quasi-isomorphisms. Thus, $\theta$ factors through a map $\theta':
\Delta^{n} \to J(X)$, and so the map $p: J(X) \to J(Y)$ is an inner
fibration.

The map $p$ has the path lifting property in the sense that all
lifting problems
\begin{equation*}
\xymatrix{
\{ \epsilon \} \ar[r] \ar[d] & X \ar[d]^{p} \\
I \ar[r] \ar@{.>}[ur] & Y
}
\end{equation*}
can be solved, where $\epsilon = 0,1$. Any lift $I \to X$
has its image in $J(X)$, since $I$ is a Kan complex.  

Suppose given a
lifting problem
\begin{equation*}
\xymatrix{
\{ \epsilon \} \ar[r] \ar[d] & J(X) \ar[d]^{p} \\
\Delta^{1} \ar[r]_{\alpha} \ar@{.>}[ur] & J(Y)
}
\end{equation*}
Then there is a morphism $\alpha' : I \to J(Y)$ such that $\alpha'
\cdot \eta = \alpha$, where $\eta: \Delta^{1} \to I$ is the trivial
cofibration that we've been using, since $J(Y)$ is a Kan complex. It
follows that there is a commutative diagram
\begin{equation*}
\xymatrix{
\{ \epsilon \} \ar[rr] \ar[d] & & J(X) \ar[r] \ar[d]^{p} & X \ar[d]^{p} \\
\Delta^{1} \ar[r] & I \ar[r]_{\alpha'} \ar[ur] & J(Y) \ar[r] & Y
}
\end{equation*}
and so the map $p: J(X) \to J(Y)$ has the path lifting property.
The map $p$ is therefore a Kan fibration, by Corollary \ref{cor 18}. 

For statement 2), the trivial fibration 
$p: X \to Y$ creates quasi-isomorphisms, so that the diagram
\begin{equation*}
\xymatrix{
J(X) \ar[r] \ar[d]_{p} & X \ar[d]^{p} \\
J(Y) \ar[r] & Y
}
\end{equation*}
is a pullback. It follows that the map $p: J(X) \to J(Y)$ is a trivial
Kan fibration.

Statement 3) is a formal consequence of statement 2), by the usual
factorization trick: the map $f$ is a composite $f = q \cdot j$ where
$q$ is a pseudo-fibration and a categorical weak equivalence (a
trivial fibration), and $j$ is a section of a trivial fibration.
\end{proof}

\begin{remark}
Suppose that $f: X \to Y$ is a map of quasi-categories such that
$f_{\ast}: J(X) \to J(Y)$ is a weak equivalence of simplicial sets. It
does not follow that $f$ is a categorical weak equivalence.

For example, suppose that $C$ is a small category and recall that the
core $J(BC)$ of the nerve of $C$ is the nerve $B(\Iso(C))$ of the
groupoid of isomorphisms of $C$. The map $B(\Iso(C)) \to BC$ induces
an isomorphism of cores, but is not a categorical weak equivalence in
general, since $C$ may not be a groupoid. See also Example \ref{ex 33}.
\end{remark}

The fibrant model construction $j: X \to LX$ for the quasi-category
model structure is defined by a countable sequence of cofibrations
\begin{equation*}
X=X_{0} \to X_{1} \to X_{2} \to \dots
\end{equation*}
with $LX = \varinjlim_{i}\ X_{i}$. In all cases, $X_{i+1}$ is
constructed from $X_{i}$ by forming the pushout
\begin{equation*}
\xymatrix{
\bigsqcup_{\Lambda^{n}_{k} \to X_{i}}\ \Lambda^{n}_{k} \ar[r] \ar[d] & X_{i} \ar[d] \\
\bigsqcup_{\Lambda^{n}_{k} \to X_{i}}\ \Delta^{n} \ar[r] & X_{i+1}
}
\end{equation*}
where the disjoint union is indexed over all maps $\Lambda^{n}_{k} \to
X_{i}$ of inner horns to $X_{i}$. 

Suppose that $\alpha$ is a regular cardinal. It is a consequence of
the construction that if $X$ is $\alpha$-bounded, then $LX$ is
$\alpha$-bounded. Each of the functors $X \mapsto X_{i}$ preserves
monomorphisms and filtered colimits, and it follows that the functor
$X \mapsto LX$ has these same properties. If $A$ and $B$ are
subobjects of a simplicial set $X$, then there is an isomorphism
\begin{equation*}
L(A \cap B) \xrightarrow{\cong} LA \cap LB,
\end{equation*}
as subobjects of $LX$.

\begin{lemma}
Suppose that $i: X \to Y$ is a cofibration and a categorical weak
equivalence. Suppose that $A \subset Y$ is an $\alpha$-bounded
subobject of $Y$. Then there is an $\alpha$-bounded subobject $B$ of
$Y$ with $A \subset B$, such that the map $B \cap X \to B$ is a
categorical weak equivalence.
\end{lemma}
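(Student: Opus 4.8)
The plan is to use the functorial fibrant replacement $j\colon Z \to L(Z)$ of the quasi-category model structure to reduce the assertion to a statement about quasi-categories, and then to build $B$ as a countable increasing union of $\alpha$-bounded subobjects arranged so as to be closed under a fixed choice of deformation data. I will use freely the properties recorded above: $L$ preserves monomorphisms and filtered colimits, sends $\alpha$-bounded simplicial sets to $\alpha$-bounded simplicial sets, satisfies $L(C\cap D)\cong L(C)\cap L(D)$ for subobjects $C,D$ of a fixed simplicial set, and $j\colon Z\to L(Z)$ is an inner anodyne extension with $L(Z)$ a quasi-category.

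First I would record the quasi-category input. Since $L$ preserves monomorphisms and categorical weak equivalences satisfy two-out-of-three, the induced map $L(i)\colon L(X)\to L(Y)$ is a cofibration and a categorical weak equivalence between quasi-categories, i.e.\ a trivial cofibration between fibrant objects of the Joyal model structure. Hence there is a retraction $r\colon L(Y)\to L(X)$ with $r\cdot L(i)=1_{L(X)}$, together with an $I$-homotopy $h\colon L(Y)\times I\to L(Y)$ from $L(i)\cdot r$ to $1_{L(Y)}$ which is constant on $L(X)$; the homotopy $h$ is obtained by lifting the evident map on $(L(Y)\times\partial\square^{1})\cup(L(X)\times I)$ against the fibrant object $L(Y)$, the inclusion $(L(Y)\times\partial\square^{1})\cup(L(X)\times I)\subset L(Y)\times\square^{1}$ being a trivial cofibration by the pushout-product Corollary following Lemma~\ref{lem 30}.

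Next I would build $\alpha$-bounded subobjects $A=B_{0}\subseteq B_{1}\subseteq\cdots$ of $Y$. Given $B_{n}$, the subobject $L(B_{n})\subseteq L(Y)$ is $\alpha$-bounded, so $r(L(B_{n}))$ and $h(L(B_{n})\times I)$ are $\alpha$-bounded subobjects of $L(X)$ and of $L(Y)$ respectively ($I=B(\pi(\Delta^{1}))$ being a countable simplicial set). Since $L$ commutes with filtered colimits, $L(X)=\colim_{X'}L(X')$ and $L(Y)=\colim_{Y'}L(Y')$ over the $\alpha$-filtered posets of $\alpha$-bounded subobjects of $X$ and of $Y$; hence there are $\alpha$-bounded $X'\subseteq X$ and $Y'\subseteq Y$ with $r(L(B_{n}))\subseteq L(X')$ and $h(L(B_{n})\times I)\subseteq L(Y')$. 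Set $B_{n+1}=B_{n}\cup X'\cup Y'$ and $B=\bigcup_{n}B_{n}$. Then $A\subseteq B$, the subobject $B$ is $\alpha$-bounded (a countable increasing union of $\alpha$-bounded subobjects, $\alpha$ being regular and uncountable), and $L(B)=\bigcup_{n}L(B_{n})$ is closed under $r$ and under $h(-\times I)$ by construction. Using $L(B\cap X)\cong L(B)\cap L(X)$ one now checks that $r$ restricts to a retraction $L(B)\to L(B\cap X)$ of the inclusion $L(B\cap X)\subseteq L(B)$ (its image lies in $L(B)\cap L(X)$, and $r\cdot L(i)=1$), and that $h$ restricts to an $I$-homotopy $L(B)\times I\to L(B)$ from this composite to $1_{L(B)}$, constant on $L(B\cap X)$. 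Thus $L(B\cap X)\to L(B)$ is an $I$-homotopy equivalence between quasi-categories, hence a categorical weak equivalence; since the inner anodyne maps $j\colon B\cap X\to L(B\cap X)$ and $j\colon B\to L(B)$ are categorical weak equivalences, two-out-of-three gives that $B\cap X\to B$ is a categorical weak equivalence.

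The main obstacle is the bookkeeping that makes $L(B)$ simultaneously closed under $r$ and under the homotopy $h$: this is exactly what forces the iterative construction of $B$ rather than a single choice, and it hinges on $L$ preserving filtered colimits and $\alpha$-boundedness together with the $\alpha$-filteredness of the system of $\alpha$-bounded subobjects of $X$ and of $Y$. Everything else is a formal manipulation inside the Joyal model structure.
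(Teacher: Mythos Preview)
Your argument is correct, and it takes a genuinely different route from the paper's proof. The paper does not build explicit deformation data for $L(i)$; instead it chooses a \emph{functorial} factorization $L(X) \xrightarrow{j} Z \xrightarrow{p} L(Y)$ with $p$ a pseudo-fibration and $j$ a section of a trivial fibration, observes that this factorization is the filtered colimit of the corresponding factorizations $L(B\cap X)\to Z_{B}\to L(B)$ over the $\alpha$-bounded $B\subset Y$, and then builds the chain $A\subset B_{1}\subset B_{2}\subset\cdots$ so that every lifting problem $\partial\Delta^{n}\to Z_{B_{i}}$, $\Delta^{n}\to L(B_{i})$ is solved over $B_{i+1}$; in the limit $p_{B}$ is a trivial fibration. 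Your approach trades the factorization machinery for a single retraction $r$ and $I$-homotopy $h$ and then closes $L(B)$ under these two maps, which is more concrete and avoids appealing to the functoriality of the factorization. The paper's approach, on the other hand, is the standard template for bounded-cofibration lemmas (it is the argument of \cite[Lemma~4.9]{J40}) and transports verbatim to other localized model structures where no convenient deformation retraction is at hand. Both arguments rely on exactly the same properties of $L$ (preservation of monomorphisms, filtered colimits, $\alpha$-boundedness, and intersections), and both tacitly use that $\alpha$ is large enough that a countable union of $\alpha$-bounded subobjects remains $\alpha$-bounded.
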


This result is a consequence of the method of proof of Lemma 4.9 of
\cite{J40}. 

\begin{proof}
The map $i_{\ast}: LX \to LY$ is a filtered colimit of the maps $L(B \cap X) \to LB$, indexed over the $\alpha$-bounded subobjects $B$ of $Y$. All diagrams
\begin{equation*}
\xymatrix{
L(B \cap X) \ar[r] \ar[d] & LX \ar[d]^{i_{\ast}} \\
LB \ar[r] & LY
}
\end{equation*}
are pullbacks. Every map $f: Z \to W$ between quasi-categories has a
functorial factorization $f = p \cdot j$, where $p$ is a
pseudo-fibration and $j$ is a section of a trivial
fibration. For the map $LX \to LY$ this factorization
has the form
\begin{equation}\label{eq 5}
\xymatrix{
LX \ar[r]^{j} \ar[dr] & Z \ar[d]^{p} \\
& LY
}
\end{equation}
For each $\alpha$-bounded subobject $B \subset Y$, the
factorization of the map $L(B \cap X) \to LB$ can be written
\begin{equation}\label{eq 6}
\xymatrix{
L(B \cap X) \ar[r]^{j_{B}} \ar[dr] & Z_{B} \ar[d]^{p_{B}} \\
& LB
}
\end{equation}
The diagram (\ref{eq 5}) is a filtered colimit of diagrams 
(\ref{eq 6}). It follows that all lifting problems 
\begin{equation*}
\xymatrix{
\partial\Delta^{n} \ar[r] \ar[d] & Z_{A} \ar[d] \\
\Delta^{n} \ar[r] \ar@{.>}[ur] & LA
}
\end{equation*}
have solutions over some $LB_{1}$, where $B_{1}$ is an
$\alpha$-bounded subcomplex of $Y$ such that $A \subset
B_{1}$. Continue inductively, to produce a chain of $\alpha$-bounded
subobjects
\begin{equation*}
A \subset B_{1} \subset B_{2} \subset \dots
\end{equation*}
such that all lifting problems
\begin{equation*}
\xymatrix{
\partial\Delta^{n} \ar[r] \ar[d] & Z_{B_{i}} \ar[d] \\
\Delta^{n} \ar[r] \ar@{.>}[ur] & LB_{i}
}
\end{equation*}
have solutions over $LB_{i+1}$. 

Set $B= \varinjlim_{i}\ B_{i}$. Then the map $p_{B}: Z_{B} \to LB$ is
a trivial fibration, so the map $L(B \cap X) \to LB$ is a
quasi-category weak equivalence.
\end{proof}

\begin{lemma}\label{lem 37}
Suppose that $f: X \to Y$ is a categorical weak equivalence of
quasi-categories and that $A$ is a simplicial set. Then the map
\begin{equation*}
\mathbf{hom}(A,X) \to \mathbf{hom}(A,Y)
\end{equation*}
is a categorical weak equivalence of quasi-categories.
\end{lemma}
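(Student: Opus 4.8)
The plan is to reduce to the homotopy characterization of categorical weak equivalences between quasi-categories that is used in the proofs of Lemma \ref{lem 28} and Lemma \ref{lem 29}: a map of quasi-categories is a categorical weak equivalence if and only if it is an $I$-homotopy equivalence. Since $\mathbf{hom}(A,X)$ and $\mathbf{hom}(A,Y)$ are quasi-categories by Corollary \ref{cor 11}, it suffices to promote the given $I$-homotopy equivalence $f$ to an $I$-homotopy equivalence $f_{\ast}\colon \mathbf{hom}(A,X) \to \mathbf{hom}(A,Y)$.

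First I would unpack the hypothesis: $f$ being a categorical weak equivalence of quasi-categories, there is a map $g\colon Y \to X$ together with $I$-homotopies $H\colon X \times I \to X$ between $gf$ and $1_{X}$ and $K\colon Y \times I \to Y$ between $fg$ and $1_{Y}$. Next, using the cartesian closed structure of simplicial sets, I would transport $H$ to an $I$-homotopy
\begin{equation*}
\tilde{H}\colon \mathbf{hom}(A,X) \times I \to \mathbf{hom}(A,X),
\end{equation*}
defined as the adjoint of the composite
\begin{equation*}
\mathbf{hom}(A,X) \times I \times A \xrightarrow{\cong} (\mathbf{hom}(A,X) \times A) \times I \xrightarrow{\mathrm{ev} \times 1_{I}} X \times I \xrightarrow{H} X,
\end{equation*}
where $\mathrm{ev}$ is the evaluation map. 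Restricting $\tilde{H}$ along $(1,0)$ and $(1,1)$ and using that the adjoint of $\mathrm{ev}$ followed by a map $u\colon X \to X'$ is $u_{\ast}$, one checks that $\tilde{H}$ is an $I$-homotopy between $(gf)_{\ast} = g_{\ast}f_{\ast}$ and $1_{\mathbf{hom}(A,X)}$. Transporting $K$ the same way gives an $I$-homotopy between $f_{\ast}g_{\ast}$ and $1_{\mathbf{hom}(A,Y)}$. Hence $f_{\ast}$ is an $I$-homotopy equivalence of quasi-categories, and therefore a categorical weak equivalence.

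The argument is essentially formal, so there is no serious obstacle; the only point requiring care is the bookkeeping that the transported homotopies have the correct endpoints, which is a routine adjunction computation. (One could alternatively note that $-\times A$ preserves monomorphisms and, by the corollary following Lemma \ref{lem 30}, trivial cofibrations, hence is left Quillen for the quasi-category model structure, so that its right adjoint $\mathbf{hom}(A,-)$ preserves weak equivalences between fibrant objects; but that route invokes the standard fact about right Quillen functors, whereas the $I$-homotopy argument above is self-contained.)
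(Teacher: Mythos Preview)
Your argument is correct. The transport of $I$-homotopies through the cartesian closed structure works exactly as you describe, and since $\mathbf{hom}(A,X)$ and $\mathbf{hom}(A,Y)$ are quasi-categories (Corollary \ref{cor 11}), the resulting $I$-homotopy equivalence is a categorical weak equivalence.

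The paper takes a different route: it factors $f = q \cdot j$ with $q$ a trivial fibration and $j$ a section of a trivial fibration, then observes that $\mathbf{hom}(A,-)$ preserves trivial fibrations (this is immediate from the lifting definition, since $- \times A$ preserves monomorphisms), hence preserves sections thereof. Your approach trades this factorization trick for a direct manipulation of the $I$-homotopy data, which is arguably more elementary and closer in spirit to the proof of Lemma \ref{lem 30}. The paper's approach, on the other hand, yields as a byproduct the statement that $\mathbf{hom}(A,-)$ preserves trivial fibrations, which is reused in the proof of Proposition \ref{prop 38}; your alternative parenthetical remark (via the corollary to Lemma \ref{lem 30} and Ken Brown's lemma) recovers essentially the same content. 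Either argument is fine here.
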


\begin{proof}
The object $\mathbf{hom}(A,X)$ is a quasi-category if $X$ is a
quasi-category, by Corollary \ref{cor 11}.

The map $f$ has a factorization $f = q \cdot j$, where $q$ is a
trivial fibration and $j$ is a section of a trivial fibration. The
functor $X \mapsto \mathbf{hom}(A,X)$ preserves trivial fibrations,
and therefore preserves categorical weak equivalences between
quasi-categories.
\end{proof}

\begin{proposition}\label{prop 38}
A map $f: X \to Y$ between quasi-categories is a categorical weak
equivalence if and only if it induces equivalences of groupoids
\begin{equation}\label{eq 7}
\begin{aligned}
&f_{\ast}: \pi J(\mathbf{hom}(\partial\Delta^{n},X)) \to 
\pi J(\mathbf{hom}(\partial\Delta^{n},Y))\ \text{and}\\
&f_{\ast}: \pi J(\mathbf{hom}(\Delta^{n},X)) \to 
\pi J(\mathbf{hom}(\Delta^{n},Y)) \\
\end{aligned}
\end{equation}
for $n \geq 1$.
\end{proposition}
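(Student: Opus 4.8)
The forward implication is essentially already in hand. If $f\colon X\to Y$ is a categorical weak equivalence of quasi-categories, then by Lemma \ref{lem 37} each map $\mathbf{hom}(K,X)\to\mathbf{hom}(K,Y)$ is a categorical weak equivalence of quasi-categories for $K=\Delta^n$ or $K=\partial\Delta^n$, and then Lemma \ref{lem 34}(3) says that $J(\mathbf{hom}(K,X))\to J(\mathbf{hom}(K,Y))$ is a standard weak equivalence of Kan complexes. Applying the fundamental-groupoid functor $\pi$ gives an equivalence of groupoids, since $\pi$ takes weak equivalences of Kan complexes to equivalences of groupoids. So (\ref{eq 7}) holds for all $n\ge 1$ (indeed for all $n\ge 0$).

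For the converse I would argue that the hypotheses force the maps $J(\mathbf{hom}(\Delta^n,X))\to J(\mathbf{hom}(\Delta^n,Y))$ (and the $\partial\Delta^n$ versions) to be genuine weak equivalences of Kan complexes, not merely $\pi_0$- and $\pi_1$-isomorphisms, and then bootstrap up. The key point is a Whitehead-type recognition: a map $g\colon U\to V$ of Kan complexes is a weak equivalence if and only if, for every $n\ge 0$, the induced map on function complexes $\mathbf{hom}(\Delta^n,U)\to\mathbf{hom}(\Delta^n,V)$ and $\mathbf{hom}(\partial\Delta^n,U)\to\mathbf{hom}(\partial\Delta^n,V)$ induces a bijection on $\pi_0$ and on each $\pi_1$ (with all base points) — because relative homotopy groups $\pi_n(V,U)$ are detected by lifting problems against $(\Delta^n,\partial\Delta^n)$, and an $(n-1)$-connected pair is $n$-connected precisely when the obstruction to filling such a square vanishes, which is a statement about $\pi_0$ and $\pi_1$ of the associated function complexes of the pair. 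So I would first prove, purely in simplicial homotopy theory, that if $g\colon U\to V$ is a map of Kan complexes such that $\pi\,\mathbf{hom}(\partial\Delta^m,U)\to\pi\,\mathbf{hom}(\partial\Delta^m,V)$ and $\pi\,\mathbf{hom}(\Delta^m,U)\to\pi\,\mathbf{hom}(\Delta^m,V)$ are equivalences of groupoids for all $m$, then $g$ is a weak equivalence. (This is exactly the content that Corollary \ref{cor 39}, promised in the introduction, will record.) Apply this with $U=J(\mathbf{hom}(\Delta^n,X))$, $V=J(\mathbf{hom}(\Delta^n,Y))$: the function complexes $\mathbf{hom}(\Delta^m,J(\mathbf{hom}(\Delta^n,X)))$ can be identified, using the remark after Lemma \ref{lem 19} and Corollary \ref{cor 23}, with cores of function complexes of $X$ on a product $\Delta^m\times\Delta^n$-type object, so that the hypothesis (\ref{eq 7}), applied across the appropriate indexing, feeds the recognition principle. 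The conclusion is that $f_\ast\colon J(\mathbf{hom}(K,X))\to J(\mathbf{hom}(K,Y))$ is a weak equivalence of Kan complexes for every $K=\Delta^n$ or $\partial\Delta^n$, $n\ge 0$; in particular (taking $K=\Delta^0$) $J(X)\to J(Y)$ is a weak equivalence, but more importantly we get this for all skeletal pieces simultaneously.

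From there I would conclude that $f$ is a categorical weak equivalence. Since $\tau_0(K,Z)=\pi_0 J(\mathbf{hom}(K,Z))$ and, by Proposition \ref{prop 25}, $\pi_I(K,Z)\cong\tau_0(K,Z)$ for quasi-categories $Z$, it is enough to show $f^\ast\colon\pi_I(Y,Z)\to\pi_I(X,Z)$ is a bijection for every quasi-category $Z$, i.e.\ $\pi_0 J(\mathbf{hom}(Y,Z))\to\pi_0 J(\mathbf{hom}(X,Z))$ is a bijection. Write a general simplicial set $W$ as a colimit of its skeleta, with each $\mathrm{sk}_n W$ obtained from $\mathrm{sk}_{n-1}W$ by attaching non-degenerate $n$-cells, so that $\mathbf{hom}(W,Z)$ is built as a (homotopy) inverse limit of a tower whose layers are pullbacks along maps $\mathbf{hom}(\Delta^n,Z)\to\mathbf{hom}(\partial\Delta^n,Z)$; these are pseudo-fibrations by Theorem \ref{thm 9}, so on cores they are Kan fibrations by Lemma \ref{lem 34}(1), and the tower for $X\to Y$ is levelwise (after applying $\mathbf{hom}(-,Z)$ to a cofibration $X\to Y$) — the point is that a categorical weak equivalence is detected on such towers because at each stage the relevant square of cores has a weakly equivalent map of fibers, by the previous paragraph applied to $Z$'s function complexes. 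Here the cleanest route is probably: first handle $X\to Y$ a cofibration (factor $f$ as a cofibration followed by a trivial fibration, and trivial fibrations are already categorical weak equivalences), then show the cofibration $X\to Y$ satisfying (\ref{eq 7}) has the left lifting property against all pseudo-fibrations between quasi-categories up to homotopy, equivalently that $\mathbf{hom}(Y,Z)\to\mathbf{hom}(X,Z)$ induces a surjection on $\pi_0$ of cores with trivial "relative $\pi_1$", by an obstruction-theoretic induction over the cells of $Y\setminus X$ using precisely the equivalences (\ref{eq 7}) and the fibration property of $J(\mathbf{hom}(\Delta^n,Z))\to J(\mathbf{hom}(\partial\Delta^n,Z))$.

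The main obstacle, I expect, is the recognition principle in the second paragraph — making precise and proving that equivalence of the groupoids $\pi J(\mathbf{hom}(\Delta^m,-))$ and $\pi J(\mathbf{hom}(\partial\Delta^m,-))$ for all $m$ detects weak equivalences of Kan complexes, in a base-point-free way. The heart of it is the observation that the $n$-th relative homotopy "sheet" of a map of Kan complexes is controlled by lifting problems for the pair $(\Delta^n,\partial\Delta^n)$, hence by $\pi_0$ and $\pi_1$ data of $\mathbf{hom}(\partial\Delta^n,-)\to\mathbf{hom}(\partial\Delta^{n-1},-)$-type maps; assembling these across $n$ with care about base points (which is why groupoids, rather than pointed homotopy groups, are the right bookkeeping device) is the delicate step. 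Everything else — Proposition \ref{prop 25}, the identification $\tau_0=\pi_0 J$, Lemma \ref{lem 34}, the skeletal/obstruction-theory induction — is standard once that principle is available.
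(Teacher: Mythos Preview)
Your forward implication is correct and matches the paper's argument exactly.

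The converse, however, has a genuine gap. You propose to apply a Kan-complex recognition principle (essentially Corollary~\ref{cor 39}) to the maps $J(\mathbf{hom}(\Delta^{n},X)) \to J(\mathbf{hom}(\Delta^{n},Y))$ in order to upgrade the groupoid equivalences to full weak equivalences of Kan complexes. But to apply that principle to a map $U \to V$ of Kan complexes you need equivalences $\pi\,\mathbf{hom}(\partial\Delta^{m},U) \to \pi\,\mathbf{hom}(\partial\Delta^{m},V)$ for \emph{all} $m$, and with $U = J(\mathbf{hom}(\Delta^{n},X))$ this requires information about objects like $\mathbf{hom}(\partial\Delta^{m},J(\mathbf{hom}(\Delta^{n},X)))$. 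Your proposed identification of these with cores of function complexes on ``$\Delta^{m}\times\Delta^{n}$-type objects'' is not correct (the invertibility conditions imposed are different), and in any case $\partial\Delta^{m}\times\Delta^{n}$ is not of the form $\Delta^{k}$ or $\partial\Delta^{k}$, so the hypothesis~(\ref{eq 7}) does not feed the principle. There is also a logical risk of circularity, since in the paper Corollary~\ref{cor 39} is deduced \emph{from} Proposition~\ref{prop 38}. Finally, your passage from ``$J(\mathbf{hom}(K,X)) \to J(\mathbf{hom}(K,Y))$ is a weak equivalence'' to ``$\pi_{I}(Y,Z) \to \pi_{I}(X,Z)$ is a bijection for all $Z$'' switches from the covariant to the contravariant variable without justification; the obstruction-theoretic induction you sketch over cells of $Y\setminus X$ would need control over $J(\mathbf{hom}(\Delta^{n},Z)) \to J(\mathbf{hom}(\partial\Delta^{n},Z))$ for arbitrary $Z$, which the hypothesis says nothing about.

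The paper's route is both simpler and avoids all of this: factor $f$ so as to assume it is a pseudo-fibration, and then show directly that every lifting problem $\partial\Delta^{n} \to X$ over $\Delta^{n} \to Y$ has a solution. The equivalence on $\pi J(\mathbf{hom}(\Delta^{n},-))$ gives a lift $\beta'$ of $\beta$ up to a quasi-isomorphism, which one then lifts using the path-lifting property of pseudo-fibrations. The equivalence on $\pi J(\mathbf{hom}(\partial\Delta^{n},-))$, together with Lemma~\ref{lem 34}, implies that the Kan fibration $J(\mathbf{hom}(\partial\Delta^{n},X)) \to J(\mathbf{hom}(\partial\Delta^{n},Y))$ has connected fibres, so $\alpha$ and $\beta'\cdot i$ are connected by a path in the fibre; a final homotopy-extension argument (Lemma~\ref{lem 22}) produces the strict lift. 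Note that only the $\pi_{0}$-level consequences of the groupoid equivalences are used---no upgrade to a full weak equivalence of cores is needed.
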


\begin{proof}
Suppose that $f: X \to Y$ is a quasi-weak equivalence of
quasi-categories. Then all induced maps
\begin{equation*}
\mathbf{hom}(A,X) \to \mathbf{hom}(A,Y)
\end{equation*}
are quasi-weak equivalences of quasi-categories by Lemma 
\ref{lem 37}. The induced maps
\begin{equation*}
J\mathbf{hom}(A,X) \to J\mathbf{hom}(A,Y)
\end{equation*}
are weak equivalences of Kan complexes by Lemma \ref{lem 34}, and
therefore induce equivalences of fundamental groupoids
\begin{equation*}
\pi J\mathbf{hom}(A,X) \to \pi J\mathbf{hom}(A,Y)
\end{equation*}
It follows that the maps of (\ref{eq 7}) are equivalences of groupoids.

For the converse, suppose that all morphisms (\ref{eq 7}) are weak
equivalences of groupoids.

It suffices to assume that $f$ is a pseudo-fibration, by the
usual fibration replacement trick for maps between Kan complexes: if
$f = p \cdot i$ where $i$ is a section of a trivial fibration and $p$
is a pseudo-fibration, then $f$ satisfies the conditions of
the Lemma if and only if $p$ does so.

Suppose, therefore, that $f$ is a pseudo-fibration. We want to
solve the lifting problem
\begin{equation*}
\xymatrix{
\partial\Delta^{n} \ar[r]^{\alpha} \ar[d]_{i} & X \ar[d]^{f} \\ 
\Delta^{n} \ar[r]_{\beta} \ar@{.>}[ur] & Y
}
\end{equation*}

The map
\begin{equation*}
f_{\ast}: \mathbf{hom}(\Delta^{n},X) \to \mathbf{hom}(\Delta^{n},Y)
\end{equation*}
is a pseudo-fibration by Lemma \ref{lem 30}. The map
\begin{equation*}
f_{\ast}: \pi J(\mathbf{hom}(\Delta^{n},X)) \to \pi J(\mathbf{hom}(\Delta^{n},Y))
\end{equation*}
is an equivalence of groupoids by assumption, so that there is a
quasi-isomor\-phism $\Delta^{1} \to \mathbf{hom}(\Delta^{n},Y)$ from the
vertex $\beta$ to $p(\gamma)$ for some $\gamma: \Delta^{n}
\to X$.

All pseudo-fibrations $p: Z \to W$ satisfy the path lifting property
\begin{equation*}
\xymatrix{
\{ \epsilon \} \ar[r] \ar[d] & Z \ar[d]^{p} \\
I \ar[r] \ar@{.>}[ur] & W
}
\end{equation*}
It follows that all pseudo-fibrations $p$ between quasi-categories satisfy a
lifting property
\begin{equation*}
\xymatrix{
\{ \epsilon \} \ar[r] \ar[d] & Z \ar[d]^{p} \\
\Delta^{1} \ar[r]_{\gamma} \ar@{.>}[ur]^{\theta} & W
}
\end{equation*}
for quasi-isomorphisms $\gamma$, since a quasi-isomorphism $\gamma$
extends to a morphism
\begin{equation*}
I \to J(W) \subset W.
\end{equation*}
We can further assume that the lifting $\theta$ is a quasi-isomorphism
of $Z$. 

It follows that there is a quasi-isomorphism $\Delta^{1} \to
\mathbf{hom}(\Delta^{n},X)$ from a pre-image $\beta'$ of $\beta$ to
$\gamma$, and in particular there is a lifting 
\begin{equation*}
\xymatrix{
& X \ar[d]^{f} \\
\Delta^{n} \ar[r]_{\beta} \ar[ur]^{\beta'} & Y 
}
\end{equation*}

The map
\begin{equation*}
f_{\ast}: J(\mathbf{hom}(\partial\Delta^{n},X)) \to 
J(\mathbf{hom}(\partial\Delta^{n},Y))
\end{equation*}
is a Kan fibration by Lemma \ref{lem 34}.
The vertices $\beta' \cdot i$ and $\alpha$ of the Kan complex
$J(\mathbf{hom}(\partial\Delta^{n},X))$ have the same image, namely
$\beta \cdot i$ under this fibration $f_{\ast}$, and are therefore in
the fibre $F_{\beta\cdot i}$ of $f_{\ast}$ over $\beta\cdot i$. The
fibration $f_{\ast}$ induces an equivalence of fundamental groupoids
by assumption, so that the fibre $F_{\beta\cdot i}$ is a connected Kan
complex. It follows that there is a path $h: \Delta^{1} \to F_{\beta
  \cdot i}$ from $\alpha$ to $\beta' \cdot i$, and so there is a map
$h'$ making the diagram
\begin{equation*}
\xymatrix{
\Delta^{1} \ar[r]^{h} \ar[d]_{\eta} & F_{\beta \cdot i} \\
I \ar[ur]_{h'}
}
\end{equation*}
since the map $\eta$ is a trivial cofibration in the standard model
structure for simplicial sets. Write $H$ for the adjoint of the composite
\begin{equation*}
I \xrightarrow{h'} F_{\beta \cdot i} \to J(\mathbf{hom}(\partial\Delta^{n},X).
\end{equation*}
Then the lifting problem
\begin{equation*}
\xymatrix{
(\Delta^{n} \times \{ 1 \}) \cup (\partial\Delta^{n} \times I) 
\ar[r]^-{(\beta',H)} \ar[d]  
& X \ar[d]^{f} \\
\Delta^{n} \times I \ar[r] \ar@{.>}[ur]^{\theta} & Y
}
\end{equation*}
can be solved since $f$ is a pseudo-fibration, by Lemma 
\ref{lem 22}. It follows that there is a commutative diagram
\begin{equation*}
\xymatrix{
\partial\Delta^{n} \ar[r]^{\alpha} \ar[d]_{i} & X \ar[d]^{f} \\ 
\Delta^{n} \ar[r]_{\beta} \ar[ur]^{\theta'} & Y
}
\end{equation*}
where $\theta'$ is the composite
\begin{equation*}
\Delta^{n} \times \{ 0 \} \subset \Delta^{n} \times I \xrightarrow{\theta} X.
\end{equation*}
\end{proof}

\begin{corollary}\label{cor 39}
A map $f: X \to Y$ between Kan complexes is a standard weak
equivalence of simplicial sets if and only if it induces equivalences
of groupoids
\begin{equation}\label{eq 8}
f_{\ast}: \pi(\mathbf{hom}(\partial\Delta^{n},X)) \to 
\pi(\mathbf{hom}(\partial\Delta^{n},Y))
\end{equation}
for $n \geq 1$.
\end{corollary}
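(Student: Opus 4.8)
The plan is to reduce to Proposition \ref{prop 38}, after disposing of the (easy) forward implication by hand.

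For the forward implication, suppose $f \colon X \to Y$ is a standard weak equivalence between Kan complexes. Write $f = q \cdot j$ with $q$ a standard trivial fibration and $j$ a section of a standard trivial fibration. The functor $\mathbf{hom}(A,-)$ preserves trivial fibrations, so $\mathbf{hom}(A,f) \colon \mathbf{hom}(A,X) \to \mathbf{hom}(A,Y)$ is a composite of a trivial fibration and a section of a trivial fibration, hence a standard weak equivalence, for every simplicial set $A$; moreover $\mathbf{hom}(A,X)$ and $\mathbf{hom}(A,Y)$ are Kan complexes since $X$ and $Y$ are. A weak equivalence of Kan complexes is a bijection on path components and an isomorphism on fundamental groups at every vertex, hence induces an equivalence of fundamental groupoids. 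Taking $A = \partial\Delta^{n}$ produces the equivalences (\ref{eq 8}).

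For the converse, suppose the maps (\ref{eq 8}) are equivalences of groupoids for all $n \geq 1$. Since $X$ and $Y$ are Kan complexes they are quasi-categories, and by Lemma \ref{lem 29} it suffices to show that $f$ is a categorical weak equivalence; for this I will verify the hypotheses of Proposition \ref{prop 38}. The key observation is that for a Kan complex $W$ the path category $P(W)$ is a groupoid by Lemma \ref{lem 4}, so every simplex of $W$ lies in $J(W)$, i.e. $J(W) = W$. Since $\mathbf{hom}(A,X)$ and $\mathbf{hom}(A,Y)$ are Kan complexes for each $A$, the first family of maps in (\ref{eq 7}) is precisely the family (\ref{eq 8}), an equivalence by hypothesis. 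For the second family, the inclusion of a vertex $\Delta^{0} \subset \Delta^{n}$ is a trivial cofibration in the standard model structure when $n \geq 1$, so restriction along it gives a standard trivial fibration $\mathbf{hom}(\Delta^{n},X) \to X$, and likewise for $Y$; hence the second family in (\ref{eq 7}) consists of equivalences if and only if $f_{\ast}\colon \pi(X) \to \pi(Y)$ is an equivalence. But $\mathbf{hom}(\partial\Delta^{1},X) = X \times X$ and $\pi(X \times X) = \pi(X) \times \pi(X)$, so the $n = 1$ instance of (\ref{eq 8}) says $\pi(X) \times \pi(X) \to \pi(Y) \times \pi(Y)$ is an equivalence, which forces $\pi(X) \to \pi(Y)$ to be one. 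Thus Proposition \ref{prop 38} applies, $f$ is a categorical weak equivalence, and therefore a standard weak equivalence by Lemma \ref{lem 29}.

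There is no serious obstacle here: the real work was done in Proposition \ref{prop 38}, and the content of the Corollary is just the recognition that, for Kan complexes, the core functor $J$ is the identity on the relevant function complexes and the $\Delta^{n}$-data collapses to the single demand that $\pi(X) \to \pi(Y)$ be an equivalence, already encoded in the $n=1$ case of (\ref{eq 8}). As indicated in the introduction, one can instead argue directly: replace $f$ by a Kan fibration and solve the lifting problems $\partial\Delta^{n} \to X$, $\Delta^{n} \to Y$ one at a time, using that the fibres of the Kan fibration $\mathbf{hom}(\partial\Delta^{n},X) \to \mathbf{hom}(\partial\Delta^{n},Y)$ are connected; this runs parallel to the proof of Proposition \ref{prop 38}, with $J$ absent throughout.
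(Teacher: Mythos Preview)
Your proof is correct and follows essentially the same route as the paper's: reduce the converse to Proposition \ref{prop 38} by noting $J(\mathbf{hom}(A,X)) = \mathbf{hom}(A,X)$ for Kan $X$, collapse the $\Delta^{n}$-family via the weak equivalence $\mathbf{hom}(\Delta^{n},X) \simeq X$, and extract $\pi(X) \to \pi(Y)$ from the $n=1$ case of (\ref{eq 8}) (the paper phrases this last step as a retract rather than a product, which is the same observation). You additionally spell out the forward implication, which the paper leaves implicit.
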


\begin{proof}
Suppose that all maps (\ref{eq 8}) are weak equivalences of
groupoids. Then the morphism $\pi(X) \to
\pi(Y)$ is an equivalence of fundamental groupoids, since it is a retract of the equivalence
\begin{equation*}
\pi\mathbf{hom}(\partial\Delta^{1},X) \to \pi\mathbf{hom}(\partial\Delta^{1},Y).
\end{equation*}
Any vertex $\ast \to
\Delta^{n}$ induces a natural weak equivalence
\begin{equation*}
\mathbf{hom}(\Delta^{n},X) \xrightarrow{\simeq} X
\end{equation*}
of Kan complexes, while there is an identification
\begin{equation*}
J(\mathbf{hom}(\Delta^{n},X)) = \mathbf{hom}(\Delta^{n},X).
\end{equation*}
It follows that all induced groupoid morphisms
\begin{equation*}
\pi J(\mathbf{hom}(\Delta^{n},X)) \to 
\pi J(\mathbf{hom}(\Delta^{n},Y))
\end{equation*}
are equivalences. 

It follows from Proposition \ref{prop 38} that the map $f: X \to Y$ is a
categorical weak equivalence. The Kan complexes $X$ and $Y$ are
quasi-categories, so Lemma \ref{lem 29} implies that $f$ is a
standard weak equivalence.
\end{proof}

Corollary \ref{cor 39} can also be proved directly, by using traditional
methods of simplicial homotopy theory.
\medskip

It follows from Lemma 4.13 of \cite{J40} that a map $p: X \to Y$
between quasi-categories is a pseudo-fibration if and only if it has
the right lifting property with respect to the maps (\ref{eq 2}) and
(\ref{eq 3}). 

The maps (\ref{eq 2}) induce isomorphisms of path
categories, and the map (\ref{eq 3}) induces an isomorphism of path
categories
\begin{equation*}
P((\partial\Delta^{n} \times \square^{m}) \cup (\Delta^{n} \times \sqcap^{m}_{(k,\epsilon)})) \xrightarrow{\cong} P(\Delta^{n} \times \square^{m})
\end{equation*}
if $m \geq 1$.  It follows that a functor $p: C \to D$ between small
categories induces a pseudo-fibration if and only if it has the right
lifting property with respect to all functors $\mathbf{n} \times \{
\epsilon \} \to \mathbf{n} \times \pi(\Delta^{1})$, where $\epsilon = 0,1$. 

It is then an exercise to show that the functor $p: C \to D$ defines a
pseudo-fibration $BC \to BD$ if and only if it has the {\it
  isomorphism lifting property} in the sense that all lifting problems
\begin{equation*}
\xymatrix{
\{ \epsilon \} \ar[r] \ar[d] & C \ar[d]^{p} \\
\mathbf{1} \ar[r]_{\alpha} \ar@{.>}[ur] & D
}
\end{equation*}
have solutions, where $\epsilon = 0,1$, and the morphism defined by the
functor $\alpha$ is an isomorphism of $D$.

If $p: C \to D$ has the isomorphism lifting property,
then the induced functors $p: C^{\mathbf{n}} \to D^{\mathbf{n}}$ have
the isomorphism lifting property, for all $n \geq 1$.

\begin{example}\label{ex 40}
Suppose that the functor $p: C \to D$ has the isomorphism lifting
property, and suppose that the induced map $p_{\ast}: BC \to BD$ of
quasi-categories satisfies the criteria for a categorical weak
equivalence that are given by Proposition \ref{prop 38}. These
criteria mean, precisely, 
that all induced functors 
\begin{equation}\label{eq 9}
\Iso(C^{\mathbf{n}}) \to
\Iso(D^{\mathbf{n}})
\end{equation}
 and
\begin{equation}\label{eq 10}
\Iso(C^{P(\partial\Delta^{n})}) \to \Iso(D^{P(\partial\Delta^{n})})
\end{equation} 
are equivalences of groupoids for $n \geq 0$. These functors are then trivial
fibrations of groupoids in the traditional sense, by Lemma 
\ref{lem 34}.

To solve the lifting problems 
\begin{equation*}
\xymatrix{
\partial\Delta^{n} \ar[r] \ar[d] & BC \ar[d]^{p_{\ast}} \\
\Delta^{n} \ar[r] \ar@{.>}[ur] & BD
}
\end{equation*}
it suffices to assume that $n \leq 2$, since the induced functors
$P(\partial\Delta^{n}) \to P(\Delta^{n})$ are isomorphisms for $n \geq 3$.

Every object $x$ of $D$ is isomorphic to the image $p(y)$ of some
object $y$ of $C$ since the map $\Iso(C) \to \Iso(D)$ is an equivalence of groupoids. We make a specific choice of isomorphism
$\alpha: x \xrightarrow{\cong} p(y)$ in $D$. The lifting problem
\begin{equation*}
\xymatrix{
\{ 1 \} \ar[r]^{y} \ar[d] & C \ar[d]^{p} \\
\mathbf{1} \ar[r]_{\alpha} \ar@{.>}[ur] & D
}
\end{equation*}
has a solution, so there is an object $z$ of $C$ such that $p(z) = x$.

Suppose given a lifting problem
\begin{equation*}
\xymatrix{
\partial\Delta^{2} \ar[r]^{\alpha} \ar[d] & BC \ar[d]^{p_{\ast}} \\
\Delta^{2} \ar[r]_{\beta} \ar@{.>}[ur] & BD
}
\end{equation*}
The map $\Iso(C^{\mathbf{2}}) \to
\Iso(D^{\mathbf{2}})$ is an equivalence of groupoids, so there is an
isomorphism $h: \mathbf{1} \to \Iso(D^{\mathbf{2}})$ from $\beta$ to a
functor $p(\gamma)$, for some $\gamma: \mathbf{2} \to C$. The
  isomorphism $h$ lifts along the pseudo-fibration $C^{\mathbf{2}} \to
  D^{\mathbf{2}}$ to an isomorphism $H: \omega \xrightarrow{\cong}
  \gamma$ in $C^{\mathbf{2}}$. The restriction
  $\omega\vert_{\partial\Delta^{2}}$ and $\alpha$ have the same image,
  namely $\beta\vert_{\partial\Delta^{2}}$ under $p$, and the map $p:
  \Iso(C^{P(\partial\Delta^{2})}) \to \Iso(D^{P(\partial\Delta^{2})})$
  is a trivial fibration of groupoids. It follows that there is an
  isomorphism $\omega\vert_{\partial\Delta^{2}} \to \alpha$ of
  $\Iso(C^{P(\partial\Delta^{2})})$ which maps to the identity of
  $\beta\vert_{\partial\Delta^{2}}$ under $p$. It follows that there
  is a (unique) functor $\zeta: \mathbf{2} \to C$ which maps to
  $\beta$ under $p$ and restricts to $\alpha$ on $\partial\Delta^{2}$.

The solution of the lifting problem
\begin{equation*}
\xymatrix{
\partial\Delta^{1} \ar[r]^{\alpha} \ar[d] & BC \ar[d]^{p_{\ast}} \\
\Delta^{1} \ar[r]_{\beta} \ar@{.>}[ur] & BD
}
\end{equation*}
is very similar.

The moral is that a functor $f: C
\to D$ is an equivalence of categories if and only if the functors
(\ref{eq 9}) and (\ref{eq 10}) are equivalences of groupoids for $0
\leq n \leq 2$.
\end{example}

\section{Products of simplices}

\begin{lemma}
Suppose that $\sigma: \Delta^{n} \to \Delta^{r} \times \Delta^{s}$ is
a non-degenerate simplex. Then $n \leq r+s$.
\end{lemma}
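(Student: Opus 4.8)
The plan is to use the identification $\Delta^{r}\times\Delta^{s} = B(\mathbf{r})\times B(\mathbf{s}) = B(\mathbf{r}\times\mathbf{s})$ from the proof of Lemma \ref{lem 2}, where $\mathbf{r}\times\mathbf{s}$ is the product poset $\{0<\dots<r\}\times\{0<\dots<s\}$ with the coordinatewise partial order. Under this identification a simplex $\sigma:\Delta^{n}\to\Delta^{r}\times\Delta^{s}$ is the same thing as a functor $\mathbf{n}\to\mathbf{r}\times\mathbf{s}$, and since the target is a poset this is precisely a chain
\begin{equation*}
(a_{0},b_{0}) \leq (a_{1},b_{1}) \leq \dots \leq (a_{n},b_{n})
\end{equation*}
of $n+1$ elements of $\mathbf{r}\times\mathbf{s}$.

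First I would record the characterization of non-degeneracy: $\sigma$ is non-degenerate if and only if this chain is strictly increasing, i.e. $(a_{i},b_{i})\neq(a_{i+1},b_{i+1})$ for $0\leq i<n$. This is the one place that needs a short argument — a simplex in the nerve of a category is degenerate exactly when one of its structure morphisms $(a_{i},b_{i})\to(a_{i+1},b_{i+1})$ is an identity, and in a poset a morphism is an identity precisely when its source equals its target.

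It then remains only to count. For each $i$ the strict inequality $(a_{i},b_{i})<(a_{i+1},b_{i+1})$ in the product order gives $a_{i}\leq a_{i+1}$, $b_{i}\leq b_{i+1}$, and $(a_{i+1}-a_{i})+(b_{i+1}-b_{i})\geq 1$. Summing over $i=0,\dots,n-1$ telescopes to
\begin{equation*}
n \leq \sum_{i=0}^{n-1}\bigl( (a_{i+1}-a_{i})+(b_{i+1}-b_{i}) \bigr) = (a_{n}-a_{0})+(b_{n}-b_{0}) \leq r+s,
\end{equation*}
since $0\leq a_{0}\leq a_{n}\leq r$ and $0\leq b_{0}\leq b_{n}\leq s$. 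There is no substantial obstacle in this proof; the only point requiring a moment's care is the second step, the identification of non-degenerate simplices with strictly increasing chains in the poset $\mathbf{r}\times\mathbf{s}$.
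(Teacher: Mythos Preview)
Your proof is correct and follows essentially the same approach as the paper: identify a non-degenerate $n$-simplex with a strictly increasing chain of length $n+1$ in the poset $\mathbf{r}\times\mathbf{s}$, and then bound $n$ by $r+s$. The paper carries out the bound by induction (peeling off the first vertex and landing in a smaller product $\mathbf{r}'\times\mathbf{s}$), whereas your telescoping sum reaches the same inequality directly.
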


\begin{proof}
Suppose that $\sigma$ is defined by the path
\begin{equation*}
(i_{0},j_{0}) \to (i_{1},j_{1}) \to \dots \to (i_{n},j_{n})
\end{equation*}
in the product poset $\mathbf{r} \times \mathbf{s}$. The simplex
$\sigma$ is non-degenerate, so there are no repeats in the
string. Thus, $i_{0} < i_{1}$ or $j_{0} < j_{1}$. 

If $i_{0} < i_{1}$
then the string
\begin{equation*}
d_{0}(\sigma):\ (i_{1},j_{1}) \to \dots \to (i_{n},j_{n})
\end{equation*} 
lies in a subobject of $\mathbf{r} \times \mathbf{s}$ isomorphic to a
poset $\mathbf{r}' \times \mathbf{s}$, where $r' < r$, and inductively
$d_{0}(\sigma)$ has length $L(d_{0}(\sigma))$ bounded above by
$r'+s$. Thus, $\sigma$ has length 
\begin{equation*}
n = 1 + L(d_{0}(\sigma)) \leq 1 + (r'+s) \leq r+s.
\end{equation*}
The same outcome obtains (with a similar argument) if $j_{0} < j_{1}$.
\end{proof}

\begin{corollary}
The non-degenerate simplices $\sigma: \Delta^{n} \to \Delta^{r} \times
\Delta^{s}$ of maximal dimension have dimension $n=r+s$.
\end{corollary}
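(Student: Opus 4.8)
The plan is to combine the upper bound supplied by the preceding Lemma with an explicit construction that realizes it. That Lemma shows that every non-degenerate simplex $\sigma : \Delta^{n} \to \Delta^{r} \times \Delta^{s}$ satisfies $n \leq r+s$; since a simplicial set has non-degenerate simplices only in dimensions where it is non-trivial, the claim will follow once we exhibit a single non-degenerate simplex of dimension exactly $r+s$.

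First I would recall that $\Delta^{r} \times \Delta^{s} = B(\mathbf{r} \times \mathbf{s})$, so that an $n$-simplex of $\Delta^{r} \times \Delta^{s}$ is a functor $\mathbf{n} \to \mathbf{r} \times \mathbf{s}$, i.e.\ a monotone path
\begin{equation*}
(i_{0},j_{0}) \leq (i_{1},j_{1}) \leq \dots \leq (i_{n},j_{n})
\end{equation*}
in the product poset, and that such a simplex is non-degenerate precisely when all of these inequalities are strict, equivalently when the path visits $n+1$ distinct vertices. I would then take the maximal staircase path from $(0,0)$ to $(r,s)$, for instance
\begin{equation*}
(0,0) \to (1,0) \to \dots \to (r,0) \to (r,1) \to \dots \to (r,s),
\end{equation*}
or indeed any monotone path from $(0,0)$ to $(r,s)$ in which each step increases exactly one of the two coordinates by $1$. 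Such a path has $r+s+1$ vertices and hence defines a simplex $\Delta^{r+s} \to \Delta^{r} \times \Delta^{s}$.

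Next I would verify non-degeneracy: every step of this path strictly increases the sum $i_{k}+j_{k}$ of the two coordinates, so all the vertices are distinct and the simplex is non-degenerate of dimension $r+s$. Combined with the bound $n \leq r+s$ from the Lemma, this shows that $r+s$ is the maximal dimension in which non-degenerate simplices occur, which is the assertion.

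There is essentially no obstacle here; the only point requiring (routine) care is the identification of non-degenerate simplices of $B(\mathbf{r}\times\mathbf{s})$ with strictly increasing paths in $\mathbf{r}\times\mathbf{s}$, after which both the upper bound (already proved) and the existence of a realizing staircase are immediate.
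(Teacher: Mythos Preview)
Your argument is correct: the upper bound $n \leq r+s$ is the preceding Lemma, and your explicit staircase path
\[
(0,0) \to (1,0) \to \dots \to (r,0) \to (r,1) \to \dots \to (r,s)
\]
is a non-degenerate $(r+s)$-simplex realizing the bound. The paper states this result as a corollary without proof, treating it as immediate from the Lemma; your write-up simply makes the obvious existence step explicit, and indeed the very path you chose is singled out a few lines later in the paper as the maximal element in the ordering on top-dimensional non-degenerate simplices.
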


Suppose that 
\begin{equation}\label{eq 11}
\sigma: (0,0) = (i_{0},j_{0}) \to \dots \to (i_{n},j_{n}) = (r,s) 
\end{equation}
 is a non-degenerate simplex of maximal dimension $n=r+s$.
Then 
$i_{k+1} \leq i_{k}+1$ for $k < r$, for otherwise, there is a
non-degenerate path
\begin{equation*}
(i_{k},j_{k}) \to (i_{k}+1,j_{k}) \to (i_{k+1},j_{k+1})
\end{equation*}
having the path $(i_{k},j_{k}) \to (i_{k+1},j_{k+1})$ as a face, and
$\sigma$ does not have maximal length. Similarly (or dually), $j_{k+1}
\leq j_{k}+1$ for $k < r$. Observe also that 
\begin{itemize}
\item[a)]
if $i_{k+1}=i_{k}$ then
$j_{k} < j_{k+1}$ so that $j_{k}+1 = j_{k+1}$, and
\item[b)] (dually) if $j_{k+1}
=j_{k}$ then $i_{k+1}=i_{k}+1$.
\end{itemize}

Suppose that
\begin{equation*}
\gamma: (0,0) = (i'_{0},j'_{0}) \to \dots \to (i'_{n},j'_{n}) = (r,s)
\end{equation*}
is another simplex of maximal dimension $n=r+s$ in
$\Delta^{r} \times \Delta^{s}$. Say that $\sigma \leq \gamma$ if
$i_{k} \leq i'_{k}$ for $0 \leq k \leq n$. 

\begin{lemma}
For the ordering $\sigma \leq \gamma$ on the set of non-degenerate
simplices of $\Delta^{r} \times \Delta^{s}$ of maximal dimension, the
simplex
\begin{equation*}
(0,0) \to (0,1) \to \dots \to (0,s) \to (1,s) \to \dots \to (r,s)
\end{equation*}
is minimal, and the simplex
\begin{equation*}
(0,0) \to (1,0) \to \dots \to (r,0) \to (r,1) \to \dots \to (r,s)
\end{equation*}
is maximal. 
\end{lemma}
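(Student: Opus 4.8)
The plan is to exploit the explicit combinatorial description of the maximal-dimension non-degenerate simplices of $\Delta^{r}\times\Delta^{s}$ already set up in this section, namely as maximal chains
\[
(0,0)=(i_{0},j_{0})\to(i_{1},j_{1})\to\dots\to(i_{n},j_{n})=(r,s),\qquad n=r+s,
\]
in the product poset $\mathbf{r}\times\mathbf{s}$. First I would record the sharpening that, for such a chain, exactly one coordinate strictly increases at each step, and it increases by exactly $1$. Indeed, each step raises the quantity $i+j$ by at least $1$ (consecutive vertices of a non-degenerate simplex are distinct, and both coordinates are non-decreasing) and, by the inequalities $i_{k+1}\le i_{k}+1$ and $j_{k+1}\le j_{k}+1$ established above, by at most $2$; since the total increase of $i+j$ from $(0,0)$ to $(r,s)$ is $r+s=n$ and there are exactly $n$ steps, every step must raise $i+j$ by exactly $1$. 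In particular $i_{k}+j_{k}=k$ for all $k$.

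Next I would derive, from $i_{k}+j_{k}=k$ together with the endpoint constraints $0\le i_{k}\le r$ and $0\le j_{k}\le s$, the two-sided bound
\[
\max(0,\,k-s)\ \le\ i_{k}\ \le\ \min(k,\,r)\qquad(0\le k\le n),
\]
valid for every maximal-dimension simplex: the lower bound is $i_{k}=k-j_{k}\ge k-s$ combined with $i_{k}\ge 0$, and the upper bound is $i_{k}=k-j_{k}\le k$ combined with $i_{k}\le r$.

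Then I would simply read off the first-coordinate sequences of the two displayed simplices and compare. For
$\sigma_{\min}\colon(0,0)\to(0,1)\to\dots\to(0,s)\to(1,s)\to\dots\to(r,s)$ one has $i_{k}=0$ for $k\le s$ and $i_{k}=k-s$ for $k\ge s$, i.e.\ $i_{k}=\max(0,k-s)$; for
$\sigma_{\max}\colon(0,0)\to(1,0)\to\dots\to(r,0)\to(r,1)\to\dots\to(r,s)$ one has $i_{k}=k$ for $k\le r$ and $i_{k}=r$ for $k\ge r$, i.e.\ $i_{k}=\min(k,r)$. Hence the bound above says precisely that every maximal-dimension simplex $\gamma$ satisfies $\sigma_{\min}\le\gamma\le\sigma_{\max}$ in the ordering $\sigma\le\gamma\iff i_{k}\le i'_{k}$ for all $k$. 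In particular $\sigma_{\min}$ is the least element and $\sigma_{\max}$ the greatest, so they are respectively minimal and maximal.

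I do not expect a real obstacle here. The one point that deserves care is the observation that exactly one coordinate increases, by exactly $1$, at each step: this is what forces $i_{k}+j_{k}=k$ and thereby makes the bound tight at precisely the two chains in the statement. The identification of the coordinate sequences of $\sigma_{\min}$ and $\sigma_{\max}$ is routine bookkeeping.
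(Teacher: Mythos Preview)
Your proof is correct and follows essentially the same approach as the paper: both establish the identity $i_{k}+j_{k}=k$ for maximal-dimension simplices and use it, together with the bounds $0\le i_{k}\le r$ and $0\le j_{k}\le s$, to sandwich every $i_{k}$ between the first-coordinate sequences of the two displayed simplices. Your presentation via the single two-sided bound $\max(0,k-s)\le i_{k}\le\min(k,r)$ is slightly more symmetric than the paper's, which treats the maximal and minimal cases separately, but the underlying argument is the same.
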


\begin{proof}
If $\sigma$ is a path as (\ref{eq 11}), then there are relations
\begin{equation*}
0=i_{0} \leq i_{1} \leq \dots \leq i_{r}
\end{equation*}
and $i_{k+1} \leq i_{k}+1$.  It follows that $0 \leq i_{j} \leq j$ for $0
\leq j \leq r$. Also, $i_{j} \leq r$ for all $j$, and hence for
all $j > r$. It follows that the indicated simplex is maximal.  

The relation
\begin{equation*}
i_{k} + j_{k} = k,
\end{equation*}
holds for all non-degenerate simplices of maximal dimension --- this
is a consequence of the observations in statements a) and b)
above. Thus, for $0 \leq k \leq r$, $i_{s+k} + j_{s+k} = s+k$ and
$j_{s+k} \leq s$ together force $i_{s+k} = (s-j_{s+k}) + k \geq k$.
The assertion that the indicated simplex is minimal follows.
\end{proof}

Suppose that $\sigma$ is a non-degenerate $(r+s)$-simplex such that
the path defining $\sigma$ contains a segment
\begin{equation*}
\xymatrix{
& (i_{k}+1,j_{k}+1) \\
(i_{k},j_{k}) \ar[r] & 
(i_{k}+1,j_{k}) \ar[u] 
}
\end{equation*}
Then the path $\sigma'$ that is obtained from $\sigma$ by replacing
the segment above by the path
\begin{equation}\label{eq 12}
\xymatrix{
(i_{k},j_{k}+1) \ar[r] & (i_{k}+1,j_{k}+1) \\
(i_{k},j_{k}) \ar[u]
}
\end{equation}
satisfies $\sigma' \leq \sigma$. 

Observe that $d_{k+1}(\sigma) = d_{k+1}(\sigma')$, and that $\sigma$
and $\sigma'$ are the only two non-degenerate $(r+s)$-simplices for
which $d_{k+1}(\sigma)$ could be a face. In particular, if $\tau <
\sigma'$ then $d_{k+1}(\sigma)$ is not a face of $\tau$.

\begin{lemma}\label{lem 44}
Suppose that $\sigma$ is a non-degenerate $(r+s)$-simplex of
$\Delta^{r} \times \Delta^{s}$.
\begin{itemize}
\item[1)]
If $\sigma$ is not maximal, then it
contains a segment
\begin{equation*}
\xymatrix{
(i_{k},j_{k}+1) \ar[r] & (i_{k}+1,j_{k}+1) \\
(i_{k},j_{k}) \ar[u]
}
\end{equation*}
\item[2)]
If $\sigma$ is not minimal, then it contains a segment
\begin{equation*}
\xymatrix{
& (i_{r}+1,j_{r}+1) \\
(i_{r},j_{r}) \ar[r] & (i_{r}+1,j_{r}) \ar[u]
}
\end{equation*}
\end{itemize}
\end{lemma}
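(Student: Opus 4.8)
The plan is to encode a top-dimensional non-degenerate simplex as a word in two letters and then dispose of both statements by a one-line contradiction.

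Since $r+s$ is the largest dimension of a non-degenerate simplex of $\Delta^{r}\times\Delta^{s}$, the simplex $\sigma$ in the statement is automatically of maximal dimension, so the structure developed just above applies: $\sigma$ is a path
\[
(0,0)=(i_{0},j_{0})\to\dots\to(i_{n},j_{n})=(r,s),\qquad n=r+s,
\]
and the identity $i_{k}+j_{k}=k$ recorded above, combined with the weak monotonicity of the path in the product poset, forces each step $(i_{k},j_{k})\to(i_{k+1},j_{k+1})$ to increase exactly one of the two coordinates, by exactly $1$: either a \emph{right step}, with $i_{k+1}=i_{k}+1$ and $j_{k+1}=j_{k}$, or an \emph{up step}, with $i_{k+1}=i_{k}$ and $j_{k+1}=j_{k}+1$. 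Thus $\sigma$ is determined by the word $w(\sigma)$ of length $n$ in the two letters $R,U$ recording its successive steps, and $w(\sigma)$ has exactly $r$ letters $R$ (the first coordinate must climb from $0$ to $r$) and exactly $s$ letters $U$. A segment of the form displayed in statement~1) is precisely an occurrence of the factor $UR$ in $w(\sigma)$, and a segment of the form displayed in statement~2) is precisely an occurrence of the factor $RU$; in the notation of the statement these factors sit at two consecutive steps $k+1,k+2$, so the subscript $r$ in part~2) should read $k$.

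For statement~1), suppose $\sigma$ is not maximal and, for a contradiction, that $w(\sigma)$ contains no factor $UR$. Then $w(\sigma)=R^{a}U^{b}$, and counting the letters gives $a=r$, $b=s$; but $R^{r}U^{s}$ is exactly the word of the maximal simplex $(0,0)\to(1,0)\to\dots\to(r,0)\to(r,1)\to\dots\to(r,s)$ identified in the lemma above, a contradiction. Statement~2) is the mirror image: if $\sigma$ is not minimal and $w(\sigma)$ has no factor $RU$, then $w(\sigma)=U^{b}R^{a}$ with $b=s$ and $a=r$, which is the word of the minimal simplex $(0,0)\to(0,1)\to\dots\to(0,s)\to(1,s)\to\dots\to(r,s)$, again a contradiction.

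I do not expect a genuine obstacle. The only point requiring care is the reduction to words --- that in a top-dimensional non-degenerate path each step moves exactly one coordinate, by exactly $1$ --- but that is exactly what the identity $i_{k}+j_{k}=k$ and the observations a), b) recorded immediately before the lemma give, so I would simply cite them. Beyond that it is just bookkeeping with the indices, in particular reading the subscript in part~2) as $k$ rather than $r$.
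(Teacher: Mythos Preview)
Your argument is correct, and it is genuinely different from the paper's. The paper proves statement~1) by induction on $r+s$: in the base case $r=s=1$ there are only two top simplices and the non-maximal one visibly has the required segment; in the inductive step one locates the first time the path hits the top row $j=s$ (or, failing that, the right column $i=r$) and either reads off the segment directly or passes to a smaller product $\Delta^{r}\times\Delta^{j_{p}}$ to invoke the inductive hypothesis. Statement~2) is declared dual.

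Your approach replaces this induction with the word encoding $w(\sigma)\in\{R,U\}^{r+s}$ and the elementary observation that a word with no $UR$ factor must be $R^{r}U^{s}$, hence maximal. This is shorter and arguably cleaner; it also makes the duality between 1) and 2) completely transparent (swap $R\leftrightarrow U$). The paper's approach, on the other hand, stays closer to the geometry of the path in the poset $\mathbf{r}\times\mathbf{s}$ and does not require introducing the word formalism, which is not used elsewhere. Your remark that the subscript $r$ in part~2) should be read as a fresh index $k$ is well taken; it is an evident typo in the statement.
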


\begin{proof}
We prove statement 1).  The proof of
statement 2) is similar.

We argue by induction on $r+s \geq 2$.

If $r=s=1$ there are two non-degenerate $2$-simplices in
$\Delta^{1} \times \Delta^{1}$ and the one that is not maximal has
the form (\ref{eq 12}). 

Suppose that $i_{k}$ is minimal such that $(i_{k},j_{k}) =
(i_{k},s)$. If $i_{k} < r$, then $\sigma$ has a segment
\begin{equation*}
\xymatrix{
(i_{k},r) \ar[r] & (i_{k}+1,r) \\
(i_{k},r-1) \ar[u]
}
\end{equation*}

Suppose that $i_{k}=r$. Choose the minimal $i_{p}$ such that
$(i_{p},j_{p}) = (r,j_{p})$. Then $i_{p} > 0$ since $\sigma$ is not
maximal, and the segment of $\sigma$ ending at $(r,j_{p}))$ has the form
\begin{equation*}
(0,0) \to \dots \to (r-1,j_{p}) \to (r,j_{p}).
\end{equation*}
This segment defines a maximal non-degenerate simplex of $\Delta^{r}
\times \Delta^{j_{p}}$, which is susceptible to the argument of the
first paragraph. This simplex therefore has a segment of the required
form, as does the simplex $\sigma$.
\end{proof}

An {\it interior simplex} of $\Delta^{r} \times \Delta^{s}$ is a poset morphism
\begin{equation*}
\mathbf{m} \xrightarrow{\theta} \mathbf{r} \times \mathbf{s}
\end{equation*}
such that the composites
\begin{equation*}
\begin{aligned}
&\mathbf{m} \xrightarrow{\theta} \mathbf{r} \times \mathbf{s} \to \mathbf{r} \\
&\mathbf{m} \xrightarrow{\theta} \mathbf{r} \times \mathbf{s} \to \mathbf{s}
\end{aligned}
\end{equation*}
with the respective projections are surjective. Such a simplex
$\theta$ cannot lie in the boundary subcomplex
\begin{equation*}
(\partial\Delta^{r} \times \Delta^{s}) \cup 
(\Delta^{r} \times \partial\Delta^{s}),
\end{equation*}
for then one of the two composites above would fail to be surjective.

\begin{theorem}\label{th 45}
Suppose that $0 < k < n$. Then the inclusion
\begin{equation*}
(\Lambda^{n}_{k} \times \Delta^{m}) \cup (\Delta^{n} \times \partial\Delta^{m})
\subset \Delta^{n} \times \Delta^{m}
\end{equation*}
is inner anodyne.
\end{theorem}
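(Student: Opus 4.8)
The plan is to realize the inclusion as a finite composite of pushouts of the inner anodyne inclusions supplied by Lemma \ref{lem 8}, by filtering $\Delta^n\times\Delta^m$ through an increasing chain of subcomplexes starting at $Y_0 := (\Lambda^n_k\times\Delta^m)\cup(\Delta^n\times\partial\Delta^m)$ and adjoining non-degenerate simplices in a carefully chosen order.

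First I would pin down the non-degenerate simplices of $\Delta^n\times\Delta^m$ that are not already in $Y_0$. A non-degenerate simplex $(\theta,\phi)\colon\mathbf{p}\to\mathbf{n}\times\mathbf{m}$ lies in $\Delta^n\times\partial\Delta^m$ iff $\phi$ is not surjective, and lies in $\Lambda^n_k\times\Delta^m$ iff $\theta$ omits some vertex other than $k$; so the simplices to be adjoined --- call them \emph{relevant} --- are exactly those with $\phi$ surjective and with image of $\theta$ equal to $\mathbf{n}$ or to $\mathbf{n}\setminus\{k\}$. A monotone path that is surjective in both coordinates must begin at $(0,0)$ and end at $(n,m)$; hence for a relevant simplex $\rho$ the faces $d_0\rho$ and $d_{\dim\rho}\rho$ each drop a coordinate value equal to $0$ or $n$ in the first factor, or $0$ or $m$ in the second, and since $0$ and $n$ are both $\neq k$ these two faces always lie in $Y_0$. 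This is exactly where the hypothesis $0<k<n$ is used, and it is precisely the condition $d^0,d^N\in S$ needed to apply Lemma \ref{lem 8} to $\rho$.

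Next I would choose the order of attachment. Every relevant simplex of dimension $<n+m$ is a face of a relevant simplex of maximal dimension $n+m$ (refine the path; a maximal path is automatically surjective in both factors), so once every maximal relevant simplex has been adjoined the filtration exhausts $\Delta^n\times\Delta^m$. I would therefore run through the maximal-dimension simplices along a linear extension of the partial order $\sigma\leq\gamma$ of the previous section, starting at the minimal one. When a maximal simplex $\sigma$ is reached, its codimension-one faces fall into three classes: those lying in $Y_0$ (their relevant coordinate projection drops a value $\neq k$); the relevant ones that, by the corner-flipping description preceding Lemma \ref{lem 44}, are shared with the unique smaller maximal simplex obtained by flipping a right-then-up corner to an up-then-right one (hence already present); and the relevant ones shared with a unique larger maximal simplex, together with the relevant ``shortcut'' faces obtained by dropping the central vertex of a horizontal or vertical triple through first coordinate $k$, for which $\sigma$ is the $\leq$-least maximal simplex admitting them as a face. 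The attachment of $\sigma$ then consists of a short sequence of pushouts: first adjoin each not-yet-present relevant face of $\sigma$ --- each an instance of Lemma \ref{lem 8}, since its own extreme faces lie in $Y_0$ --- and then adjoin $\sigma$ itself along $\langle S\rangle\subset\Delta^{n+m}$, where $S$ is the set of codimension-one faces of $\sigma$ present at that moment. Lemma \ref{lem 44} guarantees that a non-minimal maximal simplex has the required corners, so that $S$ is always a \emph{proper} subset (some face is always still missing, for instance a shortcut face carried by no smaller maximal simplex), and Lemma \ref{lem 8} then makes each step inner anodyne; the composite inclusion $Y_0\subset\Delta^n\times\Delta^m$ is therefore inner anodyne.

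The main obstacle is the combinatorial bookkeeping in this last step: one must verify that at the instant $\sigma$ is attached the intersection $\sigma\cap(\text{built so far})$ is exactly the subcomplex generated by $S$ --- that no codimension-$\geq 2$ face of $\sigma$ has crept in except as a face of a present codimension-one face --- and that, across all maximal simplices, each relevant lower-dimensional simplex is adjoined exactly once, always as a new face of the first maximal simplex carrying it, so that the hypotheses of Lemma \ref{lem 8} (properness of $S$ and presence of the extreme faces) are met every time. This is where the notion of an interior simplex of $\Delta^n\times\Delta^m$ (which can never sit inside $(\partial\Delta^n\times\Delta^m)\cup(\Delta^n\times\partial\Delta^m)$, so that the boundary faces are exactly accounted for) and the precise corner-flip analysis of $\leq$ in Lemma \ref{lem 44} do all the work. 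The individual invocations of Lemma \ref{lem 8} are routine; organizing the order of attachment and checking these intersections is the delicate part, and it is this portion that in the literature is handled by a somewhat intricate explicit enumeration of the staircase simplices --- the same device used in the proofs of Lemma \ref{lem 19} and Lemma \ref{lem 22} above, now carried out in full generality.
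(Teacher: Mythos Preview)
Your overall strategy is the same as the paper's: linearly order the non-degenerate $(n+m)$-simplices by the partial order of Section~4, attach them one at a time to $Y_0=(\Lambda^n_k\times\Delta^m)\cup(\Delta^n\times\partial\Delta^m)$, and invoke Lemma~\ref{lem 8} at each stage. Your observation that $d_0\sigma$ and $d_{n+m}\sigma$ always lie in $Y_0$ (because $0<k<n$) is exactly the paper's starting point.

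There is, however, a genuine inconsistency in your attachment step. You propose, for each maximal $\sigma$, to ``first adjoin each not-yet-present relevant face of $\sigma$'' and \emph{then} attach $\sigma$ along $\langle S\rangle\subset\Delta^{n+m}$, where $S$ is the set of codimension-one faces present at that moment. But if you have just adjoined every missing codimension-one face, then $S$ is the full set and Lemma~\ref{lem 8} does not apply. Your subsequent assertion that ``$S$ is always a proper subset (some face is always still missing)'' directly contradicts the preceding instruction. You also do not justify that each such face $\tau$ can itself be attached via Lemma~\ref{lem 8}: knowing that $d_0\tau,d_{\dim\tau}\tau\in Y_0$ is not enough --- you would need to identify $\tau\cap(\text{built so far})$ as $\langle S'\rangle$ for a \emph{proper} $S'$, and you give no mechanism for this.

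The paper avoids this entirely: it attaches \emph{only} the $(n+m)$-simplices and never pre-adjoins faces. The real work is the claim you flag as the ``main obstacle'' but do not carry out: that at the moment $\sigma$ is attached, the intersection $K=\sigma\cap(\Delta^n\times\Delta^m)^{(T)}$ is already of the form $\langle S\rangle$ for a set $S$ of $(n+m-1)$-faces. The paper proves this by showing every maximal non-degenerate simplex of $K$ has dimension $n+m-1$, via a case analysis using Lemma~\ref{lem 44}. It then exhibits a missing face: for non-maximal $\sigma$ an interior face $d_{r+1}\sigma$ shared only with a strictly larger simplex; for the maximal $\sigma$ the face $d_k\sigma$, which projects to $d^k\notin\Lambda^n_k$, is surjective onto $\mathbf{m}$, and contains the vertex $(n,0)$ so cannot lie in any smaller $(n+m)$-simplex. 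Your sketch does not treat the maximal $\sigma$ separately, and this case is where the index $k$ finally enters.
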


\begin{proof}
Suppose that $T$ is a set of non-degenerate $(m+n)$-simplices of
$\Delta^{m} \times \Delta^{n}$ that is order closed in the sense that
if $\gamma \in T$ and $\tau \leq \gamma$ then $\tau \in T$.

Given $T$ pick a smallest $\sigma$ such that $\sigma \notin T$. Then
the collection of all simplices $\tau$ such that $\tau < \sigma$ is
contained in $T$ and the set $T' = T \cup \{ \sigma \}$ is order
closed. The empty set $\emptyset$ of non-degenerate $(m+n)$-simplices
is the minimal order-closed set, and the full set of non-degenerate
$(m+n)$-simplices is the maximal order-closed set. 

Write 
\begin{equation*}
(\Delta^{n} \times \Delta^{m})^{(T)} 
\end{equation*}
for the subcomplex of $\Delta^{m} \times \Delta^{n}$ that is generated by the subcomplex
\begin{equation*}
(\Delta^{n} \times \Delta^{m})^{(\emptyset)} = (\Lambda^{n}_{k} \times \Delta^{m}) \cup (\Delta^{n} \times \partial\Delta^{m})
\end{equation*}
and the simplices in $T$.

The idea of the proof is to show that all inclusions
\begin{equation*}
(\Delta^{n} \times \Delta^{m})^{(T)}  \subset 
(\Delta^{n} \times \Delta^{m})^{(T')} 
\end{equation*}
are inner anodyne for order closed sets $T$ and $T' = T \cup \{ \sigma
\}$ as above.

Every non-degenerate $(m+n)$-simplex $\sigma$ has $d_{0}(\sigma)$ and
$d_{m+n}(\sigma)$ in the subcomplex $(\Delta^{n} \times
\Delta^{m})^{(\emptyset)}$: 
\begin{itemize}
\item[1)]
If the first member in the path
\begin{equation*}
\sigma:\ (0,0) = (i_{0},j_{0}) \to \dots \to (i_{m+n},j_{m+n}) = (n,m)
\end{equation*}
is the morphism $(0,0) \to (1,0)$ then $d_{0}(\sigma)$ is in the image
of the poset morphism $d^{0} \times 1: (\mathbf{n-1}) \times \mathbf{m}
\to \mathbf{n} \times \mathbf{m}$, which is in $\Lambda^{n}_{k} \times
\Delta^{m}$ since $k \ne 0$.  If the first member of the path $\sigma$
is the morphism $(0,0) \to (0,1)$ then $d_{0}(\sigma)$ is in the image
of the morphism $1 \times d^{0}: \mathbf{n} \times (\mathbf{m-1}) \to \mathbf{n} \times \mathbf{m}$,
which is in $\Delta^{n} \times \partial\Delta^{m}$.
\item[2)]
If the last morphism in the path $\sigma$ is the morphism $(n-1,m) \to
(n,m)$ then $d_{n+m}(\sigma)$ is in the image of the poset morphism
$d^{n} \times 1: (\mathbf{n-1}) \times \mathbf{m} \to \mathbf{n} \times
\mathbf{m}$, which is in $\Lambda^{n}_{k} \times \Delta^{m}$ since $k
\ne n$. If the last morphism of $\sigma$ is $(n,m-1) \to (n,m)$ then
$d_{n+m}(\sigma)$ is in the image of the morphism $1 \times d^{m}:
\mathbf{n} \times (\mathbf{m-1}) \to \mathbf{n} \times \mathbf{m}$, which
is in $\Delta^{n} \times \partial\Delta^{m}$.
\end{itemize}

There is a pushout diagram
\begin{equation*}
\xymatrix{
K \ar[r] \ar[d] & (\Delta^{n} \times \Delta^{m})^{(T)} \ar[d]^{i} \\
\Delta^{m+n} \ar[r]_-{\sigma} & (\Delta^{n} \times \Delta^{m})^{(T')}
}
\end{equation*}
where 
\begin{equation*}
K = \Delta^{m+n} \cap (\Delta^{n} \times \Delta^{m})^{(T)}
\end{equation*}
in $\Delta^{n} \times \Delta^{m}$. 

I claim that  the maximal non-degenerate
simplices of $K$ have dimension $m+n-1$ so that $K= \langle S \rangle$
some set $S$ of non-degenerate $(m+n-1)$-simplices of $\Delta^{m+n}$. 

To see this, let
$\gamma$ be such a maximal non-degenerate simplex of $K$, and write
\begin{equation*}
(\gamma(0),\gamma'(0)) \to \dots \to (\gamma(r),\gamma'(r))
\end{equation*}
for the string defining $\gamma$. 

If $(\gamma(0),\gamma'(0)) \ne
(0,0)$ then $\gamma$ is contained in the image of one of the morphisms
$d^{0} \times 1$ or $1 \times d^{0}$, and the maximum length of
non-degenerate simplices in these images in $m+n-1$. It follows that
$\gamma = d_{0}(\sigma)$. 
Thus, we can assume (for otherwise $\gamma$ is a face of a simplex of dimension $m+n-1$ of $K$) that $(\gamma(0),\gamma'(0)) = (0,0)$. We can
similarly assume that $(\gamma(r),\gamma'(r))= (m,n)$.

Suppose that the morphism
\begin{equation*}
(\gamma(i),\gamma'(i)) \to (\gamma(i+1),\gamma'(i+1))
\end{equation*}
is one of the morphisms in the string defining $\gamma$. This morphism
is the composite of the segment of morphisms
\begin{equation*}
\tau_{i}:\ (i_{1},j_{1}) \to \dots \to (i_{k},j_{k})
\end{equation*}
appearing  in  the  string   defining  $\sigma$.  This  segment  is  a
non-degenerate  simplex  of $\Delta^{r}  \times  \Delta^{s}$ for  some
$r,s$. 
If this string $\tau_{i}$ is not minimal among all such simplices then it
contains a substring of the form
\begin{equation*}
\xymatrix{
& (i_{v}+1,j_{v}+1) \\
(i_{v},j_{v}) \ar[r] & (i_{v}+1,j_{v}) \ar[u] 
}
\end{equation*}
by Lemma \ref{lem 44}.
It follows that $\gamma$ is a face of $d_{v+1}(\sigma)$,
which is a face of $\sigma$ as well as a face of some (unique)
$\sigma' < \sigma$, so $\gamma = d_{v+1}(\sigma)$ is an
$(n+m-1)$-simplex of $K$.

We can therefore assume that all of the strings $\tau_{i}$ are
minimal.  It follows that the string $\sigma$ is minimal among strings
of length $n+m$ passing through all of the points
$(\gamma(i),\gamma'(i))$. It follows that
$\gamma$ is a face of some simplex in $(\Delta^{n} \times
\Delta^{m})^{(\emptyset)}$, and hence that $\gamma$ is not interior.

If the composite
\begin{equation*}
\mathbf{r} \xrightarrow{\gamma} \mathbf{n} \times \mathbf{m} \to \mathbf{n}
\end{equation*}
is not surjective, then there is an $i$ for which $\gamma(i) + 1 <
\gamma(i+1)$. This means that the corresponding segment of $\sigma$ has the form
\begin{equation*}
\xymatrix@C=10pt{
(\gamma(i),\gamma'(i)) \ar[r] & \dots \ar[r] &  (\gamma(i),\gamma'(i)+s) \ar[d]  \\
&& (\gamma(i)+1,\gamma'(i)+s) \ar[r] & \dots \ar[r] 
& (\gamma(i)+r,\gamma'(i)+s)
}
\end{equation*}
where $r>1$. But then $d_{v}(\sigma)$ is in the image of some $d^{j}
\times 1: (\mathbf{n-1}) \times \mathbf{m} \to \mathbf{n} \times
\mathbf{m}$ (for some $v$ determined by the point
$(\gamma(i)+1,\gamma'(i)+s)$ in the string above), so that $\gamma =
d_{v}(\sigma)$ has dimension $m+n-1$. A similar argument shows that
$\gamma$ has dimension $m+n-1$ if the other composite is not
surjective.
\medskip

With all of that in hand, suppose that $\sigma$ is not
maximal. Then $\sigma$ contains a segment
\begin{equation*}
\xymatrix{
(i_{r},j_{r}+1) \ar[r] & (i_{r}+1,j_{r}+1) \\
(i_{r},j_{r}) \ar[u] & \\
}
\end{equation*}
and the face $d_{r+1}(\sigma)$ is not in any smaller non-degenerate
$(n+m)$-simplex and is interior. There is a pushout diagram
\begin{equation}\label{eq 13}
\xymatrix{
\langle S \rangle \ar[r] \ar[d] 
& (\Delta^{n} \times \Delta^{m})^{(T)} \ar[d]^{i} \\
\Delta^{n+m} \ar[r]_-{\sigma} & (\Delta^{n} \times \Delta^{m})^{(T')}
}
\end{equation}
where $S$ is a set of non-degenerate $(n+m-1)$-simplices of
$\Delta^{n+m}$ that includes the simplices $d^{0}$ and $d^{n+m}$ but
is missing $d^{r+1}$. It follows from Lemma \ref{lem 8} that the morphism
$i$ is inner anodyne.

If $\sigma$ is maximal, there is still a pushout diagram of the form
(\ref{eq 13}), and the faces $d^{0}$ and $d^{m+n}$ are still in the set
$S$, but $d^{k} = d_{k}(\sigma)$ is not a member of $S$. In effect,
$d_{k}(\sigma) \mapsto d^{k}$ under the projection $\Delta^{n} \times
\Delta^{m} \to \Delta^{m}$ so it is not in $\Lambda^{n}_{k} \times
\Delta^{n}$, and the composite
\begin{equation*}
\mathbf{n+m-1} \xrightarrow{d_{k}(\sigma)} \mathbf{n} \times \mathbf{m} 
\to \mathbf{m}
\end{equation*}
is surjective, so that $d_{k}(\sigma)$ is not in $\Delta^{n} \times
\partial\Delta^{m}$. Finally, $d_{k}(\sigma)$ is not a face of any
smaller non-degenerate $(m+n)$-simplices because it contains the
vertex $(n,0)$, so that $d_{k}(\sigma)$ is not a member of $S$.  The
map $i$ is therefore inner anodyne.
\end{proof}


\nocite{pathcat}
\nocite{Lurie-HTT}
\nocite{Joyal-quasi-cat}

\bibliographystyle{plain} 
\bibliography{spt}

\begin{thebibliography}{1}

\bibitem{Cisinski}
Denis-Charles Cisinski.
\newblock Les pr\'efaisceaux comme mod\`eles des types d'homotopie.
\newblock {\em Ast\'erisque}, 308:xxiv+390, 2006.

\bibitem{J40}
J.~F. Jardine.
\newblock Categorical homotopy theory.
\newblock {\em Homology, Homotopy Appl.}, 8(1):71--144 (electronic), 2006.

\bibitem{pathcat}
J.~F. Jardine.
\newblock Path categories and resolutions.
\newblock {\em Homology Homotopy Appl.}, 12(2):231--244, 2010.

\bibitem{complexity}
J.F. Jardine.
\newblock {Complexity reduction for path categories}.
\newblock Preprint, \url{http://uwo.ca/math/faculty/jardine/preprints/}, 2016.

\bibitem{Joyal-quasi-Kan}
A.~Joyal.
\newblock Quasi-categories and {K}an complexes.
\newblock {\em J. Pure Appl. Algebra}, 175(1-3):207--222, 2002.
\newblock Special volume celebrating the 70th birthday of Professor Max Kelly.

\bibitem{Joyal-quasi-cat}
A.~Joyal.
\newblock Notes on quasi-categories.
\newblock Preprint, \url{http://ncatlab.org/nlab/show/Andre+Joyal}, 2008.

\bibitem{Lurie-HTT}
Jacob Lurie.
\newblock {\em Higher topos theory}, volume 170 of {\em Annals of Mathematics
  Studies}.
\newblock Princeton University Press, Princeton, NJ, 2009.

\bibitem{Meadows-01}
Nicholas~J. Meadows.
\newblock The local {J}oyal model structure.
\newblock {\em Theory Appl. Categ.}, 31:Paper No. 24, 690--711, 2016.

\bibitem{Meadows-thesis}
Nicholas~J. Meadows.
\newblock Local higher category theory.
\newblock Thesis, University of Western Ontario,
  \url{https://ir.lib.uwo.ca/etd/5483/}, 2018.

\end{thebibliography}

\end{document}